\documentclass[11pt,a4paper, reqno]{amsart}
 \usepackage{amsaddr}
\usepackage[a4paper, left = 2cm,  hmargin={25mm,25mm},vmargin={25mm,25mm}]{geometry}
\usepackage[utf8]{inputenc}
\usepackage{geometry}
\usepackage[english]{babel}
\usepackage{graphicx}
\usepackage{color}
\usepackage{xcolor}
\usepackage{pdfpages}
\usepackage{amsbsy}
\usepackage{amssymb}
\usepackage{amsmath}
\usepackage{bm}
\usepackage{mathtools}
\usepackage{matlab-prettifier}
\usepackage{amsthm}
\usepackage{empheq}
\usepackage{tikz-cd}
\usepackage{amsfonts}
\usepackage{array}
\usepackage{hyperref}
\usepackage[OT2,T1]{fontenc}
\usepackage{csquotes}
\usepackage{mathabx}
\usepackage{bm}
\usepackage{bigints}

\newcommand\m[1]{\begin{pmatrix}#1\end{pmatrix}} 

\DeclareSymbolFont{cyrletters}{OT2}{wncyr}{m}{n}
\DeclareMathSymbol{\Sha}{\mathalpha}{cyrletters}{"58}
\newcolumntype{P}[1]{>{\centering\arraybackslash}p{#1}}
\everymath{\displaystyle}
\newtheorem{theorem}{Theorem}[section]

\newtheorem{lemma}[theorem]{Lemma}

\newtheorem{defn}[theorem]{Definition}

\newtheorem{proposition}[theorem]{Proposition}

\newtheorem{remark}[theorem]{Remark}

\newtheorem*{notation}{Notation}
\newtheorem{corollary}[theorem]{Corollary}

\sloppy
\definecolor{lightgray}{gray}{0.5}
\setlength{\parindent}{0pt}

\newenvironment{myprop}[1]
  {\innercustomprop}
  {\endinnercustomprop}

\newenvironment{mythm}[1]
  {\innercustomthm}
  {\endinnercustomthm}

\usepackage[backend=biber]{biblatex}
\addbibresource{reference.bib}

\begin{document}
\title{Analytic Properties of an orthogonal Fourier-Jacobi Dirichlet Series}
\vspace{-0.5cm}
\author{Rafail Psyroukis}
\address{Department of Mathematical Sciences, Durham University, 
South. Rd.\\ Durham, DH1 3LE, U.K..}
\email{rafail.psyroukis@durham.ac.uk}
\subjclass[2020]{11F03, 11F60, 11F66}
\maketitle
\vspace{-1cm}
\begin{abstract}
    We investigate the analytic properties of a Dirichlet series involving the Fourier-Jacobi coefficients of two cusp forms for orthogonal groups of signature $(2,n+2)$. Using an orthogonal Eisenstein series of Klingen type, we obtain an integral representation for this Dirichlet series. In the case when the corresponding lattice has only one $1$-dimensional cusp, we rewrite this Eisenstein series in the form of an Epstein zeta function. If additionally $4 \mid n$, we deduce a theta correspondence between this Eisenstein series and a Siegel Eisenstein series for the symplectic group of degree $2$. We obtain, in this way, the meromorphic continuation of the Dirichlet series to $\mathbb{C}$ as a corollary. In the case of the $E_8$ lattice, we are able to further deduce a precise functional equation for the Dirichlet series.\\\\
    \textbf{Keywords:} orthogonal modular forms, Fourier-Jacobi forms, Dirichlet series, theta correspondence, differential operators.
\end{abstract}
\vspace{-0.5cm}
\renewcommand{\abstractname}{Acknowledgements}
\begin{abstract}
This work was supported by the Additional Funding Programme for Mathematical Sciences, delivered by EPSRC (EP/V521917/1) and the Heilbronn Institute for Mathematical Research, as well as by a scholarship from the Onassis Foundation. The author would also like to thank Prof. Thanasis Bouganis for his continuous guidance and support.
\end{abstract}
\section{Introduction}
Kohnen and Skoruppa in \cite{kohnen_skoruppa} considered the Dirichlet series
\begin{equation}\label{kohnen-skoruppa}
    D_{F,G}(s) = \zeta(2s-2k+4)\sum_{N=1}^{\infty} \langle \phi_N, \psi_N \rangle N^{-s},
\end{equation}
where $\phi_N, \psi_N$ are the $N$th Fourier-Jacobi coefficients of two degree two Siegel cusp forms $F,G$ of integral weight $k\geq0$ and $\langle \;, \;\rangle$ denotes the Petersson inner product on the space of Jacobi forms of index $N$ and weight $k$. This can be formally seen as the analogue of the Rankin-Selberg convolution series for elliptic modular forms. They prove two main results: Firstly, this series admits a meromorphic continuation to $\mathbb{C}$ and satisfies a functional equation. Secondly, when $F$ is a Hecke eigenform and $G$ is in the Maass space, this series is proportional to the spinor $L$-function attached to $F$.\\\\
We wish to analyse their first result in greater detail. The method for their proof can be summarised in two steps: the first (and easier) is to obtain an integral representation for $D_{F,G}$ using a non-holomorphic Eisenstein series of Klingen type, and the second is to prove the meromorphic continuation and functional equation for this Eisenstein series. The proof of that involves writing the Eisenstein series in the form of an Epstein zeta function and then proving that it is a Mellin transform of a specific theta series.\\\\
This general method of proof has been successful in a number of other cases as well. For example, Raghavan and Sengupta in \cite{raghavan} and Gritsenko in \cite{gritsenko} considered the same problem, but in the case when $F,G$ are Hermitian cusp forms of degree $2$ over $\mathbb{Q}(i)$. In both papers, the authors managed to deduce the analytic properties of (a slightly differently normalised) $D_{F,G}$ by applying a very similar idea; however, there are two key differences in the second step above. The first one is that the Klingen Eisenstein series arises as the inner product of a theta series and a classical Eisenstein series for $\textup{SL}_2$. The second and more important one is that it is now necessary to apply some differential operators to the theta series first. The reason for this is that there are terms that cause the inner-product integral to diverge, so we need to eliminate them with the use of differential operators.\\\\
It should be noted here that the degree of the modular objects considered is not important. Yamazaki in \cite{yamazaki} generalised the result of Kohnen and Skoruppa for Siegel cusp forms of arbitrary degree $n \geq 1$. However, his method for deducing the analytic properties of the Eisenstein series of Klingen type is derived from the general Langlands' theory. Krieg, on the other hand, in \cite{krieg1991}, used theta correspondence in the arbitrary degree $n$ case for Siegel, Hermitian (over $\mathbb{Q}(i)$) and quaternionic (over the Hamiltonian quaternions) cusp forms. The use of differential operators was again essential for the last two cases. What is even more interesting is that Deitmar and Krieg in \cite{deitmar_krieg} managed to prove theta correspondence of Eisenstein series for the groups $\textup{Sp}_n(\mathbb{Z})$ and $\textup{O}(m,m)$, for arbitrary $m,n \geq 1$. Their proof is based on proving the existence of an invariant differential operator $R$, which, when applied to the suitable theta series, eliminates the terms that cause the divergence of the inner-product integral. This was a major advance because, up until then, all operators had to be found explicitly, and this could be done in only a handful of cases.\\\\
In this paper, we consider a non-normalised Dirichlet series $\mathcal{D}_{F,G}$ of the same type as \eqref{kohnen-skoruppa} for two cusp forms $F,G$ of orthogonal groups of real signature $(2,n+2), \textup{ }n\geq 1$. Our purpose is to deduce its analytic continuation and functional equation in this case. The question of whether this series can be related to $L$-functions has been addressed in our previous paper \cite{psyroukis_orthogonal}. Our motivation is the accidental isogenies of orthogonal groups of signature $(2,n+2)$ with classical groups for small $n$ (e.g. $\textup{SO}(2,3)$ is isogenous to $\textup{Sp}_2$ and $\textup{SO}(2,4)$ is isogenous to $\textup{U}(2,2)$). Therefore, the results of Kohnen and Skoruppa in \cite{kohnen_skoruppa} and of Gritsenko in \cite{gritsenko} already indicate a connection in the $n=1,2$ case. In principle, we use the same general method of proof as all the aforementioned papers (except Yamazaki in \cite{yamazaki}). In our situation, we essentially obtain a theta-correspondence between Eisenstein series for $\textup{Sp}_2$ and $\textup{SO}(2,n+2)$. \\\\
Let us now give a brief description of our results. The setup is as follows: We start with an even positive definite symmetric 
matrix $S$ of rank $n \geq 1$ so that the lattice $L:=\mathbb{Z}^{n}$ is $\mathbb{Z}$-maximal. We then set
\begin{equation*}
    S_0 := \m{&&1\\ &-S&\\ 1&&}, \textup{ } S_1 := \m{&&1\\ &S_0&\\ 1&&}
\end{equation*}
of real signature $(1,n+1)$ and $(2,n+2)$ respectively. If now $K$ is a field containing $\mathbb{Q}$, we define the corresponding special orthogonal groups of $K$-rational points via
\begin{equation*}
    G^{*}_{K} := \{g \in \textup{SL}_{n+2}(K) \mid g^{t}S_0g = S_0\}, \textup{ }G_{K} := \{g \in \textup{SL}_{n+4}(K) \mid g^{t}S_1g = S_1\}.
\end{equation*}
It turns out that $G_{\mathbb{R}}^{0}$, the connected component of the identity, acts on a suitable tube domain, which we will call $\mathcal{H}_{S} \subset \mathbb{C}^{n+2}$, and an appropriate factor of automorphy $j(\gamma, Z)$ can be defined. Let us now denote by $\Gamma_{S}$ the intersection of $G_{\mathbb{R}}^{0}$ with the stabiliser of the lattice $L_1 := \mathbb{Z}^{n+4}$ and let $F : \mathcal{H}_S \longrightarrow \mathbb{C}$ be an orthogonal cusp form of weight $k$ with respect to $\Gamma_{S}$ (see Definitions \ref{modular_form_defn} and \ref{fourier coefficients}).\\\\
If $Z = (\omega, z, \tau) \in \mathcal{H}_{S}$, with $\omega, \tau \in \mathbb{H}$ and $z \in \mathbb{C}^{n}$, we can write
\begin{equation*}
    F(Z) = \sum_{m \geq 1}\phi_m(\tau, z)e^{2\pi i m \omega},
\end{equation*}
and we call the functions $\phi_m(\tau, z)$ the Fourier-Jacobi coefficients of $F$. Now, if $G$ is another such orthogonal cusp form with corresponding Fourier-Jacobi coefficients $\psi_{m}$, the object of interest is
\begin{equation*}
    \mathcal{D}_{F,G}(s) := \sum_{m\geq 1}\langle \phi_{m},\psi_{m}\rangle m^{-s},
\end{equation*}
where $\langle \;,\; \rangle$ is a suitable inner product defined on the space of Fourier-Jacobi forms of certain weight and (lattice) index (see Definition \ref{inner product_fourier-jacobi}). This series converges for $\textup{Re}(s)>k+1$ (see Lemma \ref{convergence_dirichlet}).\\\\
In order to apply the method we mentioned above, for $W \in \mathcal{H}_S$ and $\textup{Re}(s)>n+1$, let $E(W,s)$ denote an Eisenstein series of Klingen type in this setting (see Definition \ref{eisentein_defn}). The first step, namely the integral representation of the Dirichlet series, is then Proposition \ref{integral_representation_dirichlet}.
\begin{myprop}{4.4}
    For $W \in \mathcal{H}_S$ and $s \in \mathbb{C}$ with $\textup{Re}(s) > n+2$, we have
    \begin{equation*}
        \langle F(W)E(W,s), G(W)\rangle = \frac{1}{\#\textup{SO}(S;\mathbb{Z})}(4\pi)^{-(s+k-n-1)} \Gamma(s+k-n-1) \mathcal{D}_{F,G}(s+k-n-1),
    \end{equation*}
    where $\langle \textup{ }, \textup{ }\rangle$ denotes the inner product of Definition \ref{inner_product_orthogonal} and
    \begin{equation*}
        \textup{SO}(S;\mathbb{Z}):= \{g \in \textup{SL}_{n}(\mathbb{Z}) \mid g^{t}Sg = S\},
    \end{equation*}
    which is a finite group.
\end{myprop}
For our next step, namely the theta-correspondence of Eisenstein series, we need several preparations. The first one is to rewrite the Eisenstein series in a form similar to an Epstein zeta function. We are able to do this only in the case when $\Gamma_S$ has one $1$-dimensional cusp (see Definition \ref{one_dimensional_cusps}). In this case, we have Proposition \ref{main_proposition_eisenstein}.
\begin{myprop}{5.3}
    Let $S$ be such that $\#\mathcal{C}^{1}(\Gamma_S) = 1$. Then, for each $W \in \mathcal{H}_S$, there is a well-defined $R_W$ in the space of majorants of $S_1$ (see Definition \ref{majorants_defn}) such that
    \begin{equation*}
        E(W,s) = \sum_{\ell \in X/\textup{GL}_{2}(\mathbb{Z})} \left(\det(R_{W}[\ell])\right)^{-s/2},
\end{equation*}
where  $X = \left\{\m{l&m} \mid l,m \in \mathbb{Z}^{n+4}, \m{l&m} \textup{ primitive}, S_1\left[\m{l&m}\right]=0\right\}$.
\end{myprop}
Let now $Z \in \mathbb{H}_2$ (Siegel's upper half plane, see Section \ref{theta_series_section}) and $W \in \mathcal{H}_S$. We define the theta series $\Theta(Z,W)$ as in Proposition \ref{theta_definition}. This is a real analytic function that transforms as a degree $2$ Siegel modular form of weight $k=-n/2$ and a (specified) character $\chi$, under the action of the congruence subgroup $\Gamma_0^{(2)}(q)$ of $\textup{Sp}_2(\mathbb{Z})$ (see Proposition \ref{richter}). Here, $q$ denotes the level of $S$ (see Definition \ref{level}).\\\\
The next step is to consider the inner product of this theta series with a Siegel Eisenstein series for $\textup{Sp}_{2}$ and hopefully obtain $E(W,s)$. However, we cannot do this directly because the theta series contains terms that would cause the inner product integral against a hypothetical appropriate symplectic Eisenstein series to diverge. These are the summands that do not have full rank. It is therefore necessary to apply suitable differential operators in order to eliminate those terms.\\\\
It turns out that the $\textup{Sp}_2(\mathbb{R})$-invariant differential operator $R$, defined in \cite{deitmar_krieg}, can be applied in this case too. However, $\Theta$ is not invariant under $\textup{Sp}_{2}$ (or more accurately under its congruence subgroup $\Gamma_0^{(2)}(q)$) and therefore, applying $R$ to it would not be helpful. For this reason, we first apply the so-called Maass-Shimura operator $\delta_{k}^{(r)}$ (see \eqref{shimura operator}). This, when applied to a modular function, increases its weight by $2r$. As $\Theta$ transforms as a weight $-n/2$ modular form, we have to pick $r=n/4$, and therefore it is necessary to impose the condition $4 \mid n$. After taking all these factors into account, we obtain the main Theorem \ref{main theorem}.
\begin{mythm}{8.2}
    Let $\widetilde{E}(Z,\chi,s)$ denote the symplectic Siegel Eisenstein series, as in Definition \ref{siegel_eisenstein}. Let $S$ have rank $n$, with $4 \mid n$ and be such that $\#\mathcal{C}^{1}(\Gamma_S) = 1$. Let also $k=-n/2$ and $r=n/4$, as before. We then have for $\textup{Re}(s)>n+1$
    \begin{equation*}
        \left\langle \widetilde{E}(Z, \chi, (s+1)/2-r), R[\delta_{k}^{(r)}\Theta](Z,W)\right \rangle_{\Gamma_{0}^{(2)}(q)} = \xi(s)\xi(s-1)\gamma_S(s)E(W,s),
    \end{equation*}
    where $\gamma_S(s)$ is an explicit gamma factor, $\xi(s)$ is the completed zeta function and $\langle \textup{ }, \textup{ }\rangle_{\Gamma_0^{(2)}(q)}$ denotes the symplectic inner product of Definition \ref{inner_product_congruence}.
\end{mythm}
The meromorphic continuation of $\mathcal{D}_{F,G}$ to $\mathbb{C}$ follows then as a corollary (see Corollary \ref{corollary}).\\\\
As an application, when $S$ corresponds to the $E_8$ lattice, we are able to deduce a precise functional equation for the Dirichlet series (see Theorem \ref{e_8}). We choose this specific lattice because it satisfies the conditions of our main Theorem \ref{main theorem} and is the unique unimodular even lattice with only one $1$-dimensional cusp. Hence, in this case, the symplectic Eisenstein series corresponds to the full $\textup{Sp}_2(\mathbb{Z})$ and therefore a functional equation for it is available.\\\\ 
The structure of the paper is as follows: In Sections \ref{preliminaries} and \ref{fourier-jacobi forms}, we give the definitions of the main objects of consideration, i.e. modular forms, Fourier-Jacobi forms and the Dirichlet series of interest. In Section \ref{integral_representation}, we obtain the integral representation of the Dirichlet series, and in Section \ref{eisenstein_series}, we rewrite the Eisenstein series of Klingen type in the form of an Epstein zeta function. In Section \ref{theta_series_section}, we define the appropriate theta series for our case, and Section \ref{diffrential_operators} is devoted to discussing the differential operators needed in order to obtain the theta-correspondence. This is done in Section \ref{theta-correspondence}. Finally, in Section \ref{e_8-section}, we obtain a functional equation in the case of the $E_8$ lattice.
\begin{notation}
    We denote the space of $m\times n$ matrices with coefficients in a ring $R$ with $\textup{Mat}_{m,n}(R)$. If $n=m$, we often use the notation $\textup{Mat}_{n}(R)$. For any matrix $M \in \textup{Mat}_{n}(R)$, we denote by $\det(M), \textup{ tr}(M)$ the determinant and trace of $M$ respectively. By $\textup{GL}_n(R)$, we denote the matrices in $M_{n}(R)$ with non-zero determinant. By $1_n$, we denote the $n \times n$ identity matrix. For any vector $v$, we denote by $v^{t}$ its transpose. We also use the bracket notation $A[B] := \overline{B}^{t}AB$ for suitably sized complex matrices $A,B$. For a complex number $z$, we denote by $e(z) := e^{2\pi i z}$. Finally, let $\zeta(s)$ denote the Riemann zeta function, $\xi(s):=\pi^{-s/2}\Gamma(s/2)\zeta(s)$ denote the completed zeta function and $\Gamma(s):= \int_{0}^{\infty}t^{s-1}e^{-t}\textup{d}t$ denote the Gamma function.
\end{notation}
\section{Preliminaries}\label{preliminaries}
Let $V$ denote a finite-dimensional vector space over $\mathbb{Q}$. We start with the following Definition.
\begin{defn}
    A $\mathbb{Z}$-lattice $\Lambda$ is a free, finitely generated $\mathbb{Z}$-module, which spans $V$ over $\mathbb{Q}$.
\end{defn}
In the following, let $V := \mathbb{Q}^{n}$ and $L := \mathbb{Z}^{n}$ with $n \geq 1$. Assume $S$ is an even integral positive definite symmetric matrix of rank $n$. Here, even means $S[x] \in 2\mathbb{Z}$ for all $x \in L$. Let also $Q := S/2$. We define
\begin{equation*}
    S_0 := \m{&&1\\ &-S&\\1&&}, \ S_1 := \m{&&1\\&S_0&\\1&&}
\end{equation*}
of real signatures $(1,n+1)$ and $(2,n+2)$ respectively and denote by $Q_i := S_i/2$ for $i=0,1$.
Let also $V_0:= \mathbb{Q}^{n+2}$ and $V_1 := \mathbb{Q}^{n+4}$ and consider the quadratic spaces $(V_0, \phi_0), \ (V_1, \phi_1)$, where
\begin{align*}
    \phi_i : V_i \times V_i &\longmapsto \mathbb{Q}\\
    (x,y) &\longmapsto \frac{1}{2}x^{t}S_iy,
\end{align*}
for $i= 0, 1$. 
We then have that $\phi:=\phi_{0} \mid_{V\times V}$ is just $(x, y) \longmapsto -x^{t}Sy/2$, and we make the assumption that $L = \mathbb{Z}^{n}$ is a maximal $\mathbb{Z}$-lattice with respect to $\phi$. Here, maximal means that the lattice $L$ is maximal among all $\mathbb{Z}$-lattices in $V$ on which $\phi[x]$ takes values in $\mathbb{Z}$, where we use the notation $\phi[x]:=\phi(x,x)$ for $x \in V$.\\

From \cite[Lemma 6.3]{shimura2004arithmetic}, we then obtain that $L_0:= \mathbb{Z}^{n+2}$ is a $\mathbb{Z}$-maximal lattice in $V_0$.\\

If now $K \supset \mathbb{Q}$ is a field, we define the corresponding special orthogonal groups of $K$-rational points via
\begin{equation*}
    G^{*}_{K} := \{g \in \textup{SL}_{n+2}(K) \mid g^{t}S_0g = S_0\},
\end{equation*}
\begin{equation*}
    G_{K} := \{g \in \textup{SL}_{n+4}(K) \mid g^{t}S_1g = S_1\}.
\end{equation*}
We view $G^{*}_{K}$ as a subgroup of $G_{K}$ via the embedding $g \longmapsto \m{1&&\\&g&\\ &&1}$.\\
We also use the notation $\textup{SO}^{\phi_0}(V_0)$ and in general $\textup{SO}^{\phi}(V)$ for any quadratic space $(V,\phi)$.\\\\
Let now $\mathcal{H}_{S}$ denote one of the connected components of
\begin{equation*}
    \{Z \in V_0 \otimes_{\mathbb{Q}} \mathbb{C} \mid \phi_0[\textup{Im}Z] > 0\}.
\end{equation*}
In particular, if we denote by $\mathcal{P}_{S} := \{y'=(y_1,y,y_2) \in \mathbb{R}^{n+2} \mid y_1 >0, \ \phi_0[y']>0\}$, we choose:
\begin{equation*}
    \mathcal{H}_S = \{z=u+iv \in V_0 \otimes_{\mathbb{R}} \mathbb{C} \mid v \in \mathcal{P}_S\}.
\end{equation*}
For a matrix $g \in \textup{Mat}_{n+4}(\mathbb{R})$, we write it as
\begin{equation*}
    g = \m{\alpha & a^t &\beta \\ b&A&c\\ \gamma&d^t&\delta},
\end{equation*}
with $A \in \textup{Mat}_{n+2}(\mathbb{R}), \alpha,\beta,\gamma,\delta \in \mathbb{R}$ and $a,b,c,d$ real column vectors. Now the map
\begin{equation}\label{action}
    Z \longmapsto g\langle Z\rangle = \frac{-\frac{1}{2}S_0[Z]b+AZ+c}{-\frac{1}{2}S_0[Z]\gamma+d^tZ+\delta}
\end{equation}
gives a well-defined transitive action of $G_{\mathbb{R}}^{0}$ on $\mathcal{H_{S}}$, where $G_{\mathbb{R}}^{0}$ denotes the identity component of $G_{\mathbb{R}}$. The denominator of the above expression is the factor of automorphy
\begin{equation*}
    j(\gamma, Z) := -\frac{1}{2}S_0[Z]\gamma+d^tZ+\delta.
\end{equation*}
Let now $L_1 := \mathbb{Z}^{n+4}$ and define the groups
\begin{equation*}
    \Gamma(L_0):=\{g \in G_{\mathbb{Q}}^{*} \mid gL_0=L_0\}, 
\end{equation*}
\begin{equation*}
\Gamma(L_1) := \{g \in G_{\mathbb{Q}} \mid gL_1 = L_1\}.
\end{equation*}
Let also $\Gamma^{+}(L_0) := \Gamma(L_0) \cap G^{*,0}_{\mathbb{R}}$. If then
\begin{equation*}
    \Gamma_{S} := G_{\mathbb{R}}^{0} \cap \Gamma(L_1),
\end{equation*}
we have the following definition:
\begin{defn}\label{modular_form_defn}
Let $k \in \mathbb{Z}$. A holomorphic function $F : \mathcal{H}_S \longrightarrow \mathbb{C}$ is called a modular form of weight $k$ with respect to the group $\Gamma_S$ if it satisfies the equation
\begin{equation*}
    (F|_k \gamma)(Z) := j(\gamma, Z)^{-k}F(\gamma\langle Z\rangle) = F(Z)
\end{equation*}
for all $\gamma \in \Gamma_S$ and $Z \in \mathcal{H}_S$. We will denote the set of such forms by $M_{k}(\Gamma_S)$.
\end{defn}
Let us now give a couple more definitions on lattices.
\begin{defn}
    Given a $\mathbb{Z}$-lattice $\Lambda$, equipped with a bilinear form $\sigma : V \times V \longrightarrow \mathbb{Q}$, where $V :=  \Lambda \otimes_{\mathbb{Z}} \mathbb{Q}$, we define its dual lattice by
    \begin{equation*}
        \Lambda^{*} := \{x \in \Lambda \otimes_{\mathbb{Z}} \mathbb{Q} \mid \sigma(x, y) \in \mathbb{Z} \textup{ }\forall y \in \Lambda\}.
    \end{equation*}
\end{defn}
\begin{defn}\label{level}
    With the notation as above, the level of the lattice $\Lambda$ is given as the least positive integer $q$ such that $\frac{1}{2}q\sigma(x,x) \in \mathbb{Z}$ for every $x \in \Lambda^{*}$.
\end{defn}
Now, $F \in M_{k}(\Gamma_{S})$ admits a Fourier expansion of the form (cf. \cite[page 25]{eisenstein_thesis}):
\begin{equation*}
    F(Z) = \sum_{r \in L_0^{*}}A(r)e(r^{t}S_0Z),
\end{equation*}
where $Z \in \mathcal{H}_S$. It is then Koecher's principle that gives us that $A(r)=0$ unless $r \in L_0^{*} \cap \overline{\mathcal{P_S}}$ (cf. \cite[Theorem 1.5.2]{eisenstein_thesis}). Here, $\overline{\mathcal{P}_S}$ denotes the closure of $\mathcal{P_S}$. By \cite[Theorem 1.6.23]{eisenstein_thesis}, and because $L$ is $\mathbb{Z}$-maximal, we have the following definition for cusp forms.
\begin{defn}\label{fourier coefficients}
    $F \in M_{k}(\Gamma_S)$ is called a cusp form if it admits a Fourier expansion of the form
    \begin{equation*}
        F(Z) = \sum_{r \in L_0^{*} \cap \mathcal{P_S}}A(r)e(r^{t}S_0Z).
    \end{equation*}
    We denote the space of cusp forms by $S_k(\Gamma_S)$.
\end{defn}
Finally, we define a Petersson inner product, as in \cite[Remark 1.6.25]{eisenstein_thesis}.
\begin{defn}\label{inner_product_orthogonal}
    Let $\mathcal{Q}$ denote a fundamental domain for the action of $\Gamma_S$ on $\mathcal{H}_S$. Assume $F,G \in M_{k}(\Gamma_S)$ with at least one belonging in $S_{k}(\Gamma_S)$. We define their Petersson inner product as
    \begin{equation*}
        \langle F,G \rangle := \int_{\mathcal{Q}} F(Z)\overline{G(Z)}\left(Q_0[\textup{Im}Z]\right)^{k}\textup{d}^{*}Z,
    \end{equation*}
    where $\textup{d}^{*}Z = \left(Q_0[\textup{Im} Z]\right)^{-(n+2)}\textup{d}Z$ denotes the $G_{\mathbb{R}}^{0}$-invariant volume element on $\mathcal{H}_S$. We remind here again that $Q_0 = S_0/2$.
\end{defn}
\section{Fourier-Jacobi Forms}\label{fourier-jacobi forms}
In this Section, we will define Fourier-Jacobi forms of lattice index. We mainly follow Mocanu's thesis in \cite{jacobi_lattice} and Krieg in \cite{krieg_jacobi}. \\\\
For now, assume that $V$ is a vector space of dimension $n<\infty$ over $\mathbb{Q}$ together with a positive definite symmetric bilinear form $\sigma$ and an even lattice $\Lambda$ in $V$, i.e. $\sigma(\lambda, \lambda) \in 2\mathbb{Z}$ for all $\lambda \in \Lambda$. We start with the following definitions:
\begin{defn}
    We define the Heisenberg group to be:
    \begin{equation*}
        H^{(\Lambda,\sigma)}(\mathbb{R}) = \{(x,y,\zeta) \mid x,y \in \Lambda \otimes \mathbb{R}, \zeta \in S^{1}\},
    \end{equation*}
    where $S^{1}:=\{z \in \mathbb{C} \mid |z|=1\}$, equipped with the following composition law:
    \begin{equation*}
        (x_1,y_1,\zeta_1)(x_2,y_2,\zeta_2) := (x_1+x_2, y_1+y_2, \zeta_1\zeta_2e(\sigma(x_1,y_2))).
    \end{equation*}
    The integral Heisenberg group is defined to be $H^{(\Lambda,\sigma)}(\mathbb{Z}):= \{(x,y,1) \mid x,y \in \Lambda\}$ and in the following we drop the last coordinate for convenience.
\end{defn}
\begin{proposition}
    The group $\textup{SL}_2(\mathbb{R})$ acts on $H^{(\Lambda, \sigma)}(\mathbb{R})$ from the right, via
    \begin{equation*}
        ((x,y,\zeta), A) \longmapsto (x,y,\zeta)^{A} := \left((x,y)A, \zeta e\left(\sigma[(x,y)A]-\frac{1}{2}\sigma(x,y)\right)\right).
    \end{equation*}
    where $(x,y)A$ denotes the formal multiplication of the vector $(x,y)$ with $A$, i.e. if $A =\m{a&b\\c&d}$, we have $(x,y)A := (ax+cy, bx+dy)$.
\end{proposition}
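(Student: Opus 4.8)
The plan is to check directly that the displayed formula satisfies the axioms of a right action by automorphisms of $H^{(\Lambda,\sigma)}(\mathbb{R})$, in three stages: (a) the map lands back in the Heisenberg group; (b) it obeys the identity and composition laws of a right action; (c) for each fixed $A$ it is a group homomorphism. Stage (a) is immediate, since $(x,y)A$ again has entries in $\Lambda\otimes\mathbb{R}$ and the factor $e\bigl(\sigma[(x,y)A]-\tfrac12\sigma(x,y)\bigr)$ has absolute value $1$, so $\zeta$ is sent to another element of $S^1$.

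For stage (b) I would first record the elementary associativity $((x,y)A)B=(x,y)(AB)$ of the pair map $(x,y)\mapsto(x,y)A$, which settles the first two coordinates of $(h^A)^B$ and $h^{AB}$. For the central coordinate, the key point is to unwind the bracket: writing $(x',y')=(x,y)A$, the exponent $\sigma[(x,y)A]$ is the mixed term $\tfrac12\sigma(x',y')$, so the correction takes the coboundary form $c_A(x,y)=\tfrac12\sigma\bigl((x,y)A\bigr)-\tfrac12\sigma(x,y)$. Applying $A$ and then $B$, the corrections telescope: the term $+\tfrac12\sigma\bigl((x,y)A\bigr)$ produced in the first step cancels the term $-\tfrac12\sigma\bigl((x,y)A\bigr)$ in the second, leaving precisely $c_{AB}(x,y)$. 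Thus $(h^A)^B=h^{AB}$, and $A=I$ gives $c_I=0$, i.e.\ $h^I=h$. Note that this stage does not use $\det A=1$.

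The substance of the argument is stage (c). With $\Phi_A(h)=h^A$ and $h_i=(x_i,y_i,\zeta_i)$, the first two coordinates of $\Phi_A(h_1h_2)$ and $\Phi_A(h_1)\Phi_A(h_2)$ coincide by bilinearity, so everything reduces to a congruence between central coordinates. Expanding the quadratic correction through its polarization together with the cocycle $e(\sigma(x_1,y_2))$ of the composition law, and using the symmetry $\sigma(x,y)=\sigma(y,x)$, I expect the discrepancy between the two sides to collapse to
\[
\tfrac12\bigl(1-\det A\bigr)\bigl(\sigma(x_1,y_2)-\sigma(x_2,y_1)\bigr).
\]
This vanishes exactly because $A\in\mathrm{SL}_2(\mathbb{R})$. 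The only genuine sub-computation here is that the pair map $(x,y)\mapsto(x,y)A$ scales the symplectic pairing $\sigma(x_1,y_2)-\sigma(x_2,y_1)$ by $\det A$, a one-line bilinear calculation resting solely on the symmetry of $\sigma$. Finally, stage (b) yields $\Phi_A\circ\Phi_{A^{-1}}=\Phi_I=\mathrm{id}$, so each $\Phi_A$ is bijective and hence an automorphism, and $A\mapsto\Phi_A$ is a right action.

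I expect the main obstacle to be purely the bookkeeping in (c): isolating the correction so that the non-symmetric Heisenberg cocycle $e(\sigma(x_1,y_2))$ is exactly absorbed, with $\det A=1$ supplying the final cancellation. Once one recognizes that $\sigma[(x,y)A]$ is the mixed term $\tfrac12\sigma\bigl((x,y)A\bigr)$ — so that the correction is a genuine coboundary — both the telescoping in (b) and the cancellation in (c) become transparent.
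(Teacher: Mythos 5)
The paper states this proposition without proof (it is imported from the cited sources on Jacobi forms of lattice index), so there is no argument in the paper to compare against; your direct verification is correct and is the natural one. In particular, your reading of $\sigma[(x,y)A]$ as $\tfrac12\sigma(x',y')$ for $(x',y')=(x,y)A$ is the only interpretation under which the central correction is a coboundary, your telescoping argument then gives the action axioms (correctly noted as independent of $\det A$), and the automorphism property reduces, exactly as you compute, to the fact that right multiplication by $A$ scales the alternating form $\sigma(x_1,y_2)-\sigma(x_2,y_1)$ by $\det A$, so that $\det A=1$ kills the discrepancy $\tfrac12(1-\det A)\left(\sigma(x_1,y_2)-\sigma(x_2,y_1)\right)$.
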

\begin{defn}
    The real Jacobi group associated with $(\Lambda, \sigma)$, denoted by $J^{(\Lambda, \sigma)}(\mathbb{R})$, is defined to be the semi-direct product of $\textup{SL}_2(\mathbb{R})$ and $H^{(\Lambda, \sigma)}(\mathbb{R})$. The composition law is then
    \begin{equation*}
        (A,h)\cdot (A',h') := (AA', h^{A'}h').
    \end{equation*}
    We also define the integral Jacobi group to be the semi-direct product of $\textup{SL}_{2}(\mathbb{Z})$ and $H^{(\Lambda, \sigma)}(\mathbb{Z})$ and we will denote it by $J^{(\Lambda, \sigma)}$.
\end{defn}
We are now going to define some slash operators, acting on holomorphic, complex-valued functions on $\mathbb{H} \times (\Lambda \otimes \mathbb{C})$.
\begin{defn}\label{k-actions}
    Let $k$ be a positive integer and $f : \mathbb{H} \times (\Lambda \otimes \mathbb{C}) \longrightarrow \mathbb{C}$ a holomorphic function. For $M =\m{a&b\\c&d} \in \textup{SL}_{2}(\mathbb{R})$ , we define:
    \begin{equation*}
        \left(f|_{k, (\Lambda, \sigma)} [M]\right)(\tau, z) := (c\tau+d)^{-k}e^{-\pi i c\sigma(z,z)/(c\tau + d)}f\left(\frac{a\tau+b}{c\tau+d}, \frac{z}{c\tau+d}\right).
    \end{equation*}
    In the case when $M \in \textup{GL}_2^{+}(\mathbb{R})$, we use $\det(M)^{-1/2}M$ instead of $M$. For $h = (x,y,\zeta) \in H^{(\Lambda, \sigma)}(\mathbb{R})$:
    \begin{equation*}
        \left(f \mid_{k, (\Lambda,\sigma)}h\right)(\tau,z): = \zeta \cdot e^{\pi i \tau \sigma(x, x)+2\pi i \sigma(x, z)}f(\tau, z+x\tau+y).
\end{equation*}
    Finally, for the action of $J^{(\Lambda, \sigma)}(\mathbb{R})$ on complex-valued, holomorphic functions on $\mathbb{H} \times (L \otimes \mathbb{C})$, we have:
    \begin{equation*}
        (f, (A,h)) \longmapsto \left(f \mid_{k, (\Lambda, \sigma)}(A,h)\right)(\tau, z) := \left((f \mid_{k, (\Lambda, \sigma)} A) \mid_{k, (\Lambda, \sigma)} h\right)(\tau,z).
    \end{equation*}
\end{defn}
We now have the following Definition (\cite[Definition 1.23]{jacobi_lattice}):
\begin{defn}\label{fourier-jacobi-defn}
Let $V_{\mathbb{C}} := V \otimes \mathbb{C}$ and extend $\sigma$ to $V_{\mathbb{C}}$ by $\mathbb{C}$-linearity. For $k$ a positive integer, a holomorphic function $f: \mathbb{H} \times V_{\mathbb{C}} \rightarrow \mathbb{C}$ (where $\mathbb{H}$ denotes the usual upper half plane) is called a Jacobi form of weight $k$ with respect to $(\Lambda, \sigma)$ if the following hold:
\begin{itemize}
    \item For all $\gamma \in J^{(\Lambda, \sigma)}$, and $(\tau,z) \in \mathbb{H} \times V_{\mathbb{C}}$, we have
    \begin{equation*}
        \left(f \mid_{k, (\Lambda, \sigma)} \gamma \right) (\tau,z) = f(\tau,z).
    \end{equation*}
    \item $f$ has a Fourier expansion of the form
\begin{equation*}
    f(\tau, z) = \sum_{m \in \mathbb{Z}, r \in \Lambda^{*}, 2m\geq \sigma[r]} c_{f}(m,r)e(m\tau + \sigma(r, z)).
\end{equation*}
\end{itemize}
We denote the space of such forms by $J_{k}(\Lambda, \sigma)$. We say $f$ is a Jacobi cusp form if $c_f(m,r)=0$, when $2m= \sigma[r]$. We denote the space of Fourier-Jacobi cusp forms by $S_{k}(\Lambda, \sigma)$.
\end{defn}
We now have a notion of a scalar product for elements of $S_{k}(\Lambda, \sigma)$ (\cite[Definition 1.33]{jacobi_lattice}). 
\begin{defn}\label{inner product_fourier-jacobi}
    Let $\phi, \psi \in S_{k}(\Lambda, \sigma)$. If $U \leq J^{(\Lambda,\sigma)}$ of finite index, we define their Petersson inner product via:
    \begin{equation*}
        \langle \phi, \psi\rangle_{U} := \frac{1}{\left[J^{\Lambda}:U\right]}\int_{U \backslash \mathcal{H}\times (\Lambda \otimes \mathbb{C})} \phi(\tau,z)\overline{\psi(\tau, z)}v^{k}e^{-2\pi \sigma(y,y)v^{-1}}\textup{d}V,
    \end{equation*}
    where $\tau = u+iv$, $z = x+iy$ and $\textup{d}V := v^{-n-2}\textup{d}u\textup{d}v\textup{d}x\textup{d}y$. This inner product does not depend on the choice of $U$, so in what follows, we drop the subscript.
\end{defn}
We now specify to our case by taking $\Lambda = L = \mathbb{Z}^{n}$ and $\sigma(x,y) = x^{t}Sy$ for all $x,y \in V$. We also write $J_{S}$ for the integral Jacobi group in this case, i.e. $J_S := \textup{SL}_{2}(\mathbb{Z}) \rtimes (\mathbb{Z}^{n} \times \mathbb{Z}^{n})$.\\

Let us discuss the Fourier-Jacobi expansion of orthogonal cusp forms of weight $k$ with respect to $\Gamma_{S}$. If we write $Z = (\omega, z, \tau) \in \mathcal{H}_{S}$ with $\omega,\tau \in \mathbb{C}, z \in \mathbb{C}^{n}$, we have that for any $m \in \mathbb{Z}$ (\cite[page 244]{gritsenko_jacobi}):
\begin{equation*}
    F(\omega+m, z, \tau) = F(\omega, z, \tau)
\end{equation*}
(by the form of elements belonging to $\Gamma_S$). Hence, we can write
\begin{equation*}
    F(Z) = \sum_{m \geq 1}\phi_m(\tau, z)e^{2\pi i m \omega},
\end{equation*}
and we call the functions $\phi_m(\tau, z)$ the Fourier-Jacobi coefficients of $F$. We note that then $\phi_{m} \in J_{k}(\mathbb{Z}^{n}, m\sigma)$. We now want to view Fourier-Jacobi forms as automorphic forms under the action of a parabolic subgroup.
\begin{defn}\label{parabolic}
    The parabolic subgroup of $\Gamma_{S}$ fixing the two-dimensional isotropic subspace spanned by $e_1,e_2$ (standard basis vectors) is defined by
    \begin{equation*}
        \Gamma_{S,J} := \left\{\m{*&*\\0&D} \in \Gamma_{S} \mid D \in \textup{Mat}_{2}(\mathbb{Z})\right\}.
    \end{equation*}
\end{defn}
Now, there is an embedding $\iota$
\begin{align*}
    \iota : J_{S} &\longrightarrow \Gamma_{S,J}\\
    (D, [x,y]) &\longmapsto M_{D} \cdot H_{x,y},
\end{align*}
where
\begin{equation*}
    H_{x,y}:= \m{1&0&y^{t}S&0&\frac{1}{2}S[y]\\0&1&x^{t}S&\frac{1}{2}S[x]&x^{t}Sy\\0&0&1_{n}&x&y\\0&0&0&1&0\\0&0&0&0&1} \textup{, }  M_{D}:= \m{D^{*}&0&0\\0&1_{n}&0\\0&0&D},  
\end{equation*}
with $D^{*}:= D\left[\m{-1&0\\0&1}\right]$ (cf. \cite[page 44]{eisenstein_thesis}). We denote by $\Gamma_{S,J}^{\bullet} := \iota(J_S)$. By \cite[Proposition 2.2.7]{ajouz}, we have that the action of $J_{S}$ on $\mathbb{H} \times \mathbb{C}^{n}$ is given by
\begin{equation*}
    ((D, [x,y]), (\tau, z)) \longmapsto \left(D\tau, \frac{z+x\tau+y}{j'(D,\tau)}\right),
\end{equation*}
where $j'$ denotes the usual factor of automorphy for the action of $\textup{SL}_{2}(\mathbb{Z})$ on $\mathbb{H}$.\\\\
Now, if we take an element $M = \iota((D, [x,y])) \in \Gamma_{S,J}^{\bullet}$, we can see that its action on an element $(\omega, z, \tau) \in \mathcal{H}_S$
\begin{align*}
    M \langle (\omega, z, \tau) \rangle = \left(*, \frac{z+x\tau+y}{j(D,\tau)}, D\tau \right)
\end{align*}
is the same as the action of $J_{S}$ on $\mathbb{H} \times \mathbb{C}^{n}$. We now have the following Proposition regarding the fundamental domain of the action of $\Gamma_{S,J}^{\bullet}$ on $\mathcal{H}_S$, which will be useful later.
\begin{proposition}\label{fundamental domain}
For $Z = (\omega,z,\tau) \in \mathcal{H}_S$, we write $\omega = x_1+iy_1, z = u+iv, \tau = x_2+iy_2$. A valid choice for the fundamental domain of the action of $\Gamma_{S,J}^{\bullet}$ on $\mathcal{H}_S$ is 
\begin{equation*}
\mathcal{F}^{J} := \left\{ Z = (\omega,z,\tau) \in \mathcal{H}_{S} \mid (z,\tau) \in \mathcal{F}, y_1y_2 - \frac{1}{2}S[v]>0, \  -\frac{1}{2} \leq x_1 \leq \frac{1}{2} \right\},
\end{equation*}
where $\mathcal{F}$ is a fundamental domain for the action of $J_{S}$ on $\mathbb{H} \times \mathbb{C}^n$.
\end{proposition}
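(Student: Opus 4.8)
The plan is to build $\mathcal{F}^J$ as an iterated (``tower'') fundamental domain: first reduce the pair $(z,\tau)$ using the $J_S$-part of $\Gamma_{S,J}^{\bullet}$, and then reduce the remaining variable $\omega$ using the kernel of this reduction. The starting observation is the action computed just above: for $M=\iota((D,[x,y]))\in\Gamma_{S,J}^{\bullet}$ the $z$- and $\tau$-components of $M\langle Z\rangle$ are $z\mapsto\frac{z+x\tau+y}{j(D,\tau)}$ and $\tau\mapsto D\tau$, exactly as $J_S$ acts on $\mathbb{H}\times\mathbb{C}^n$; in particular they depend only on $(z,\tau)$ and on the image of $M$ in $J_S$, not on $\omega$. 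Thus there is a well-defined homomorphism $\pi:\Gamma_{S,J}^{\bullet}\to J_S$ recording this action, and it is onto because $\Gamma_{S,J}^{\bullet}=\iota(J_S)$.

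First I would determine $K:=\ker\pi$, i.e. the subgroup of $\Gamma_{S,J}^{\bullet}$ fixing every $(z,\tau)$. Such an element must fix $\tau$ for all $\tau$, forcing $D=\mathbf 1_2$, and must satisfy $z+x\tau+y=z$ for all $(\tau,z)$, forcing $x=y=0$; the residual elements are the central unipotent matrices $T(m)$ arising from the failure of the Heisenberg generators to multiply additively, namely $H_{x,y}H_{x',y'}=H_{x+x',y+y'}\,T(y^{t}Sx')$, whose only nonzero off-diagonal entries sit in the $(1,n+3)$ and $(2,n+4)$ slots. A direct computation with the action formula shows $j(T(m),Z)=1$ and that $T(m)$ fixes $z,\tau$ while sending $\omega\mapsto\omega+m$. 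Provided the values $y^{t}Sx'$ exhaust $\mathbb{Z}$ (which holds here, being immediate for the $E_8$ Gram matrix and, more generally, following from primitivity of $S$ under the maximality hypothesis on $L$), we get $K=\{T(m):m\in\mathbb Z\}\cong\mathbb Z$ acting on $\mathcal H_S$ precisely by the integral translations $\omega\mapsto\omega+m$ — the same periodicity already noted for $F$. This yields the exact sequence $1\to K\to\Gamma_{S,J}^{\bullet}\xrightarrow{\pi}J_S\to1$.

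With this structure in hand, the two defining properties of a fundamental domain follow. For surjectivity, given $Z=(\omega,z,\tau)\in\mathcal H_S$ I would choose $\gamma_0\in\Gamma_{S,J}^{\bullet}$ with $\pi(\gamma_0)$ moving $(z,\tau)$ into $\mathcal F$; writing $\gamma_0\langle Z\rangle=(\omega_0,z_0,\tau_0)$ with $(z_0,\tau_0)\in\mathcal F$, I then apply the unique $T(m)\in K$ with $\operatorname{Re}(\omega_0)+m\in[-\tfrac12,\tfrac12]$, which leaves $(z_0,\tau_0)$ untouched. The resulting point still lies in $\mathcal H_S$ because the whole $G_{\mathbb R}^{0}$-action preserves $\mathcal H_S$; equivalently its imaginary part stays in $\mathcal P_S$, so the inequality $y_1y_2-\tfrac12 S[v]>0$, which is nothing but $\phi_0[\operatorname{Im}Z]>0$, holds automatically and the point lands in $\mathcal F^J$. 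For essential disjointness, if $Z,Z'\in\mathcal F^J$ satisfy $Z'=\gamma\langle Z\rangle$, then $\pi(\gamma)$ carries $(z,\tau)\in\mathcal F$ to $(z',\tau')\in\mathcal F$; since $\mathcal F$ is a fundamental domain for $J_S$, these agree and $\pi(\gamma)$ fixes $(z,\tau)$ up to the boundary of $\mathcal F$, whence $\gamma\in K$. Then $\gamma=T(m)$ gives $x_1'=x_1+m$ with $x_1,x_1'\in[-\tfrac12,\tfrac12]$, forcing $m=0$ and $Z=Z'$, again up to a measure-zero set.

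I expect the genuine work to be the middle paragraph: verifying that the pointwise stabiliser of the $(z,\tau)$-coordinates is exactly the rank-one central subgroup $K$, and that its generator realises $\omega\mapsto\omega+1$ with strip width exactly $1$. This amounts to recognising $\Gamma_{S,J}^{\bullet}$ as a central extension of $J_S$ by $\mathbb Z$ — the ``$\omega$-translations'' unwinding the $S^{1}$-phase that is trivialised in the integral Heisenberg group — together with the arithmetic input that maximality of $L$ makes $\sigma$ unimodular enough for the generator to be a single translation. The surjectivity and disjointness arguments are then formal, the only care needed being the standard book-keeping of the boundary identifications inherited from $\mathcal F$.
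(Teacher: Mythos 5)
Your overall strategy is the same as the paper's: reduce $(z,\tau)$ into $\mathcal{F}$ using the fact that $\Gamma_{S,J}^{\bullet}$ acts on these coordinates exactly as $J_S$ acts on $\mathbb{H}\times\mathbb{C}^n$, then reduce $\omega$ into the unit strip by integral translations, and observe that the inequality $y_1y_2-\tfrac12 S[v]>0$ is nothing but $Q_0[\textup{Im}\,Z]>0$, hence automatic. Your verification of essential disjointness via the exact sequence is a welcome addition; the paper only carries out the covering step. The problem lies in your middle paragraph, which you yourself identify as ``the genuine work.''

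You produce the $\omega$-translations $T(m)$ only as the central correction factors in the Heisenberg multiplication, $H_{x,y}H_{x',y'}=H_{x+x',y+y'}\,T(\pm\, y^{t}Sx')$, so this construction yields $T(m)$ only for $m$ in the subgroup of $\mathbb{Z}$ generated by the values $y^{t}Sx'$, i.e.\ by the entries of $S$. Your claim that these values exhaust $\mathbb{Z}$ because $L$ is maximal is false: maximality does not force the entries of $S$ to generate the unit ideal. For instance $S=(2)$ (the $A_1$ lattice, $n=1$) and $S=\textup{diag}(2,2)$ are maximal even lattices, yet every value $y^{t}Sx'$ is even. For such $S$ — which are within the scope of the proposition — your commutator construction only produces the translations $\omega\mapsto\omega+2m$, so the kernel you obtain is $2\mathbb{Z}$ rather than $\mathbb{Z}$, the exact sequence $1\to\mathbb{Z}\to\Gamma_{S,J}^{\bullet}\to J_S\to 1$ is not established, and your argument would only justify a strip of width $2$, not the width-$1$ strip in the statement. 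The paper sidesteps this entirely: the translation $\omega\mapsto\omega+1$ is exhibited directly as the explicit unipotent matrix $T_{\lambda}$ with $\lambda=(1,0,\dots,0)^{t}$, an Eichler-type transvection which lies in $\Gamma_S$ (indeed in the parabolic $\Gamma_{S,J}$) for every even $S$, independently of any commutator identity or gcd condition. To repair your proof you must either invoke these explicit matrices, as the paper does, or add a hypothesis (some odd entry in $S$) that your derivation of the kernel genuinely requires.
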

\begin{proof}
Let $Z = (\omega, z, \tau) \in \mathcal{H}_S$. We can then pick $g \in \Gamma_{S,J}^{\bullet}$ such that $\iota^{-1}(g) \in J_{S}$ and that $Z' := g \langle Z\rangle = (\omega',z,\tau)$ with $(z,\tau) \in \mathcal{F}$ and $\omega' \in \mathbb{H}$ arbitrary. This follows from the fact that the actions are the same, as we have shown above. If now
\begin{equation*}
    T_{\lambda} = \m{1&-\lambda^{t}S_0& -Q_0[\lambda] \\ 0 & 1_{n+2}&\lambda \\ 0&0&1},
\end{equation*}
for $\lambda \in \mathbb{Z}^{n+2}$, then $T_{\lambda} \in \Gamma_{S}$ (cf. \cite[Theorem 1.4.4]{eisenstein_thesis}).
We then have $T_{\lambda}\langle Z'\rangle = Z'+\lambda$ and so we can act with a suitable $T_{\lambda}$, so that $-1/2 \leq x_1 \leq 1/2$. Finally, the condition $y_1y_2 - S[v]/2>0$ follows from the fact that $Z \in \mathcal{H}_S$ and the definition of $\mathcal{P}_{S}$ in Section \ref{preliminaries}. 
\end{proof}
\section{Dirichlet Series and Integral Representation}\label{integral_representation}
In this Section, we define the Dirichlet series of interest and give its integral representation in terms of an Eisenstein series of Klingen type.\\\\
Let $F,G \in S_{k}(\Gamma_S)$ with corresponding Fourier-Jacobi coefficients $\{\phi_m\}_{m=1}^{\infty}, \{\psi_{m}\}_{m=1}^{\infty}$. We then define the following Dirichlet series:
\begin{equation*}
    \mathcal{D}_{F,G}(s) := \sum_{m=1}^{\infty}\langle \phi_m, \psi_m \rangle m^{-s}.
\end{equation*}
Here, $\langle \textup{ }, \textup{ }\rangle$ denotes the inner product of Definition \ref{inner product_fourier-jacobi}.
\begin{lemma}\label{convergence_dirichlet}
    $\mathcal{D}_{F,G}(s)$ converges absolutely for $\textup{Re}(s) > k+1$ and represents a holomorphic function on this domain.
\end{lemma}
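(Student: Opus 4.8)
The plan is to prove polynomial growth of the diagonal Petersson norms $\langle \phi_m,\phi_m\rangle$ in $m$ and then invoke the elementary theory of Dirichlet series. First, by the Cauchy--Schwarz inequality for the product of Definition \ref{inner product_fourier-jacobi}, we have $|\langle \phi_m,\psi_m\rangle| \leq \langle \phi_m,\phi_m\rangle^{1/2}\langle \psi_m,\psi_m\rangle^{1/2}$, so it suffices to show $\langle \phi_m,\phi_m\rangle = O(m^{k})$ as $m\to\infty$ (and the same for $\psi_m$), with an implied constant depending only on $F$ (resp. $G$). Granting this, $\sum_{m\geq 1}|\langle \phi_m,\psi_m\rangle|\,m^{-\mathrm{Re}(s)} \ll \sum_{m\geq 1} m^{k-\mathrm{Re}(s)}$ converges for $\mathrm{Re}(s) > k+1$; since the convergence is locally uniform on every half-plane $\mathrm{Re}(s)\geq k+1+\varepsilon$, the series defines a holomorphic function on $\mathrm{Re}(s)>k+1$ by Weierstrass' theorem.

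To bound $\langle \phi_m,\phi_m\rangle$, the key input is that $F$ is a cusp form. The function $\Phi(Z) := |F(Z)|^2\,(Q_0[\mathrm{Im}\,Z])^{k}$ is $\Gamma_S$-invariant, using $Q_0[\mathrm{Im}(\gamma\langle Z\rangle)] = |j(\gamma,Z)|^{-2}Q_0[\mathrm{Im}\,Z]$ together with Definition \ref{modular_form_defn}, and it is bounded on $\mathcal{H}_S$, say by $M>0$, since a cusp form decays at every cusp. Writing $Z=(\omega,z,\tau)$ with $\omega=x_1+iy_1$, $z=u+iv$, $\tau=x_2+iy_2$ as in Proposition \ref{fundamental domain}, so that $Q_0[\mathrm{Im}\,Z]=y_1y_2-\tfrac12 S[v]$, I would insert the Fourier--Jacobi expansion $F(Z)=\sum_{m\geq 1}\phi_m(\tau,z)e^{2\pi i m\omega}$ and integrate in $x_1$ over $[-\tfrac12,\tfrac12]$. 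Orthogonality of the characters $e^{2\pi i m x_1}$ gives
\begin{equation*}
\Bigl(y_1y_2-\tfrac12 S[v]\Bigr)^{k}\sum_{m\geq 1}|\phi_m(\tau,z)|^2\,e^{-4\pi m y_1} = \int_{-1/2}^{1/2}\Phi(Z)\,dx_1 \leq M .
\end{equation*}
Every summand is non-negative, so each one is $\leq M$; minimising the resulting estimate $|\phi_m(\tau,z)|^2 \leq M\,e^{4\pi m y_1}(y_1y_2-\tfrac12 S[v])^{-k}$ over the admissible range $y_1 > S[v]/(2y_2)$, where the optimum occurs at $y_1y_2-\tfrac12 S[v]=ky_2/(4\pi m)$, yields the pointwise bound
\begin{equation*}
|\phi_m(\tau,z)|^2 \leq C_0\, m^{k}\, y_2^{-k}\, e^{2\pi m S[v]/y_2}, \qquad C_0=M e^{k}(4\pi/k)^{k},
\end{equation*}
with $C_0$ independent of $m$ and of $(\tau,z)$ (here we use $k>0$, which we may assume, as otherwise $S_k(\Gamma_S)$ is trivial).

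I would then substitute this into the Jacobi Petersson integral. Taking $U$ to be the full integral Jacobi group in Definition \ref{inner product_fourier-jacobi} and recalling $\phi_m \in J_k(\mathbb{Z}^n, m\sigma)$, the integrand carries the weight $(\mathrm{Im}\,\tau)^{k}e^{-2\pi m S[v]/y_2}$ against the invariant measure $(\mathrm{Im}\,\tau)^{-n-2}\,dx_2\,dy_2\,du\,dv$. The crucial point is that the exponential $e^{2\pi m S[v]/y_2}$ in the pointwise bound cancels exactly the Gaussian factor $e^{-2\pi m S[v]/y_2}$, and the powers of $y_2$ collapse to $y_2^{-n-2}$. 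Hence
\begin{equation*}
\langle \phi_m,\phi_m\rangle \leq C_0\, m^{k}\int_{\mathcal{F}} y_2^{-n-2}\,dx_2\,dy_2\,du\,dv,
\end{equation*}
where $\mathcal{F}$ is a fundamental domain for the Jacobi group acting on $\mathbb{H}\times\mathbb{C}^n$. Integrating the Heisenberg variables $(u,v)$ over a fundamental parallelotope of the lattice $\mathbb{Z}^n\tau+\mathbb{Z}^n$ in $\mathbb{C}^n$ produces its covolume $y_2^{n}$, reducing the integral to $\int_{\mathrm{SL}_2(\mathbb{Z})\backslash\mathbb{H}} y_2^{-2}\,dx_2\,dy_2$, the finite hyperbolic volume $\pi/3$. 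This gives $\langle \phi_m,\phi_m\rangle = O(m^{k})$ and closes the argument.

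The main obstacle is the second paragraph: producing a pointwise bound on $\phi_m$ whose exponential factor is precisely tuned (via the optimisation in $y_1$) to cancel the Gaussian weight appearing in the Jacobi inner product, so that the $m$-dependence is isolated as the clean power $m^{k}$ and the residual integral becomes the finite covolume of the fundamental domain. The boundedness of $\Phi$ is standard once the cuspidality of $F$ and its decay at the cusps are invoked, but it must be used carefully; everything else is bookkeeping in the explicit coordinates of Proposition \ref{fundamental domain}.
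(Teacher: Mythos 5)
Your proof is correct and takes essentially the same route as the paper: both rest on the boundedness of $|F(Z)|\left(Q_0[\textup{Im}Z]\right)^{k/2}$ on $\mathcal{H}_S$, extract from it a pointwise bound $|\phi_m| \ll m^{k/2}(\textup{Im}\,\tau)^{-k/2}e^{2\pi m Q[\textup{Im}\,z]/\textup{Im}\,\tau}$ whose exponential exactly cancels the Gaussian weight in the Jacobi Petersson inner product, and conclude $\langle\phi_m,\psi_m\rangle = \mathcal{O}(m^{k})$, hence convergence for $\textup{Re}(s)>k+1$. The only differences are organisational: the paper extracts the coefficient via Cauchy's integral formula with the choice $\textup{Im}\,\omega = Q[\textup{Im}\,z]/\textup{Im}\,\tau + 1/m$ and bounds $\phi_m$ and $\psi_m$ separately, whereas you use Parseval orthogonality in $x_1$, an optimisation over $y_1$, and a Cauchy--Schwarz reduction to the diagonal norms.
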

\begin{proof}
    The proof is similar to \cite[Lemma 1]{kohnen_skoruppa}. We will show for $N \geq 1$ that
    \begin{equation*}
        \left \langle \phi_N, \psi_N \right \rangle = \mathcal{O}(N^{k}),
    \end{equation*}
    with the constant depending only on $F, G$. Indeed, fix $(\tau, z) \in \mathbb{H} \times \mathbb{C}^{n}$ and write $\tau = u+iv, \textup{ } z = x+iy$. If $F(q) = \sum_{N=1}^{\infty}\phi_{N}(\tau,z)q^{N}$, with $q=e^{2\pi i \omega}$, we have by Cauchy's integral formula that
    \begin{equation*}
        \phi_{N}(\tau,z) = \oint_{|q|=r}\frac{F(q)}{q^{N+1}}\textup{d}q,
    \end{equation*}
    for any $0 < r < e^{-2\pi Q[y]/v}$. The bounds follow from the fact that $Q_0[\textup{Im}Z]>0$ (here $Q=S/2$ and $Z = (\omega,z,\tau) \in \mathcal{H}_S$). If now $\omega = u'+iv'$, the integral can be written as
    \begin{equation*}
        \phi_{N}(\tau,z) = \int_{0}^{1}F(Z)e^{-2\pi i N \omega}\textup{d}u',
    \end{equation*}
    for any $v' > Q[y]/v$. But now $\left|F(Z)\right|\left(Q_0[\textup{Im}Z]\right)^{k/2}$ is bounded on $\mathcal{H}_S$ from \cite[II, Lemma 3.28]{hauffe21}, say by a constant $C>0$. Therefore, after choosing $v'=Q[y]/v+1/N$, we have
    \begin{equation*}
        \left|\phi_{N}(\tau,z)\right| \leq Ce^{2\pi}\int_{0}^{1}\left(Q_0[\textup{Im}Z]\right)^{-k/2}e^{2\pi N Q[y]/v}\textup{d}u' = Ce^{2\pi}\left(\frac{v}{N}\right)^{-k/2}e^{2 \pi N Q[y]/v}.
    \end{equation*}
    Similarly for $\psi_N$ and then the claim follows from the definition of the inner product in Definition \ref{inner product_fourier-jacobi}.
\end{proof}
In order now to give an integral representation for $\mathcal{D}_{F,G}$, we need to define an appropriate Eisenstein series of Klingen type, in an analogous way to \cite[Section 3]{eisenstein_thesis}.
\begin{defn}\label{eisentein_defn}
    Let $Z \in \mathcal{H}_S$ and $s \in \mathbb{C}$ with $\textup{Re}(s) > n+1$. We define the real analytic Eisenstein series of Klingen type to be
    \begin{equation*}
        E(Z,s) := \sum_{\gamma \in \Gamma_{S,J} \backslash \Gamma_{S}} \left(\frac{Q_0[\textup{Im}(\gamma Z)]}{\textup{Im}((\gamma Z)_2)}\right)^s,
    \end{equation*}
    where for $Z = (\omega, z, \tau) \in \mathcal{H}_S$, we write $Z_2 := \tau$. Also, recall $Q_0 = S_0/2$.
\end{defn}
\begin{proposition}
    $E(Z,s)$ is well defined, is invariant under the action of $\Gamma_S$ and converges absolutely and uniformly whenever $Z$ belongs to a compact subset of $\mathcal{H}_S$ and $s$ satisfies $\textup{Re}(s) > n+1$.
\end{proposition}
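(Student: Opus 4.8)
The plan is to treat the three assertions by first reducing well-definedness and $\Gamma_S$-invariance to the transformation behaviour of the summand, and then establishing convergence by a separate estimate. Set $\phi_s(W):=\left(Q_0[\textup{Im}(W)]/\textup{Im}(W_2)\right)^{s}$, so that $E(Z,s)=\sum_{\gamma\in\Gamma_{S,J}\backslash\Gamma_S}\phi_s(\gamma\langle Z\rangle)$. The first ingredient I would record is the transformation law
\[
Q_0[\textup{Im}(g\langle Z\rangle)]=Q_0[\textup{Im}(Z)]\,\lvert j(g,Z)\rvert^{-2},\qquad g\in G_{\mathbb{R}}^{0},
\]
which is precisely the identity making the Petersson product of Definition \ref{inner_product_orthogonal} invariant and which follows directly from the explicit formula for $g\langle Z\rangle$; I would cite \cite{eisenstein_thesis} for it. The second ingredient is the companion law $\textup{Im}((\gamma\langle Z\rangle)_2)=\textup{Im}(Z_2)\,\lvert j(\gamma,Z)\rvert^{-2}$ for $\gamma\in\Gamma_{S,J}$, which I would verify on generators: for $M_D$ with $D=\m{a&b\\c&d}\in\textup{SL}_2(\mathbb{Z})$ the coordinate $\tau=Z_2$ is sent to $D\tau$ with $j(M_D,Z)=c\tau+d$, so that $\textup{Im}(D\tau)=\textup{Im}(\tau)\lvert c\tau+d\rvert^{-2}$; for the Heisenberg elements $H_{x,y}$ and the translations $T_\lambda$ one has $j=1$ with $\textup{Im}(Z_2)$ unchanged; and the finite group $\textup{SO}(S;\mathbb{Z})$ embedded in the middle block fixes both $Q_0[\textup{Im}(Z)]$ and $\textup{Im}(Z_2)$. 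Combining the two laws, the automorphy factors cancel and $\phi_s(\gamma\langle Z\rangle)=\phi_s(Z)$ for every $\gamma\in\Gamma_{S,J}$, so each summand depends only on the coset $\Gamma_{S,J}\gamma$; this is the well-definedness. For $\Gamma_S$-invariance I would use the standard coset-permutation argument: for $\delta\in\Gamma_S$ right multiplication permutes $\Gamma_{S,J}\backslash\Gamma_S$, and since the summand is coset-independent, $E(\delta\langle Z\rangle,s)=\sum_\gamma\phi_s((\gamma\delta)\langle Z\rangle)=E(Z,s)$.

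The main obstacle is absolute and locally uniform convergence for $\textup{Re}(s)>n+1$, where I would follow \cite[Section 3]{eisenstein_thesis}. Attach to $Z$ the positive definite majorant $R_Z$ of $S_1$ (Definition \ref{majorants_defn}) and identify each coset $\Gamma_{S,J}\gamma$ with the primitive isotropic pair $\ell=\m{l&m}$ it determines, so that the summand becomes $\left(\det R_Z[\ell]\right)^{-s/2}$; this is exactly the computation recorded, in the one-cusp case, in Proposition \ref{main_proposition_eisenstein}, but the per-coset formula and the injection of $\Gamma_{S,J}\backslash\Gamma_S$ into $X/\textup{GL}_2(\mathbb{Z})$ hold regardless of the number of cusps, so it suffices to bound $\sum_{\ell}\left(\det R_Z[\ell]\right)^{-\textup{Re}(s)/2}$. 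On $\textup{GL}_2(\mathbb{Z})$-reduced representatives, positive-definiteness gives $\det R_Z[\ell]\geq\tfrac{3}{4}\,R_Z[l]\,R_Z[m]$, so the tail is dominated by an Epstein-type zeta function of the rank-$(n+4)$ form $R_Z$; a Siegel reduction-theory count of isotropic pairs of bounded determinant then yields convergence, with the estimates uniform for $Z$ in a compact set because $R_Z$ and its least eigenvalue vary continuously.

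I expect the genuine effort to lie entirely in this last step, specifically in pinning down the exact abscissa $n+1$ (the half-sum-of-roots threshold attached to the maximal parabolic $\Gamma_{S,J}$ of $\textup{SO}(2,n+2)$) and in making the comparison with the Epstein zeta uniform on compacta; both are provided by the estimates of \cite{eisenstein_thesis}, which is exactly why the Eisenstein series was set up in analogy with that reference.
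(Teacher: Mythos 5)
Your treatment of well-definedness and $\Gamma_S$-invariance is correct and is essentially the paper's own argument: the paper likewise combines $Q_0[\textup{Im}(\gamma Z)] = |j(\gamma,Z)|^{-2}\,Q_0[\textup{Im}Z]$ with the fact that for $\gamma\in\Gamma_{S,J}$ one has $(\gamma Z)_2 = D\langle Z_2\rangle$ and $j(\gamma,Z)=j'(D,Z_2)$, so the automorphy factors cancel; whether one checks this on generators or directly on the block form is immaterial, and the coset-permutation argument for invariance is the same.

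The convergence argument, however, has a genuine gap. Passing to the sum $\sum_{\ell}\left(\det R_Z[\ell]\right)^{-s/2}$ over primitive isotropic pairs is legitimate (Lemma \ref{identity} and Proposition \ref{prop: majorants} are purely algebraic and independent of this proposition, and injectivity of $\gamma\mapsto\gamma^{-1}\m{e_1&e_2}$ indeed needs no hypothesis on the number of cusps), but your domination step cannot produce the stated abscissa. On reduced representatives the bound $\det R_Z[\ell]\geq \frac{3}{4}R_Z[l]\,R_Z[m]$ is fine, yet the resulting majorant series factors as $\bigl(\sum_{l\neq 0}R_Z[l]^{-\sigma/2}\bigr)^2$: over \emph{all} nonzero $l$ this is an Epstein zeta of a rank-$(n+4)$ form and needs $\sigma>n+4$, and even if you retain the condition that $l$ and $m$ are each isotropic (a count of order $T^{(n+2)/2}$ for $R_Z[l]\leq T$), the product bound still only converges for $\sigma>n+2$. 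The abscissa $n+1$ can only come from a genuinely two-dimensional count of primitive isotropic \emph{planes} ordered by $\det R_Z[\ell]$, i.e.\ from exploiting the cross-condition $l^{t}S_1m=0$ that your factored bound throws away --- and that count is exactly what you never carry out; ``a Siegel reduction-theory count \ldots then yields convergence'' asserts the theorem rather than proving it, and the appeal to ``the estimates of \cite{eisenstein_thesis}'' does not close this, since the convergence input available there is not phrased in terms of isotropic-plane counts. The paper's own proof is much more direct: pull the $\gamma$-independent factor out to write $E(Z,s)=\left(Q_0[\textup{Im}Z]\right)^{s}\sum_{\gamma}\left(\textup{Im}(\gamma Z)_2\right)^{-s}|j(\gamma,Z)|^{-2s}$ and quote \cite[Theorem 3.1.1]{eisenstein_thesis}, which gives locally uniform convergence of $\sum_{\gamma}\left(\textup{Im}(\gamma Z)_2\right)^{-k/2}|j(\gamma,Z)|^{-k}$ for $k>2n+2$; taking $k=2\,\textup{Re}(s)$ yields precisely $\textup{Re}(s)>n+1$. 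If you wish to keep the Epstein-type route, you must prove (or cite in a precise form) the plane-counting estimate; otherwise the clean fix is this one-line reduction.
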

\begin{proof}
    Let $\gamma \in \Gamma_{S,J}$. We write
    \begin{equation*}
        \gamma = \m{*&*\\0&D}, \textup{ } D \in \textup{SL}_{2}(\mathbb{Z}),
    \end{equation*}
    (cf. \cite[Proposition 3]{krieg_integral_orthogonal}). Now, for $Z = (\omega, z, \tau) \in \mathcal{H}_S$, we have $\gamma \langle Z\rangle_2 = D \langle \tau \rangle$, where the action on the right denotes the usual action of $\textup{SL}_{2}(\mathbb{Z})$ on $\mathbb{H}$ (cf. \cite[page 115]{eisenstein_thesis}). By \cite[Lemma 3.20]{Borcherds_Products}, we have
    \begin{equation*}
        Q_0[\textup{Im}(\gamma Z)] = |j(\gamma,Z)|^{-2} Q_0[\textup{Im}Z],
    \end{equation*}
    and so
    \begin{equation*}
        \frac{Q_0[\textup{Im}(\gamma Z)]}{\textup{Im}((\gamma Z)_2)}= \frac{1}{|j(\gamma,Z)|^2} \frac{Q_0[\textup{Im}Z]}{\textup{Im}(D \tau)} = \frac{|j'(D,\tau)|^2}{|j(\gamma,Z)|^2} \frac{Q_0[\textup{Im}Z]}{\textup{Im}(Z_2)},
    \end{equation*}
    where again $j'$ denotes the usual factor of automorphy for the action of $\textup{SL}_{2}(\mathbb{Z})$ on $\mathbb{H}$. But we have $j(\gamma, Z) = j'(D,\tau)$ and hence $E(Z,s)$ is well-defined.\\\\
    The invariance under $\Gamma_{S}$ is clear. For the convergence, we can write
    \begin{equation*}
        E(Z,s) = \sum_{\gamma \in \Gamma_{S,J}\backslash \Gamma_{S}} \left(Q_0[\textup{Im}Z]\right)^{s}\left(\textup{Im} (\gamma Z)_{2}\right)^{-s}\left|j(\gamma, Z)\right|^{-2s}.
    \end{equation*}
    But by the proof of \cite[Theorem 3.1.1]{eisenstein_thesis}, the sum
    \begin{equation*}
        \sum_{\gamma \in \Gamma_{S,J} \backslash \Gamma_{S}}\left(\textup{Im} (\gamma Z)_{2}\right)^{-k/2}\left|j(\gamma, Z)\right|^{-k}
    \end{equation*}
    converges locally uniformly whenever $k>2n+2$. From this, the claim follows.
\end{proof}
We are now ready to give the main Proposition of this Section.
\begin{proposition}\label{integral_representation_dirichlet}
    Let $F,G \in S_k(\Gamma_S)$. For $Z \in \mathcal{H}_S$ and $s \in \mathbb{C}$ with $\textup{Re}(s) > n+2$, we have
    \begin{equation*}
        \langle F(Z)E(Z,s), G(Z)\rangle = \frac{1}{\#\textup{SO}(S;\mathbb{Z})}(4\pi)^{-(s+k-n-1)} \Gamma(s+k-n-1) \mathcal{D}_{F,G}(s+k-n-1),
    \end{equation*}
    where $\langle \textup{ }, \textup{ }\rangle$ denotes the inner product of Definition \ref{inner_product_orthogonal}, and
    \begin{equation*}
        \textup{SO}(S;\mathbb{Z}):= \{g \in \textup{SL}_{n}(\mathbb{Z}) \mid g^{t}Sg = S\},
    \end{equation*}
    which is finite.
\end{proposition}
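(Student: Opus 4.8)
The plan is to prove the identity by the standard unfolding method. First I would write the left-hand side explicitly as
\[
\langle F(Z)E(Z,s), G(Z)\rangle = \int_{\mathcal{Q}} F(Z)\overline{G(Z)}\,E(Z,s)\,(Q_0[\textup{Im}Z])^k\,\textup{d}^*Z,
\]
over a fundamental domain $\mathcal{Q}$ for $\Gamma_S$. The key observation is that the factor $\Phi(Z) := F(Z)\overline{G(Z)}(Q_0[\textup{Im}Z])^k$ is $\Gamma_S$-invariant: combining the weight-$k$ transformation law of $F$ and $G$ with the identity $Q_0[\textup{Im}(\gamma Z)] = |j(\gamma,Z)|^{-2}Q_0[\textup{Im}Z]$ recorded in the proof of the preceding proposition, the automorphy factors cancel since $j^k\overline{j}^k = |j|^{2k}$. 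Since $\textup{d}^*Z$ is $G_{\mathbb{R}}^0$-invariant and the Eisenstein seed $f(Z) := (Q_0[\textup{Im}Z]/\textup{Im}(Z_2))^s$ is $\Gamma_{S,J}$-invariant (also shown there), I can unfold the sum defining $E$ against $\Phi$ to obtain
\[
\langle F E(\cdot,s), G\rangle = \int_{\Gamma_{S,J}\backslash \mathcal{H}_S} \Phi(Z)\,f(Z)\,\textup{d}^*Z.
\]

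Next I would pass from $\Gamma_{S,J}$ to its subgroup $\Gamma_{S,J}^{\bullet} = \iota(J_S)$, whose fundamental domain $\mathcal{F}^{J}$ is given in Proposition~\ref{fundamental domain}. The quotient $\Gamma_{S,J}/\Gamma_{S,J}^{\bullet}$ is canonically identified with $\hbox{SO}(S;\mathbb{Z})$, namely the extra Levi directions that rotate the $z$-variable while fixing $Z_2$ and $Q_0[\textup{Im}Z]$; hence the integrand is invariant under this finite group and the unfolded integral equals $\frac{1}{\#\hbox{SO}(S;\mathbb{Z})}\int_{\mathcal{F}^{J}}\Phi\, f\,\textup{d}^*Z$. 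This is exactly where the normalising constant is produced. Writing $\textup{d}^*Z = (Q_0[\textup{Im}Z])^{-(n+2)}\textup{d}Z$, the combined exponent of $Q_0[\textup{Im}Z]$ becomes $k+s-n-2$. I then insert the Fourier-Jacobi expansions $F = \sum_m \phi_m e^{2\pi i m\omega}$ and $G = \sum_{m'} \psi_{m'} e^{2\pi i m'\omega}$ and integrate over $x_1 = \textup{Re}(\omega) \in [-1/2,1/2]$; orthogonality of the exponentials forces $m = m'$ and leaves a factor $e^{-4\pi m y_1}$.

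It then remains to carry out the $y_1$-integral. With $Q_0[\textup{Im}Z] = y_1 y_2 - Q[v]$ and the constraint $y_1 > Q[v]/y_2$ coming from $\mathcal{F}^{J}$, I substitute $t = Q_0[\textup{Im}Z]$, so that $y_1 = (t+Q[v])/y_2$ and $\textup{d}y_1 = \textup{d}t/y_2$; the integral over $t \in (0,\infty)$ is a Gamma integral with parameter $4\pi m/y_2$, producing $\Gamma(s+k-n-1)(4\pi m)^{-(s+k-n-1)}y_2^{\,s+k-n-2}$ together with the factor $e^{-2\pi m S[v]/y_2}$. After multiplying by the leftover $y_2^{-s}$ from the seed, the remaining $(z,\tau)$-integral over $\mathcal{F}$ carries the integrand $\phi_m\overline{\psi_m}\,y_2^{\,k-n-2}e^{-2\pi m S[v]/y_2}$, which is precisely that of the Fourier-Jacobi Petersson product $\langle \phi_m, \psi_m\rangle$ at index $m$ (Definition~\ref{inner product_fourier-jacobi}, taking $U = J_S$ since the product is $U$-independent). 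Summing over $m$ then yields $\frac{1}{\#\hbox{SO}(S;\mathbb{Z})}(4\pi)^{-(s+k-n-1)}\Gamma(s+k-n-1)\sum_m \langle \phi_m,\psi_m\rangle\, m^{-(s+k-n-1)}$, which is the claimed expression.

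The main obstacle I anticipate is not any single computation but the bookkeeping that makes the unfolding and the measure identifications rigorous: justifying the interchange of the Eisenstein sum and the Fourier-Jacobi sums with the integral on the range $\textup{Re}(s) > n+2$, which is exactly the half-plane where $\mathcal{D}_{F,G}(s+k-n-1)$ converges by Lemma~\ref{convergence_dirichlet} together with the bound $\langle\phi_m,\psi_m\rangle = O(m^k)$ established there; and the careful alignment of the index-$m$ Jacobi conventions (the lattice form $m\sigma$, hence the exponential $e^{-2\pi m\sigma(v,v)/y_2}$ and the weight factor $y_2^k$) with the terms generated by the $y_1$-integral. The identification $\Gamma_{S,J}/\Gamma_{S,J}^{\bullet} \cong \hbox{SO}(S;\mathbb{Z})$ must likewise be pinned down precisely, as it is solely responsible for the constant $1/\#\hbox{SO}(S;\mathbb{Z})$.
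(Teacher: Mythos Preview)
Your proposal is correct and follows essentially the same route as the paper: unfold the Eisenstein sum from $\Gamma_S$ to $\Gamma_{S,J}$, pass to $\Gamma_{S,J}^{\bullet}$ at the cost of the index $[\Gamma_{S,J}:\Gamma_{S,J}^{\bullet}]=\#\hbox{SO}(S;\mathbb{Z})$, use the fundamental domain of Proposition~\ref{fundamental domain}, integrate out $x_1$ to diagonalise the Fourier--Jacobi sum, perform the $y_1$-integral as a Gamma integral, and recognise what remains as the Jacobi Petersson product. The only cosmetic difference is your substitution $t=Q_0[\textup{Im}Z]=y_1y_2-Q[v]$ versus the paper's $t=y_1-S[v]/(2y_2)$, which differ by the harmless factor $y_2$.
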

\begin{proof}
Let $I(s) := \langle F(Z)E(Z,s)\textup{, } G(Z)\rangle$. If we denote by $\mathcal{Q}$ a fundamental domain for the action of $\Gamma_{S}$ on $\mathcal{H_S}$, by using the standard unfolding argument, we have for $\textup{Re}(s)>n+1$
\begin{align*}
I(s) &= \int_{\mathcal{Q}} F(Z) \overline{G(Z)} \sum_{\gamma \in \Gamma_{S,J} \setminus \Gamma_S} \left(\frac{Q_0[\textup{Im}(\gamma Z)]}{\textup{Im}((\gamma Z)_2)}\right)^s \left(Q_0[\textup{Im}Z] \right)^k \textup{d}^{*}Z\\
&=\frac{1}{[\Gamma_{S,J} : \Gamma_{S,J}^{\bullet}]}\int_{\mathcal{F}^{J}} F(Z) \overline{G(Z)}\left(\frac{Q_0[Y]}{\textup{Im}(\tau)}\right)^s \left(Q_0[Y] \right)^{k-n-2} \textup{d}X\textup{d}Y,
\end{align*}
where $\mathcal{F}^J$ is a fundamental domain for the action of $\Gamma_{S,J}^{\bullet}=\iota(J_S)$ on $\mathcal{H}_{S}$ and $Z=(\omega, z, \tau)= X+iY$. We note here 
\begin{equation*}
    [\Gamma_{S,J} : \Gamma_{S,J}^{\bullet}] = \#\textup{SO}(S;\mathbb{Z}) < \infty,
\end{equation*}
because $S$ is positive definite (cf. \cite[Section 1.7]{eisenstein_thesis}). Hence, from Proposition \ref{fundamental domain}, with the same notation as there, we get

\begin{equation*}
I(s) = \frac{1}{\#\textup{SO}(S;\mathbb{Z})}\int_{\mathcal{F}} \int_{y_1y_2 - \frac{1}{2}S[v]>0} \int_{-1/2 \leq x_1 \leq 1/2} F(Z) \overline{G(Z)}  
 y_2^{-s} \left( y_1y_2 - \frac{1}{2}S[v] \right)^{k-n-2+s} \textup{d}X\textup{d}Y,
\end{equation*}
where $\mathcal{F}$ is a fundamental domain of the action of $J_{S}$ on $\mathbb{H} \times \mathbb{C}^n$. We now write 
\begin{equation*}
    \displaystyle{F(Z)= \sum_{m=1}^{\infty} \phi_m(\tau,z) e^{2 \pi i m \omega}}, \,\,\, \displaystyle{G(Z)=\sum_{m=1}^{\infty}\psi_m(\tau,z)e^{2\pi i m \omega}}.
\end{equation*}
Using the fact that for integers $m_1, m_2$, we have
\begin{equation*}
\int_{-1/2}^{1/2} e^{2\pi i (m_1-m_2)x_1} \textup{d}x_1 =
\begin{cases}
    1 & \textup{ if } m_1=m_2\\
    0 & \textup{ otherwise}
\end{cases},
\end{equation*}
we get
\begin{multline*}
I(s)= \frac{1}{\#\textup{SO}(S;\mathbb{Z})}\int_{\mathcal{F}} \int_{y_1y_2 - \frac{1}{2}S[v]>0} \sum_{m=1}^{\infty} \phi_m(\tau,z)\overline{\psi_m(\tau,z)} e^{-4\pi m y_1} y_2^{-s} \times \\\times \left( y_1y_2 - \frac{1}{2}S[v] \right)^{k-n-2+s} \textup{d}X\textup{d}Y.
\end{multline*}
We now set $t:=y_1 - \frac{S[v]}{2y_2}$, or equivalently $y_2t = y_1y_2 -\frac{1}{2}S[v]$. We then get
\begin{equation*}
I(s)=\frac{1}{\#\textup{SO}(S;\mathbb{Z})}\int_{\mathcal{F}} \int_{t=0}^{\infty} \sum_{m=1}^{\infty} \phi_m(\tau,z)\overline{\psi_m(\tau,z)} e^{-4\pi m t} e^{-2\pi m \frac{S[v]}{y_2}} y_2^{-s} \left( ty_2 \right)^{k-n-2+s} \textup{d}t \textup{d}u \textup{d}v \textup{d}x_2 \textup{d}y_2.
\end{equation*}
But
\begin{equation*}
\int_{t=0}^{\infty} e^{-4\pi m t} t^{k-n-2+s} \textup{d}t= \Gamma(s+k-n-1) (4\pi)^{-(s+k-n-1)} m^{-(s+k-n-1)}.
\end{equation*}
Moreover, in this case, the inner product of Definition \ref{inner product_fourier-jacobi} reads as:
\begin{equation*}
\langle\phi_m,\psi_m\rangle = \int_{\mathcal{F}} \phi_m(\tau,z) \overline{\psi_m(\tau,z)} y_2^{k-n-2} e^{-\frac{2 \pi m}{y_2} S[v]} \textup{d}u \textup{d}v \textup{d}x_2 \textup{d}y_2.
\end{equation*}
Putting all the above together, we obtain
\begin{equation*}
I(s) = \frac{1}{\#\textup{SO}(S;\mathbb{Z})}(4\pi)^{-(s+k-n-1)} \Gamma(s+k-n-1) \mathcal{D}_{F,G}(s+k-n-1),
\end{equation*}
or equivalently
\begin{equation*}
\langle F(Z)E(Z,s), G(Z)\rangle = \frac{1}{\#\textup{SO}(S;\mathbb{Z})}(4\pi)^{-(s+k-n-1)} \Gamma(s+k-n-1) \mathcal{D}_{F,G}(s+k-n-1),
\end{equation*}
as claimed.
\end{proof}
\section{Manipulation of the Eisenstein Series}\label{eisenstein_series}
It is now clear from the above that the analytic properties of the Dirichlet series of interest reduce to the ones of the Klingen Eisenstein series, as given in Definition \ref{eisentein_defn}. This Section is devoted to writing this Eisenstein series in the form of an Epstein zeta function, similar to \cite[equation (7)]{krieg1991}. In our case, because of the form of the Eisenstein series, we cannot use the method of Krieg in \cite{krieg1991} with the minors of the determinant. It turns out we can write the Eisenstein series in such a form, provided that the number of one-dimensional cusps is $1$. 
\begin{defn}[Definition 1.6.17, \cite{eisenstein_thesis}]\label{one_dimensional_cusps}
    The set of $\Gamma_S$-orbits of one-dimensional cusps is defined by
    \begin{equation*}
        \mathcal{C}^{1}(\Gamma_S) := \left\{\Gamma_S W \mid W \textup{ is a two-dimensional isotropic plane in } V_1\right\}.
    \end{equation*}
    A two-dimensional isotropic plane, or isotropic plane, is defined by two linearly independent vectors $g,h \in V_1$. We normalise $g,h$ such that $g,h \in L_1^{*}$ and such that $\gcd{(S_1g)}=\gcd{(S_1h)}=1$. The isotropy condition means $S_1\left[\m{g&h}\right]=0$.
\end{defn}
We also have the following Definition of the majorants for $S_1$.
\begin{defn}\label{majorants_defn}
    The space of majorants for $S_1$ is defined by
    \begin{equation*}
        \mathfrak{H} := \{R \in M_{n+4}(\mathbb{R}) \mid R = R^t>0, \ RS_1^{-1} R = S_1\}.
    \end{equation*}
\end{defn}
The main Proposition of the Section is the following:
\begin{proposition}\label{main_proposition_eisenstein}
    Let $S$ be such that $\#\mathcal{C}^{1}(\Gamma_S) = 1$. Then, for each $Z \in \mathcal{H}_S$, there is $R_Z \in \mathfrak{H}$ such that
    \begin{equation*}
        E(Z,s) = \sum_{\gamma \in \Gamma_{S,J}\backslash\Gamma_S} \left(\frac{\textup{Im} (\gamma Z)_2}{Q_0[\textup{Im}(\gamma Z)]}\right)^{-s} = \sum_{\ell \in X/\textup{GL}_{2}(\mathbb{Z})} \left(\det(R_{Z}[\ell])\right)^{-s/2},
\end{equation*}
where  
\begin{equation*}
    X = \left\{\m{l&m} \mid l,m \in \mathbb{Z}^{n+4}, \m{l&m} \textup{ primitive},\  S_1\left[\m{l&m}\right]=0\right\}.
\end{equation*}
\\Here, a matrix being primitive means that its elementary divisors are all $1$ (see \cite[Section 3]{Shimura_Euler_Product}).
\end{proposition}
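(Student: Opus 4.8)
The plan is to establish a dictionary between the two index sets appearing in the two expressions for $E(Z,s)$: the coset space $\Gamma_{S,J}\backslash\Gamma_S$ on the left, and the set $X/\textup{GL}_2(\mathbb{Z})$ of primitive isotropic rank-$2$ sublattices on the right. The conceptual core is that a one-dimensional cusp is exactly a two-dimensional isotropic plane, and $\Gamma_{S,J}$ is by Definition \ref{parabolic} the stabiliser of the isotropic plane spanned by $e_1,e_2$. So the hypothesis $\#\mathcal{C}^1(\Gamma_S)=1$ says $\Gamma_S$ acts transitively on the set of all such planes, which is precisely $X/\textup{GL}_2(\mathbb{Z})$ (the $\textup{GL}_2(\mathbb{Z})$-quotient collapsing different bases $\m{l&m}$ of the same plane, and primitivity ensuring we parametrise planes rather than scaled sublattices). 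The orbit-stabiliser theorem then gives a bijection $\Gamma_{S,J}\backslash\Gamma_S \longleftrightarrow X/\textup{GL}_2(\mathbb{Z})$, namely $\Gamma_{S,J}\gamma \mapsto \gamma^{-1}\cdot\{e_1,e_2\}$.

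\medskip
First I would construct the majorant $R_Z$. For fixed $Z\in\mathcal{H}_S$, the factor-of-automorphy computation already recorded in the proof of the proposition following Definition \ref{eisentein_defn} shows $Q_0[\textup{Im}(\gamma Z)] = |j(\gamma,Z)|^{-2}Q_0[\textup{Im}Z]$ and $\textup{Im}((\gamma Z)_2) = |j'(D,\tau)|^{-2}\textup{Im}(Z_2)$ with $j(\gamma,Z)=j'(D,\tau)$. The standard construction (as in Borcherds \cite{Borcherds_Products}, Lemma 3.20) attaches to $Z$ a positive majorant $R_Z\in\mathfrak{H}$ of $S_1$ so that, for an isotropic basis $\ell=\m{l&m}$ representing the plane $\gamma^{-1}\{e_1,e_2\}$, the $2\times 2$ Gram matrix $R_Z[\ell]=\ell^t R_Z\ell$ encodes exactly the pair of quantities $\textup{Im}((\gamma Z)_2)$ and $Q_0[\textup{Im}(\gamma Z)]$. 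The task is to verify that
\begin{equation*}
\det(R_Z[\ell]) = \left(\frac{\textup{Im}((\gamma Z)_2)}{Q_0[\textup{Im}(\gamma Z)]}\right)^{2},
\end{equation*}
so that $(\det R_Z[\ell])^{-s/2}$ reproduces the summand $\bigl(Q_0[\textup{Im}(\gamma Z)]/\textup{Im}((\gamma Z)_2)\bigr)^{s}$. I would check this on the base point (the identity coset, where $\ell=\m{e_1&e_2}$ and the Gram matrix is read off directly from $R_Z$) and then propagate it along the orbit using the transformation laws above, noting that the $\textup{GL}_2(\mathbb{Z})$-ambiguity in the basis $\ell$ changes $R_Z[\ell]$ by $R_Z[\ell U]=U^t R_Z[\ell] U$, which leaves $\det$ invariant, so the summand is genuinely a function on $X/\textup{GL}_2(\mathbb{Z})$.

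\medskip
The key steps, in order, are: (i) identify one-dimensional cusps with rank-$2$ primitive isotropic sublattices and hence with $X/\textup{GL}_2(\mathbb{Z})$; (ii) use $\#\mathcal{C}^1(\Gamma_S)=1$ together with $\Gamma_{S,J}=\textup{Stab}_{\Gamma_S}(\langle e_1,e_2\rangle)$ and orbit-stabiliser to get the bijection of index sets; (iii) build $R_Z$ and prove the determinant identity on the base point; (iv) transport it across the orbit via the automorphy factors; (v) confirm well-definedness under $\textup{GL}_2(\mathbb{Z})$. I expect the main obstacle to be step (iii)--(iv): pinning down the precise majorant $R_Z$ and verifying that $\det(R_Z[\ell])$ equals the square of the ratio $\textup{Im}((\gamma Z)_2)/Q_0[\textup{Im}(\gamma Z)]$, since this requires making the Borcherds majorant construction fully explicit in the coordinates $(\omega,z,\tau)$ and matching the normalisation of the isotropic basis vectors (the conditions $\gcd(S_1 g)=\gcd(S_1 h)=1$ in Definition \ref{one_dimensional_cusps}) against the standard basis $e_1,e_2$ fixed by $\Gamma_{S,J}$. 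A subtlety to watch is that the transitivity hypothesis is exactly what guarantees every $\ell\in X$ actually arises as $\gamma^{-1}\{e_1,e_2\}$ for some $\gamma\in\Gamma_S$; without $\#\mathcal{C}^1(\Gamma_S)=1$ the right-hand sum would run over several $\Gamma_S$-orbits and the identification with a single Eisenstein series would fail, which is precisely why the hypothesis is imposed.
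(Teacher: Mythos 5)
Your first half---the index-set bijection---is essentially the paper's own argument: the paper also sends $\Gamma_{S,J}\gamma \longmapsto \gamma^{-1}\m{e_1&e_2}$, uses the hypothesis $\#\mathcal{C}^{1}(\Gamma_S)=1$ only for surjectivity, and your orbit-stabiliser packaging is a legitimate condensation of it (though note the paper must still pass from transitivity on rational isotropic planes to transitivity on primitive \emph{integral} bases, which it does via Shimura's lemma on primitive matrices; your step (i) absorbs this, correctly, into the statement that a primitive matrix spans the lattice $W \cap L_1$).

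The genuine gap is in your steps (iii)--(iv), the majorant half. The two transformation laws you propose to propagate with, $Q_0[\textup{Im}(\gamma Z)] = |j(\gamma,Z)|^{-2}Q_0[\textup{Im}Z]$ and $\textup{Im}((\gamma Z)_2) = |j'(D,\tau)|^{-2}\textup{Im}(Z_2)$ with $j=j'$, are valid only for $\gamma \in \Gamma_{S,J}$: the second presupposes $(\gamma Z)_2 = D\langle Z_2\rangle$, which is precisely the defining property of parabolic elements (in the paper these identities serve only to show the Eisenstein summand is constant on cosets). For general $\gamma \in \Gamma_S$ the quantity $(\gamma Z)_2$ is not a M\"obius transform of $Z_2$, so checking the determinant identity at the identity coset and invoking these laws cannot reach the other cosets. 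What is actually needed is equivariance of the majorant assignment, $R_{\gamma\langle Z\rangle} = R_Z[\gamma^{-1}]$, combined with either the identity-coset check at every $Z$, or a check of all cosets at one base point. The paper does the latter: it takes $I=(i,0,\dots,0,i)^{t}$ and the explicit majorant $R_I = \textup{diag}(1,1,S,1,1)$, and verifies $\bigl(\textup{Im}((\gamma I)_2)/Q_0[\textup{Im}(\gamma I)]\bigr)^2 = \det R_I[\ell]$ for \emph{all} $\gamma \in G_{\mathbb{R}}$ (Lemma \ref{identity}), a computation that genuinely uses the quadratic relations among the rows of an orthogonal matrix (Lemma \ref{lem: conditions}) to evaluate the Gram determinant; it then defines $R_Z := R_I[\delta^{-1}]$ where $\delta\langle I\rangle = Z$ (Proposition \ref{prop: majorants}) and---the step entirely absent from your plan---proves this is well defined, i.e.\ that the stabiliser of $I$ in $G_{\mathbb{R}}$ preserves $R_I$ under the bracket action (Lemma \ref{well-defined}), which is a further substantial computation. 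Your appeal to ``the standard construction'' plus reading off the Gram matrix at the identity coset leaves both of these ingredients---the all-$\gamma$ base-point identity and the equivariance/well-definedness of $Z \mapsto R_Z$---unproven, and they are the technical heart of the proposition; the obstacle is not merely ``matching normalisations'' as you suggest.
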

The rest of the Section is devoted to proving this Proposition.  We start with two Lemmas regarding the elements of $G_{\mathbb{R}}$, i.e., the special orthogonal group attached to $S_1$.
\begin{lemma}\label{inverse}
   Let $\gamma \in G_{\mathbb{R}}$ and write $\gamma = \m{*&l&m}^{t}$ with $l,m \in \mathbb{R}^{n+4}$.
Then
\begin{equation*}
    \gamma^{-1} = \m{S_1^{-1}m & S_1^{-1}l&*}.
\end{equation*}
\end{lemma}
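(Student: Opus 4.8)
The plan is to extract a closed formula for $\gamma^{-1}$ directly from the defining relation of the orthogonal group $G_{\mathbb{R}}$, namely $\gamma^t S_1 \gamma = S_1$. Multiplying this identity on the right by $\gamma^{-1}$ and then on the left by $S_1^{-1}$ yields at once
\[
\gamma^{-1} = S_1^{-1}\gamma^t S_1.
\]
This reduces the lemma to reading off the first two columns of the right-hand side, so no global structure of $\gamma$ is needed beyond the position of $l$ and $m$.

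To carry this out I would compute $\gamma^{-1}e_i = S_1^{-1}\gamma^t(S_1 e_i)$ for $i=1,2$, where $e_1,\dots,e_{n+4}$ denote the standard basis vectors of $\mathbb{R}^{n+4}$. The key structural input is the nested anti-diagonal shape $S_1 = \m{&&1\\&S_0&\\1&&}$ (with $S_0$ itself having $1$'s in its own corners), from which one reads off $S_1 e_1 = e_{n+4}$ and $S_1 e_2 = e_{n+3}$. On the other side, the hypothesis $\gamma = \m{*&l&m}^t$ says exactly that the last two rows of $\gamma$ are $l^t$ and $m^t$, equivalently that the last two columns of $\gamma^t$ are $l$ and $m$; hence $\gamma^t e_{n+4} = m$ and $\gamma^t e_{n+3} = l$.

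Combining the two computations gives the first column of $\gamma^{-1}$ as $S_1^{-1}\gamma^t e_{n+4} = S_1^{-1}m$ and the second column as $S_1^{-1}\gamma^t e_{n+3} = S_1^{-1}l$, which is precisely the asserted shape $\gamma^{-1} = \m{S_1^{-1}m & S_1^{-1}l & *}$, with the remaining $n+2$ columns left unspecified. The argument involves no analytic ingredient whatsoever, so the only point genuinely demanding care is the index bookkeeping: verifying that the anti-diagonal pattern of $S_1$ sends $e_1,e_2$ to $e_{n+4},e_{n+3}$, and that these then align under $\gamma^t$ with the columns $m$ and $l$ in the correct order (the apparent swap of $l$ and $m$ being exactly the effect of the anti-diagonal $S_1$). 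Once the transpose conventions are pinned down consistently, the identity is immediate.
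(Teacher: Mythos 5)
Your proof is correct and rests on the same mechanism as the paper's: deriving $\gamma^{-1} = S_1^{-1}\gamma^{t}S_1$ from the defining relation $\gamma^{t}S_1\gamma = S_1$ and reading off the relevant columns. The paper simply carries this out as a full block computation of $\gamma^{-1}$ (with $\gamma$ partitioned into scalar, vector and $(n+2)\times(n+2)$ blocks), whereas you extract only the two needed columns by evaluating on $e_1,e_2$ and using $S_1e_1=e_{n+4}$, $S_1e_2=e_{n+3}$ — a tidier route to the same identity.
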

\begin{proof}
    The proof follows from the fact that if
    \begin{equation*}
        \gamma = \m{\alpha & a^t &\beta \\ b&A&c\\ \gamma&d^t&\delta} \in G_{\mathbb{R}},
    \end{equation*}
    then from the relation $S_1[\gamma] = S_1$, we have
    \begin{equation*}
        \gamma^{-1} = \m{\delta & c^tS_0 &\beta \\ S_0^{-1}d&S_0^{-1}A^{t}S_0&S_0^{-1}a\\ \gamma&b^tS_0&\alpha}.
    \end{equation*}
\end{proof}
\begin{lemma}\label{lem: conditions}
Let $\gamma \in G_{\mathbb{R}}$ and write $\gamma = \m{*\\l^{t}\\m^{t}} = \m{*&*&*&*&*\\l_1&l_2&x^{t}&l_{n+3}&l_{n+4}\\m_1&m_2&y^t&m_{n+3}&m_{n+4}}$. Then, if
\begin{equation*}
    \ell := S_1^{-1}\m{m&l} = \m{m_{n+4}&l_{n+4}\\m_{n+3}&l_{n+3}\\-S^{-1}y&-S^{-1}x\\m_2&l_2\\m_1&l_1},
\end{equation*}
we have $S_1[\ell] = 0$.
\end{lemma}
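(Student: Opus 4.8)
The plan is to recognise the two columns of $\ell$ as the first two columns of $\gamma^{-1}$ and then exploit the fact that $\gamma^{-1}$ again lies in $G_{\mathbb{R}}$. Concretely, writing $\gamma = \m{*\\l^t\\m^t}$, Lemma \ref{inverse} gives $\gamma^{-1} = \m{S_1^{-1}m & S_1^{-1}l & *}$, so the matrix $\ell = S_1^{-1}\m{m&l}$ is exactly the submatrix formed by the first two columns of $\gamma^{-1}$. This is the only place the explicit description of $\gamma^{-1}$ is needed.

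Since $G_{\mathbb{R}}$ is a group, $\gamma^{-1}\in G_{\mathbb{R}}$ and hence $S_1[\gamma^{-1}] = (\gamma^{-1})^t S_1 \gamma^{-1} = S_1$. The $(i,j)$ entry of $S_1[\gamma^{-1}]$ is $c_i^t S_1 c_j$, where $c_i$ denotes the $i$th column of $\gamma^{-1}$. Restricting to $i,j\in\{1,2\}$, the upper-left $2\times 2$ block of $S_1[\gamma^{-1}]$ is precisely $\ell^t S_1 \ell = S_1[\ell]$. On the other hand, from $S_1 = \m{&&1\\&S_0&\\1&&}$ one reads off that the upper-left $2\times 2$ block of $S_1$ vanishes: the first two rows of $S_1$ carry their only nonzero entries in the last two columns. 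Comparing the two upper-left blocks yields $S_1[\ell]=0$, as desired.

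There is essentially no analytic obstacle here; the whole content is the bookkeeping identity $\gamma S_1^{-1}\gamma^t = S_1^{-1}$ (equivalently $(\gamma^{-1})^t S_1 \gamma^{-1} = S_1$, obtained by inverting $\gamma^t S_1\gamma = S_1$) together with the explicit shape of $S_1$. The one point demanding care is keeping the index conventions straight, i.e. checking that the vectors $l,m$ labelling the last two rows of $\gamma$ really do produce $S_1^{-1}m, S_1^{-1}l$ as the first two columns of $\gamma^{-1}$, which is exactly Lemma \ref{inverse}. If one prefers to sidestep Lemma \ref{inverse} altogether, the same conclusion follows directly: using $\gamma S_1^{-1}\gamma^t = S_1^{-1}$ and that the rows of $\gamma$ in positions $n+3,n+4$ are $l^t,m^t$, one reads off the $(n+3,n+3)$, $(n+4,n+4)$ and $(n+3,n+4)$ entries of both sides to get $l^t S_1^{-1} l = m^t S_1^{-1} m = l^t S_1^{-1} m = 0$ (the corresponding entries of $S_1^{-1}$ being zero). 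These three scalars are exactly the entries of the symmetric $2\times 2$ matrix $S_1[\ell] = \m{m&l}^t S_1^{-1}\m{m&l}$, since $\ell^t S_1 \ell = \m{m&l}^t (S_1^{-1})^t S_1 S_1^{-1}\m{m&l} = \m{m&l}^t S_1^{-1}\m{m&l}$.
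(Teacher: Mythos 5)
Your proposal is correct and takes essentially the same approach as the paper: both identify $\ell$ as the first two columns of $\gamma^{-1}$ via Lemma \ref{inverse} and then combine $S_1[\gamma^{-1}] = S_1$ with the vanishing of the upper-left $2\times 2$ block of $S_1$ (i.e.\ the isotropy of the span of $e_1,e_2$), the paper phrasing this as $S_1[\gamma^{-1}M] = S_1[M] = 0$ for $M = \m{e_1 & e_2}$. Your side remark giving an alternative via $\gamma S_1^{-1}\gamma^{t} = S_1^{-1}$, which avoids Lemma \ref{inverse} entirely, is also valid but is not the paper's route.
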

\begin{proof}
Let $M = \m{e_1&e_2}$, where $e_1,e_2$ are the two standard basis vectors of $\mathbb{R}^{n+4}$. Then
\begin{equation*}
    S_1[\gamma^{-1}M] = S_1[\gamma^{-1}][M] = S_1[M] = 0,
\end{equation*}
because $\gamma^{-1} \in G_\mathbb{R}$ and the subspace generated by $M$ is totally isotropic. The result then follows by Lemma \ref{inverse} because $\ell$ is just the matrix formed by the first two columns of $\gamma^{-1}$, i.e. $\gamma^{-1}M$.
\end{proof}
Now, for any $Z \in \mathcal{H}_S$, we want to choose a specific majorant for $S_1$, so that the terms of the Eisenstein series take the form in Proposition \ref{main_proposition_eisenstein}. We start with defining the majorant for the element $I := (i,0\cdots, 0, i)^{t} \in \mathcal{H}_S$.
\begin{lemma}\label{identity}
    Let $I := (i, 0,\cdots,0, i)^{t}$ and $R_{I} := \textup{diag}(1,1,S,1,1)$. Then, if $\gamma \in G_{\mathbb{R}}$, with $\gamma = \m{*&l&m}^{t}$, we have
\begin{equation*}
    \left(\frac{\textup{Im} ((\gamma I)_2)}{Q_0[\textup{Im}(\gamma I)]}\right)^2 = \det{(R_I[\ell])},
\end{equation*}
where $\ell = S_1^{-1}\m{m&l}$, as in Lemma \ref{lem: conditions}.
\end{lemma}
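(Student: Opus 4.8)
The plan is to compute both sides of the asserted identity explicitly in terms of the entries of the last two rows of $\gamma$, and then to recognise the result as an instance of the two-square (Lagrange) identity, in which the isotropy relations supplied by Lemma \ref{lem: conditions} are exactly what collapse the $(n+4)$-dimensional quadratic forms into two-dimensional ones. As a preliminary I would note that $R_I = \textup{diag}(1,1,S,1,1)$ genuinely lies in $\mathfrak{H}$ (see Definition \ref{majorants_defn}): symmetry and positivity are immediate, and $R_IS_1^{-1}R_I = S_1$ follows from the block identity $\textup{diag}(1,S,1)\,S_0^{-1}\,\textup{diag}(1,S,1) = S_0$, which is a one-line computation using the anti-diagonal shape of $S_0^{-1}$.

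First I would record the data at the base point. Since $\textup{Im}(I) = (1,0,\dots,0,1)^t$, a direct multiplication gives $S_0[\textup{Im}I] = 2$, hence $Q_0[\textup{Im}I] = 1$, and likewise $S_0[I] = -2$. Applying the relation $Q_0[\textup{Im}(\gamma Z)] = |j(\gamma,Z)|^{-2}Q_0[\textup{Im}Z]$ (already used above, \cite[Lemma 3.20]{Borcherds_Products}) at $Z = I$ yields $Q_0[\textup{Im}(\gamma I)] = |j(\gamma,I)|^{-2}$.

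Next I would compute the coordinate $(\gamma I)_2$ from the defining action formula. Writing $\gamma$ in the block form $\left(\begin{smallmatrix}\alpha & a^t & \beta \\ b & A & c \\ \gamma' & d^t & \delta\end{smallmatrix}\right)$, the relation $S_0[I] = -2$ reduces the numerator to $b + AI + c$ and the denominator to $j(\gamma,I) = m_1 + d^tI + m_{n+4}$. Matching this block form against the row description $\gamma = \left(\begin{smallmatrix} * \\ l^t \\ m^t\end{smallmatrix}\right)$ of Lemma \ref{lem: conditions} identifies the entries I need: the last row of $A$ together with $b_{n+2},c_{n+2}$ give $l_1,l_2,l_{n+3},l_{n+4}$, while the last row of $\gamma$ gives $m_1,m_2,m_{n+3},m_{n+4}$. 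As $I$ has only its first and last entries equal to $i$, the last coordinate of the numerator becomes $(l_1+l_{n+4}) + i(l_2+l_{n+3})$ and the denominator becomes $(m_1+m_{n+4}) + i(m_2+m_{n+3})$. Setting $P_1 = l_1+l_{n+4}$, $Q_1 = l_2+l_{n+3}$, $P_2 = m_1+m_{n+4}$, $Q_2 = m_2+m_{n+3}$ and taking imaginary parts of the quotient gives
\[
\textup{Im}(\gamma I)_2 = \frac{P_2Q_1 - P_1Q_2}{P_2^2+Q_2^2}, \qquad |j(\gamma,I)|^2 = P_2^2 + Q_2^2,
\]
so that $\textup{Im}(\gamma I)_2 / Q_0[\textup{Im}(\gamma I)] = P_2Q_1 - P_1Q_2$.

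It then remains to show $\det R_I[\ell] = (P_2Q_1-P_1Q_2)^2$. Expanding the $2\times 2$ Gram matrix $R_I[\ell] = \ell^t R_I \ell$ with the explicit $\ell$ of Lemma \ref{lem: conditions}, the two diagonal entries come out as $m_1^2+m_2^2+m_{n+3}^2+m_{n+4}^2 + y^tS^{-1}y$ and its $(l,x)$-analogue, and the off-diagonal entry as $m_1l_1+m_2l_2+m_{n+3}l_{n+3}+m_{n+4}l_{n+4} + y^tS^{-1}x$. The crucial input is $S_1[\ell]=0$ from Lemma \ref{lem: conditions}; unpacking it into the three scalar relations $m^tS_1^{-1}m = l^tS_1^{-1}l = m^tS_1^{-1}l = 0$ evaluates the quadratic terms as $y^tS^{-1}y = 2(m_1m_{n+4}+m_2m_{n+3})$, $x^tS^{-1}x = 2(l_1l_{n+4}+l_2l_{n+3})$ and $y^tS^{-1}x = m_1l_{n+4}+m_{n+4}l_1+m_2l_{n+3}+m_{n+3}l_2$. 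Substituting, the diagonal entries become the perfect squares $P_2^2+Q_2^2$ and $P_1^2+Q_1^2$ and the off-diagonal entry becomes $P_1P_2+Q_1Q_2$, so that $\det R_I[\ell] = (P_1^2+Q_1^2)(P_2^2+Q_2^2) - (P_1P_2+Q_1Q_2)^2$, which by the two-square identity equals $(P_2Q_1-P_1Q_2)^2$, matching the left-hand side. The main obstacle is precisely the bookkeeping here: one must see that the isotropy of the plane $\langle e_1,e_2\rangle$, encoded as $S_1[\ell]=0$, is exactly the relation turning the $(n+4)$-variable Gram matrix of $R_I$ against $\ell$ into the $2\times 2$ Gram matrix of the real vectors $(P_1,Q_1)$ and $(P_2,Q_2)$, after which Lagrange's identity finishes the proof.
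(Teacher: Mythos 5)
Your proof is correct and follows essentially the same route as the paper's: compute $\textup{Im}\,(\gamma I)_2$ and $Q_0[\textup{Im}(\gamma I)]$ explicitly from the action formula, expand the entries of $R_I[\ell]$, substitute the three isotropy relations coming from $S_1[\ell]=0$ of Lemma \ref{lem: conditions}, and take the determinant. Your intermediate value $Q_0[\textup{Im}\,I]=1$ is the correct one (the paper's corresponding display carries a stray factor of $2$, though its final formula agrees with yours), and making the last step explicit via Lagrange's two-square identity is exactly what the paper leaves implicit in ``after taking determinant the claim follows.''
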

\begin{proof}
We have $S_0[I] = -2$ and therefore
\begin{multline*}
    \textup{Im}((\gamma I)_2) = \textup{Im}\left(\frac{l_1+(l_2+l_{n+3})i+l_{n+4}}{m_1+(m_2+m_{n+3})i+m_{n+4}}\right)= \\ =  \frac{(l_2+l_{n+3})(m_1+m_{n+4})-(l_1+l_{n+4})(m_2+m_{n+3})}{(m_1+m_{n+4})^2+(m_2+m_{n+3})^2}.
\end{multline*}
Also,
\begin{equation*}
    Q_0[\textup{Im}(\gamma I)] = |j(\gamma, I)|^{-2}Q_0[\textup{Im}I] = \frac{1}{(m_1+m_{n+4})^2+(m_2+m_{n+3})^2}.
\end{equation*}
Hence,
\begin{equation*}
    \frac{\textup{Im} ((\gamma I)_2)}{Q_0[\textup{Im}(\gamma I)]} = (l_2+l_{n+3})(m_1+m_{n+4})-(l_1+l_{n+4})(m_2+m_{n+3}).
\end{equation*}
Now, from Lemma \ref{lem: conditions}, with $\ell$ written as there, we have 
\begin{equation*}
    \begin{cases}
    S^{-1}[x] = 2(l_{n+4}l_1+l_{n+3}l_2). \\
    S^{-1}[y] = 2(m_{n+4}m_1+m_{n+3}m_2). \\ 
    x^tS^{-1}y = l_{n+4}m_1+l_{n+3}m_2+l_{2}m_{n+3}+l_1m_{n+4}.
\end{cases}
\end{equation*}
By then computing $R_I[\ell]$ and taking the determinant, the result follows.\end{proof}

After defining the majorant for $I$, we can use the transitivity of the action defined in \eqref{action}, in order to define a majorant for every $Z$ in $\mathcal{H}_S$.
\begin{proposition}\label{prop: majorants}
Let $Z \in \mathcal{H}_S$. Then, $\exists R_{Z} \in \mathfrak{H}$ such that for all $\gamma \in G_{\mathbb{R}}$,
    \begin{equation*}
        \left(\frac{\textup{Im} ((\gamma Z)_2)}{Q_0[\textup{Im}(\gamma Z)]}\right)^2 = \det(R_{Z}[\ell]),
    \end{equation*}
where $\ell$ is the matrix formed by the first two columns of $\gamma^{-1}$.
\end{proposition}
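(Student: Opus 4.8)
The plan is to leverage the transitivity of the $G_{\mathbb{R}}^{0}$-action on $\mathcal{H}_S$ together with Lemma \ref{identity}, transporting the base majorant $R_I = \textup{diag}(1,1,S,1,1)$ attached to the base point $I$ to an arbitrary $Z$. Since the action is transitive, I fix $g \in G_{\mathbb{R}}^{0}$ with $Z = g\langle I\rangle$ and set
\[
    R_Z := R_I[g^{-1}] = (g^{-1})^{t} R_I\, g^{-1}.
\]
First I would confirm that $R_Z \in \mathfrak{H}$. Symmetry and positivity are immediate, since $g^{-1}$ is invertible and $R_I > 0$; that $R_I$ itself lies in $\mathfrak{H}$ follows from the block identity $R_I S_1^{-1} R_I = S_1$. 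For the defining relation I would record the general fact that $G_{\mathbb{R}}$ preserves $\mathfrak{H}$: if $R \in \mathfrak{H}$ and $h \in G_{\mathbb{R}}$, then substituting $S_1^{-1} = h^{-1} S_1^{-1} (h^{-1})^{t}$ (a consequence of $h^{t} S_1 h = S_1$) into $R[h]\,S_1^{-1}\,R[h]$ collapses it to $h^{t}(R S_1^{-1} R)h = h^{t} S_1 h = S_1$, so $R[h] \in \mathfrak{H}$. Taking $h = g^{-1}$ gives $R_Z \in \mathfrak{H}$.

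The heart of the argument is a short cocycle manipulation. For $\gamma \in G_{\mathbb{R}}$, associativity of the action gives $\gamma\langle Z\rangle = \gamma\langle g\langle I\rangle\rangle = (\gamma g)\langle I\rangle$, so both $\textup{Im}(\gamma Z)_2$ and $Q_0[\textup{Im}(\gamma Z)]$ are unchanged upon replacing the pair $(\gamma, Z)$ by $(\gamma g, I)$. Applying Lemma \ref{identity} to $\gamma g \in G_{\mathbb{R}}$ therefore yields
\[
    \left(\frac{\textup{Im}(\gamma Z)_2}{Q_0[\textup{Im}(\gamma Z)]}\right)^{2} = \det\big(R_I[\ell_{\gamma g}]\big),
\]
where $\ell_{\gamma g}$ is the matrix of the first two columns of $(\gamma g)^{-1}$. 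Since $(\gamma g)^{-1} = g^{-1}\gamma^{-1}$, these columns are $\ell_{\gamma g} = g^{-1}\ell$, with $\ell$ the first two columns of $\gamma^{-1}$ as in Lemma \ref{lem: conditions}. Hence
\[
    \det\big(R_I[g^{-1}\ell]\big) = \det\big(\ell^{t}(g^{-1})^{t} R_I\, g^{-1}\ell\big) = \det\big(R_Z[\ell]\big),
\]
with no conjugation in the bracket as all matrices are real, which is precisely the claimed identity.

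The one point requiring care is well-definedness, and I expect it — rather than any of the algebra above — to be the only genuine obstacle. The element $g$ is determined by $Z$ only up to right multiplication by the stabiliser $\textup{Stab}(I)$ of $I$ in $G_{\mathbb{R}}^{0}$; replacing $g$ by $gk$ with $k \in \textup{Stab}(I)$ changes $R_Z$ to $(R_I[k^{-1}])[g^{-1}]$, so independence of the choice amounts to the invariance $R_I[k] = R_I$ for every $k \in \textup{Stab}(I)$. This reflects the standard fact that $Z \mapsto R_Z$ realises the $G_{\mathbb{R}}^{0}$-equivariant identification of $\mathcal{H}_S$ with $\mathfrak{H}$ sending the base point to the majorant fixed by the maximal compact; I would verify it either directly from the explicit shape of $\textup{Stab}(I)$ or by invoking the point--majorant correspondence of \cite{Borcherds_Products}. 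For the existence assertion of the Proposition the subtlety is in fact moot: the left-hand side depends only on $Z$ and $\gamma$, so the numbers $\det(R_Z[\ell])$ — and consequently the Eisenstein series in Proposition \ref{main_proposition_eisenstein} — are intrinsically determined irrespective of the choice of $g$.
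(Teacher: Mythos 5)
Your proof is correct and takes essentially the same approach as the paper: you define $R_Z := R_I[g^{-1}]$ via transitivity, apply Lemma \ref{identity} to $\gamma g$, identify the first two columns of $(\gamma g)^{-1}$ with $g^{-1}\ell$ (a slightly cleaner form of the paper's conversion step, which instead works with rows and the identity $S_1^{-1}(\delta^{-1})^{t} = \delta S_1^{-1}$), and verify $R_Z \in \mathfrak{H}$ by the same algebra. The one point you defer --- well-definedness of $Z \mapsto R_Z$ under change of $g$ --- is indeed immaterial for the bare existence statement, as you correctly observe; note, however, that the paper proves it by an explicit stabiliser computation (Lemma \ref{well-defined}) because the canonical assignment $W \mapsto R_W$ is needed later, e.g.\ for the relation $R_{M\langle W \rangle} = R_{W}[M^{-1}]$ in Lemma \ref{transformation} and for the theta series of Definition \ref{theta_definition}, so in the context of the paper that verification cannot be omitted.
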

\begin{proof}
    We start the proof by constructing such an $R_{Z}$. We denote by $I = (i, 0,\cdots,0, i)^{t}$. By transitivity, $\exists \delta \in G_{\mathbb{R}}^{0}$ such that $\delta \langle I\rangle  = Z$. We then define $R_{Z} := R_{I}[\delta^{-1}]$ and we claim this is well-defined. We prove this in Lemma \ref{well-defined}. Then
\begin{equation*}
    \left(\frac{\textup{Im} ((\gamma Z)_2)}{Q_0[\textup{Im}(\gamma Z)]}\right)^2 = \left(\frac{\textup{Im} ((\gamma\delta I)_2)}{Q_0[\textup{Im}(\gamma\delta I)]}\right)^2 = \det \left(R_{I}[\ell]\right),
\end{equation*}
where if $\gamma \delta = \m{*\\l^{t}\\m^{t}}$, we have $\ell = S_1^{-1}\m{m & l}$, by Lemma \ref{identity}. 
Now,
\begin{equation*}
    R_{Z} = R_{I}[\delta^{-1}] \implies R_{I} = R_{Z}[\delta].
\end{equation*}
Hence
\begin{equation*}
     \left(\frac{\textup{Im} ((\gamma Z)_2)}{Q_0[\textup{Im}(\gamma Z)]}\right)^2 = \det(R_{Z}[\delta\ell]).
\end{equation*}
Now, if $\gamma = \m{*\\(l')^{t}\\(m')^{t}}$, we want to show that the above quotient equals $\det(R_{Z}[S_1^{-1}\m{m'&l'}])$. But
\begin{equation*}
    \gamma \delta = \m{*\\l^t\\m^t} \implies \gamma = \m{* \\ l^t \\ m^t}\delta^{-1} = \m{* \\ l^t\delta^{-1}\\m^t\delta^{-1}} = \m{*\\ ((\delta^{-1})^tl)^{t}\\((\delta^{-1})^tm)^{t}},
\end{equation*}
so 
\begin{equation*}
    \m{m'&l'} = \m{(\delta^{-1})^tm&(\delta^{-1})^tl}.
\end{equation*}
Suffices to then show that (we remind here that $\ell = S_1^{-1}\m{m&l}$)
\begin{equation*}
    S_1^{-1}(\delta^{-1})^{t} = \delta S_1^{-1} \iff \delta S_1^{-1} \delta^{t} = S_1^{-1} \iff (\delta^{-1})^tS_1\delta^{-1} = S_1 \iff S_1[\delta^{-1}] = S_1,
\end{equation*}
which is true. \\\\
The only thing remaining to show is that $R_{Z} \in \mathfrak{H}$. But as $R_I$ is symmetric and positive definite, the same holds for $R_Z = R_I[\delta^{-1}]$. Finally, it is easy to show that $R_{I}S_1^{-1}R_I = S_1$ and then
\begin{equation*}
    R_{Z}S_1^{-1}R_Z = (\delta^{-1})^{t}R_I\delta^{-1}S_1^{-1}(\delta^{-1})^{t}R_I\delta^{-1} = (\delta^{-1})^{t}R_IS_1^{-1}R_I\delta^{-1}= (\delta^{-1})^{t}S_1\delta^{-1} = S_1[\delta^{-1}] = S_1.
\end{equation*}
Hence, $R_Z \in \mathfrak{H}$, as required.
\end{proof}
We now give the final Lemma, showing that $R_{Z}$ is well-defined.
\begin{lemma}\label{well-defined}
    For each $Z \in \mathcal{H}_S$, $R_{Z}$, as defined in the proof of Proposition \ref{prop: majorants}, is well-defined.
\end{lemma}
\begin{proof}
    We want to show that for $\delta_1,\delta_2 \in G_{\mathbb{R}}$,
    \begin{equation*}
        \delta_1 \langle I \rangle = \delta_2 \langle I\rangle \implies R_I[\delta_1^{-1}] = R_I[\delta_2^{-1}].
    \end{equation*}
     For that, it suffices to show that if $g \in G_{\mathbb{R}}$, such that $g\langle I \rangle = I$, then $R_{I}[g^{-1}] = R_{I}$. Let us now write
    \begin{equation*}
        g = \m{\alpha & a_1 & x^{t} & a_{n+2} & \beta \\ b_1&A_{1,1}&E^{t}&A_{1,n+2}&c_1\\y&F&K&G&z\\b_{n+2}&A_{n+2,1}&H^{t}&A_{n+2,n+2}&c_{n+2}\\\gamma&d_1&w^{t}&d_{n+2}&\delta},
    \end{equation*}
    with $\alpha,\beta,\gamma,\delta, A_{1,1},A_{1,n+2},A_{n+2,1},A_{n+2,n+2} \in \mathbb{R}$, $E,F,G,H,x,y,z,w \in \mathbb{R}^{n}$ and $K \in \mathbb{R}^{n,n}$.
    By the definition of the action and the fact that $g \langle I \rangle = I$, we obtain (cf. \cite[(1.3)]{sugano}):
    \begin{equation*}
        g \m{1 \\ I \\ 1} = j(g, I) \m{1 \\ I \\ 1}.
    \end{equation*}
    But we have $j(g,I) = \gamma+\delta+(d_1+d_{n+2})i$. Hence, we obtain the following relations:
    \begin{empheq}[left={\empheqlbrace }]{align}
        &\alpha+\beta + i(a_1+a_{n+2}) = \gamma+\delta+i(d_1+d_{n+2}).\label{(1)}\\
        &b_1+c_1+i(A_{1,1}+A_{1,n+2}) = i(\gamma+\delta)-(d_1+d_{n+2}).\label{(2)}\\
        &y+z+i(F+G)=0.\label{(3)}\\
        &b_{n+2}+c_{n+2}+i(A_{n+2,1}+A_{n+2,n+2})=i(\gamma+\delta)-(d_1+d_{n+2}).\label{(4)}
    \end{empheq}
    Now, $g\langle I\rangle=I$ implies $g^{-1}\langle I\rangle=I$ as well. But, as in Lemma \ref{inverse}, we have
    \begin{equation*}
        g^{-1} = \m{\delta&c_{n+2}&-z^{t}S&c_1&\beta\\d_{n+2}&A_{n+2,n+2}&-G^{t}S&A_{1,n+2}&a_{n+2}\\-S^{-1}w&-S^{-1}H&S^{-1}K^{t}S&-S^{-1}E&-S^{-1}x\\d_1&A_{n+2,1}&-F^{t}S&A_{1,1}&a_1\\\gamma&b_{n+2}&-y^{t}S&b_1&\alpha},
    \end{equation*}
    and therefore, we also obtain the relations
    \begin{empheq}[left={\empheqlbrace }]{align}
        &\delta+ \beta +i(c_{n+2}+c_1) = \gamma+\alpha+i(b_1+b_{n+2}).\label{(5)}\\
        &a_{n+2}+d_{n+2} + i(A_{n+2,n+2}+A_{1,n+2})) = i(\gamma+\alpha)-(b_1+b_{n+2}).\label{(6)}\\
        &S^{-1}w+S^{-1}x + i(S^{-1}H+S^{-1}E)=0.\label{(7)}\\
        &a_1+d_1 +i(A_{n+2,1}+A_{1,1}) = i(\gamma+\alpha)-(b_1+b_{n+2}).\label{(8)}
    \end{empheq}
    Using now equations \eqref{(2)}, \eqref{(4)}, \eqref{(6)}, \eqref{(8)}, we obtain
    \begin{equation*}
        A_{1,1}=A_{n+2,n+2}, \textup{ } A_{1,n+2}=A_{n+2,1} \textup{ and }\alpha=\delta.
    \end{equation*}
    Using \eqref{(1)} we also get $\alpha+\beta= \gamma+\delta \implies \beta=\gamma$ as well. From \eqref{(3)}, \eqref{(7)} we get
    \begin{equation*}
        y=-z, \textup{ } w=-x, \textup{ } F=-G \textup{ and } H=-E.
    \end{equation*}
    Finally, from \eqref{(1)}, \eqref{(6)}, \eqref{(8)}, we have the equations
    \begin{equation*}
        a_1+d_1 = -(b_{n+2}+b_1), \ a_{n+2}+d_{n+2}=-(b_1+b_{n+2}), \ a_1+a_{n+2}=d_{1}+d_{n+2}.
    \end{equation*}
    These give $a_1+d_1=a_{n+2}+d_{n+2}$, which together with $ a_1+a_{n+2}=d_{1}+d_{n+2}$ gives
    \begin{equation*}
        a_1=d_{n+2} \textup{ and } a_{n+2}=d_1.
    \end{equation*}
    Similarly, $b_1=c_{n+2}$ and $c_1=b_{n+2}$ and these relations are enough to check $g^{t}R_I = R_{I}g^{-1}$, i.e. what we wanted to prove.
\end{proof}
We are now ready to give the proof of the main Proposition \ref{main_proposition_eisenstein}.
\begin{proof}
Let $M = \m{e_1 & e_2}$. We claim that the map
\begin{align*}
    \Gamma_S &\longrightarrow X\\
    \gamma & \longmapsto \gamma^{-1}M
\end{align*}
induces a bijection $\Gamma_{S,J} \backslash \Gamma_S \xrightarrow{\sim} X / \textup{GL}_2(\mathbb{Z})$.
\begin{itemize}
    \item \underline{Well-Defined}: Firstly, $\gamma^{-1}M \in X$ as the first two columns of $\gamma^{-1}$ are integer vectors and as $\gamma^{-1} \in \Gamma_S \subset \textup{GL}_{n+4}(\mathbb{Z})$, we get that $\gamma^{-1}M$ is primitive. Moreover, $S_1[\gamma^{-1}M]=0$, as we have already shown in Lemma \ref{lem: conditions}.\\
    Now, if $\delta = p\gamma$ for some $p \in \Gamma_{S,J}$, then we can write $p^{-1}M =  \m{a&b\\c&d\\\vdots&\vdots\\0&0}$ with $\m{a&b\\c&d} \in \textup{SL}_2(\mathbb{Z})$ (because of the form of the parabolic). Hence
    \begin{equation*}
        \delta^{-1}M = \gamma^{-1}p^{-1}M = \gamma^{-1}M\m{a&b\\c&d},
    \end{equation*}
    which gives the map is well-defined.
    \item \underline{Injective}: If $\gamma^{-1}M = \delta^{-1}MN$ for some $N \in \textup{GL}_2(\mathbb{Z})$, then
    \begin{equation*}
        \gamma\delta^{-1}M = MN^{-1},
    \end{equation*}
    which shows that $\gamma\delta^{-1} \in \Gamma_{S,J}$ (the first two columns of $\gamma\delta^{-1}$ belong in the $\mathbb{Z}$-span of the vectors $e_1,e_2$ and $\gamma, \delta \in \Gamma_{S} \subset \textup{SL}_{n+4}(\mathbb{Z})$).
    \item \underline{Surjective}: Consider now $\m{l&m} \in X/\textup{GL}_2(\mathbb{Z})$. The plane generated by the vectors $l, m$ can be checked to be isotropic. Indeed, 
    \begin{equation*}
        \textup{span}\{l,m\} = \textup{span}\left\{\frac{l}{\gcd{(S_1l)}}, \frac{m}{\gcd{(S_1m)}}\right\}
    \end{equation*}
    in $V_1$. These vectors both belong in $L_1^{*}$ (can be checked directly) and also $\gcd{\left(\frac{S_1l}{\gcd{(S_1l)}}\right)}=1$ and similarly for the other vector. The isotropy condition still holds for the two new vectors (as we only change them by a scalar), and therefore the claim follows.\\\\
    Now, because $\#\mathcal{C}^{1}(\Gamma_S)=1$ by assumption, if $U$ is the plane generated by the basis vectors $e_1,e_2$ ($U$ is an isotropic plane too), there is an element $K \in \Gamma_S$ such that $K$ maps $U$ to $W$. Hence, there exist $x,y,z,w \in \mathbb{Q}$ such that
    \begin{equation*}
        \m{Ke_1&Ke_2} = \m{l&m}\m{x&z\\y&w}.
    \end{equation*}
    But as $\m{l&m}$ is assumed to be primitive, we have by \cite[Lemma 3.3]{Shimura_Euler_Product} that $\exists A \in \textup{Mat}_{2,n+4}(\mathbb{Z})$ such that $A\m{l&m}=1_2$. Hence, we obtain
    \begin{equation*}
        A\m{Ke_1&Ke_2} = \m{x&z\\y&w},
    \end{equation*}
    and so $x,y,z,w\in \mathbb{Z}$. Now $\m{Ke_1&Ke_2}$ is also primitive by \cite[Lemma 3.3]{Shimura_Euler_Product}, as it can be completed to an element of $\textup{GL}_{n+4}(\mathbb{Z})$, namely $K$. Hence, $\exists B \in \textup{Mat}_{2,n+4}(\mathbb{Z})$ such that $B\m{Ke_1&Ke_2} = 1_2$. Hence,
    \begin{equation*}
        \m{x&z\\y&w}^{-1} = B\m{l&m} \in \textup{Mat}_{2}(\mathbb{Z}).
    \end{equation*}
    Hence, as the inverse of that matrix also has integer entries, we must have that its determinant is $\pm 1$, i.e., $xw-yz=\pm 1$.
    Therefore, $K^{-1}$ gets mapped to $\m{l & m}\textup{GL}_2(\mathbb{Z})$, as wanted.
\end{itemize}
The rest of the proof now follows from Proposition \ref{prop: majorants}.
\end{proof}
\begin{remark}\label{cusps_so(2,n)}

    We would like to make a few comments here regarding the condition $\#\mathcal{C}^{1}(\Gamma_S)=1$. From \cite[Theorem 5.10]{one-dimensional-cusps}, and because the lattice $L_1$ is maximal, we have that the condition $\#\mathcal{C}^{1}(\Gamma_S)=1$ is equivalent to $L$ (the definite lattice of rank $n$) having only one class in its genus, and $[\textup{O}(L):\textup{SO}(L)]=2$. For $n \geq 2$, a complete list of lattices with one class in their genus is known (see \cite{single-class-genera}, \cite{hanke2011enumerating} for example), and their rank is at most $10$. If $n=1$, $L=\mathbb{Z}$ also has a single class in its genus in our setting ($S=2t$, with $t \geq 1$ a square-free integer). Finally, for the condition $[\textup{O}(L):\textup{SO}(L)]=2$, a criterion we can use so that we ensure the existence of an element with determinant $-1$ (at least when $n$ is even as if $n$ is odd, $-1_n$ always works) can be found in \cite[Lemma 9.23, (iii)]{shimura2004arithmetic}. That is, the existence of some $x \in \mathbb{Z}^{n}$ so that $S[x]=2$. 
\end{remark}
Finally, we have the following Lemma, where we replace the condition of primitivity of the elements of $X$ in Proposition \ref{main_proposition_eisenstein} with the condition that the elements have maximal rank.
\begin{lemma}\label{maximal_rank}
    With the notation as above, we have
    \begin{equation*}
        \zeta(s)\zeta(s-1)E(Z,s) = \sum_{\substack{\ell \in \textup{Mat}_{n+4,2}(\mathbb{Z})/ \textup{GL}_{2}(\mathbb{Z})\\\textup{rank }\ell=2, S_1[\ell]=0}}\left(\det(R_{Z}[\ell])\right)^{-s/2}.
    \end{equation*}
\end{lemma}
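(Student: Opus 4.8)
The plan is to derive the identity directly from Proposition~\ref{main_proposition_eisenstein} by enlarging the range of summation from primitive matrices to all integral matrices of rank $2$ satisfying $S_1[\ell]=0$; the two zeta factors will appear as the precise cost of this enlargement. The key elementary observation is the behaviour of the summand under right multiplication: if $\ell = \ell_0 T$ with $\ell_0 \in \textup{Mat}_{n+4,2}(\mathbb{Z})$ and $T \in \textup{Mat}_2(\mathbb{Z})$, then $R_Z[\ell] = T^t R_Z[\ell_0] T$, whence $\det(R_Z[\ell]) = (\det T)^2 \det(R_Z[\ell_0])$ and $\left(\det(R_Z[\ell])\right)^{-s/2} = |\det T|^{-s}\left(\det(R_Z[\ell_0])\right)^{-s/2}$. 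Likewise $S_1[\ell] = T^t S_1[\ell_0] T$, so for invertible $T$ the isotropy condition $S_1[\ell]=0$ holds if and only if $S_1[\ell_0]=0$. Thus the factorization is compatible both with the summand and with the constraint defining the index set.

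First I would organise this factorization at the level of $\textup{GL}_2(\mathbb{Z})$-orbits. Given a full-rank $\ell$, its columns span a rank-$2$ sublattice of $\mathbb{Z}^{n+4}$ whose saturation $\left(\textup{span}_{\mathbb{Q}}\ell\right)\cap \mathbb{Z}^{n+4}$ is a primitive sublattice; a choice of basis yields a primitive matrix $\ell_0$, unique up to right multiplication by $\textup{GL}_2(\mathbb{Z})$, together with a matrix $T \in \textup{Mat}_2(\mathbb{Z})$, $\det T \neq 0$, such that $\ell = \ell_0 T$. Since a primitive $\ell_0$ is a split injection $\mathbb{Z}^2 \hookrightarrow \mathbb{Z}^{n+4}$, two matrices $\ell_0 T$ and $\ell_0 T'$ are right $\textup{GL}_2(\mathbb{Z})$-equivalent exactly when $T' = TV$ for some $V \in \textup{GL}_2(\mathbb{Z})$, while the saturated lattice depends only on the orbit of $\ell$. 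This produces a bijection between full-rank orbits with $S_1[\ell]=0$ and pairs consisting of a primitive orbit $[\ell_0]$ with $S_1[\ell_0]=0$ and a right orbit $[T]$ in $\{T \in \textup{Mat}_2(\mathbb{Z}) : \det T \neq 0\}/\textup{GL}_2(\mathbb{Z})$. I expect the verification of this correspondence --- in particular checking that, for every nonsingular integral $T$, the saturation of the column lattice of $\ell_0 T$ recovers $\ell_0\mathbb{Z}^2$ --- to be the main technical point.

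With the bijection established, I would split the full-rank sum accordingly and apply the scaling relation to factor it, for $\textup{Re}(s)$ large enough that everything converges absolutely (so that the rearrangement is legitimate), as
\[
\sum_{\substack{\ell \in \textup{Mat}_{n+4,2}(\mathbb{Z})/\textup{GL}_2(\mathbb{Z})\\ \textup{rank }\ell=2,\, S_1[\ell]=0}} \left(\det(R_Z[\ell])\right)^{-s/2} = \left(\sum_{[T]} |\det T|^{-s}\right)\sum_{\substack{[\ell_0] \textup{ primitive}\\ S_1[\ell_0]=0}}\left(\det(R_Z[\ell_0])\right)^{-s/2}.
\]
The second factor equals $E(Z,s)$ by Proposition~\ref{main_proposition_eisenstein}. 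For the first, column Hermite normal forms provide representatives for $\{T : \det T \neq 0\}/\textup{GL}_2(\mathbb{Z})$, so the number of orbits with $|\det T| = N$ equals $\sigma_1(N) = \sum_{d \mid N} d$; hence $\sum_{[T]} |\det T|^{-s} = \sum_{N \geq 1}\sigma_1(N) N^{-s} = \zeta(s)\zeta(s-1)$, the standard Dirichlet series identity valid for $\textup{Re}(s) > 2$ (and $n \geq 1$ ensures this region overlaps the domain of convergence of $E(Z,s)$). Combining the two factors yields the stated equality.
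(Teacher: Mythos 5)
Your proposal is correct and follows essentially the same route as the paper's proof: factoring each full-rank $\ell$ as a primitive matrix times a nonsingular integral matrix $T$, and using the identity $\sum_{[T]}|\det T|^{-s}=\zeta(s)\zeta(s-1)$ to reduce to Proposition \ref{main_proposition_eisenstein}. The only difference is that the paper cites \cite{deitmar_krieg} and \cite{Shimura_Euler_Product} for these two ingredients, whereas you supply the saturation and Hermite-normal-form arguments explicitly.
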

\begin{proof}
    The proof is analogous to \cite[Lemma 3.1]{deitmar_krieg}. In particular, every matrix $\ell \in \textup{Mat}_{n+4,2}(\mathbb{Z})$ with $\textup{rank }\ell = 2$ and $S_1[\ell]=0$ can be written as $\ell = N\cdot M$ with $N$ being primitive and $S_1[N]=0$ and $M \in \textup{GL}_{2}(\mathbb{Q})\cap \textup{Mat}_{2,2}(\mathbb{Z})$. The proof then follows from the fact that
    \begin{equation*}
        \sum_{M \in (\textup{GL}_{2}(\mathbb{Q})\cap \textup{Mat}_{2,2}(\mathbb{Z}))/\textup{GL}_2(\mathbb{Z})} |\det(M)|^{-s} = \zeta(s)\zeta(s-1).
    \end{equation*}
    This can be found in \cite{deitmar_krieg} or its local version in \cite[Lemma 3.13]{Shimura_Euler_Product}.
\end{proof}
\section{Theta Series and Transformation Properties}\label{theta_series_section}
In order now to prove the analytic properties of the Klingen Eisenstein series $E(Z,s)$, we want to prove a theta correspondence between $\textup{SO}(2,n+2)$ and $\textup{Sp}_2$. That is, to integrate a Siegel Eisenstein series for $\textup{Sp}_2$ against an appropriately defined theta series and end up with $E(Z,s)$. However, as in most cases, the inner-product integrals will diverge, so we need to apply an appropriate differential operator first. In this Section, we recall the action of the symplectic group on Siegel's upper half plane, define the appropriate theta series and prove some important transformation properties.\\\\
The (real) symplectic group of degree $m \geq 1$ is defined by 
\begin{equation*}
    \textup{Sp}_m(\mathbb{R}) := \left\{g \in \textup{GL}_{2m}(\mathbb{R}) \mid g^{t}\m{0_m & -1_m\\1_m & 0_m}g = \m{0_m & -1_m\\1_m & 0_m}\right\}.
\end{equation*}
The Siegel's upper half plane is defined by 
\begin{equation*}
    \mathbb{H}_m := \left\{Z = X+iY \in M_m(\mathbb{C}) \mid X = X^{t}, Y = Y^{t}>0\right\}.
\end{equation*}
Now $g = \m{A&B\\C&D} \in \textup{Sp}_{m}(\mathbb{R})$ acts on $\mathbb{H}_m$ via
\begin{equation*}
    (g, Z) \longmapsto g \langle Z \rangle := (AZ+B)(CZ+D)^{-1}.
\end{equation*}
This defines a transitive action of $\textup{Sp}_m(\mathbb{R})$ on $\mathbb{H}_m$. We also define the factor of automorphy $j(g, Z) := \det(CZ+D)$. We call $\textup{Sp}_{m}(\mathbb{Z}) := \textup{Sp}_{m}(\mathbb{R}) \cap \textup{GL}_{2m}(\mathbb{Z})$ the full modular group. For any integer $N > 0$, we define the following congruence subgroup of $\textup{Sp}_m(\mathbb{Z})$:
\begin{equation*}
    \Gamma_0^{(m)}(N) := \left\{\m{A&B\\C&D} \in \textup{Sp}_{m}(\mathbb{Z}) \mid C \equiv 0 \pmod N\right\}.
\end{equation*}
We then have the following Definition (cf. \cite[Section 2.4.1]{Courtieu1991}):
\begin{defn}
    Let $k \in \mathbb{Z}$ and $m \geq 2$. A function $F : \mathbb{H}_m \longrightarrow \mathbb{C}$, with $F \in \mathcal{C}^{\infty}(\mathbb{H}_m)$, is called a $\mathcal{C}^{\infty}$-modular form of weight $k$ on the group $\Gamma_{0}^{(m)}(N)$ with a Dirichlet character $\psi \pmod N$, if for all
    \begin{equation*}
        \gamma = \m{A & B \\ C & D} \in \Gamma_{0}^{(m)}(N),
    \end{equation*}
    we have
    \begin{equation*}
        F(\gamma \langle Z \rangle) = \psi(\det D) \det(CZ+D)^{k}F(Z),
    \end{equation*}
    for all $Z \in \mathbb{H}_m$. We denote the space of such functions by $\widetilde{M}_k(N, \psi)$.
\end{defn}
Finally, in some cases, we can define a suitable inner product on the above space.
\begin{defn}\label{inner_product_congruence}
    The Petersson inner product of a pair of modular functions $F,G \in \widetilde{M}_k(N, \psi)$ is
    \begin{equation*}
        \left\langle F,G \right\rangle := \frac{1}{[\textup{Sp}_{m}(\mathbb{Z}):\Gamma_{0}^{(m)}(N)]} \int_{\Gamma_{0}^{(m)}(N)\backslash \mathbb{H}_m} F(Z)\overline{G(Z)}(\det Y)^{k}\textup{d}^{*}Z,
    \end{equation*}
    whenever the integral converges. Here, $\textup{d}^{*}Z$ is the $\textup{Sp}_{m}(\mathbb{R})$-invariant measure 
    \begin{equation*}
        (\det Y)^{-(m+1)}\textup{d}X\textup{d}Y.
    \end{equation*}
\end{defn}
We are now ready to give the definition of the theta series of interest, which is related to $\textup{Sp}_2$. Such theta series were first studied by Siegel.
\begin{defn}\label{theta_definition}
   Let $Z = X+iY \in \mathbb{H}_2$ and $W \in \mathcal{H}_S$. We then define the theta series\\
    \begin{equation*}
        \theta(Z,W) := \sum_{\ell \in \textup{Mat}_{n+4,2}(\mathbb{Z})}\theta_{\ell}(Z,W),
    \end{equation*}
    where for any $\ell \in \textup{Mat}_{n+4,2}(\mathbb{Z})$, we define
    \begin{equation*}
         \theta_{\ell}(Z,W) := e^{\pi i \textup{tr}(S_1[\ell]X)-\pi \textup{tr}(R_W[\ell]Y)} .
    \end{equation*}
    Let also $\Theta(Z,W) := \det(Y)^{\frac{n+2}{2}}\theta(Z,W)$. Because $R_W$ is positive definite, it follows that $\Theta$ converges absolutely and uniformly in any region of the form $\mathbb{H}_2(\epsilon) = \{Z = X+iY \mid Y \geq \epsilon 1_2\}$ with $\epsilon > 0$ and therefore defines a real analytic function of the matrices $X,Y$ with $X+iY \in \mathbb{H}_2$ (cf. \cite[page 291]{spherical_theta}). 
\end{defn}
We start with the following Lemma regarding the invariance of $\Theta$ under $\Gamma_S$. 
\begin{lemma}\label{transformation}
    Let $W \in \mathcal{H}_S$. We then have
    \begin{equation*}
        \Theta(Z, M\langle W\rangle) = \Theta(Z, W)
    \end{equation*}
    for all $M \in \Gamma_S$.
\end{lemma}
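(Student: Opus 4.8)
The plan is to exploit the factorization $\Theta(Z,W) = \det(Y)^{(n+2)/2}\sum_{\ell}e^{\pi i \operatorname{tr}(S_1[\ell]X) - \pi \operatorname{tr}(R_W[\ell]Y)}$ and observe that the $Z$-variable enters only through the quantities $S_1[\ell] = \ell^t S_1 \ell$ (via $X$) and $R_W[\ell] = \ell^t R_W \ell$ (via $Y$), while the overall $\det(Y)^{(n+2)/2}$ factor is independent of $W$. Since $S_1[\ell]$ does not involve $W$ at all, the entire $W$-dependence of a single summand $\theta_\ell(Z,W)$ lives in the term $R_W[\ell] = \ell^t R_W \ell$. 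Therefore it suffices to understand how $R_{M\langle W\rangle}$ compares to $R_W$ for $M \in \Gamma_S$, and then to absorb the discrepancy by reindexing the sum over $\ell \in \operatorname{Mat}_{n+4,2}(\mathbb{Z})$.

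First I would recall the construction of the majorant from Proposition \ref{prop: majorants}: for $W = \delta\langle I\rangle$ we set $R_W = R_I[\delta^{-1}]$, and the well-definedness (Lemma \ref{well-defined}) guarantees this depends only on $W$. The key algebraic step is then to compute $R_{M\langle W\rangle}$ for $M \in \Gamma_S$. Writing $W = \delta\langle I\rangle$, we have $M\langle W\rangle = (M\delta)\langle I\rangle$, so by definition $R_{M\langle W\rangle} = R_I[(M\delta)^{-1}] = R_I[\delta^{-1}M^{-1}] = (M^{-1})^t R_I[\delta^{-1}] M^{-1} = (M^{-1})^t R_W M^{-1}$, using the bracket notation $A[B] = B^t A B$ for real matrices. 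Hence $R_{M\langle W\rangle} = R_W[M^{-1}]$.

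Substituting this into a single summand, I compute
\begin{equation*}
    R_{M\langle W\rangle}[\ell] = \ell^t R_W[M^{-1}] \ell = \ell^t (M^{-1})^t R_W M^{-1}\ell = (M^{-1}\ell)^t R_W (M^{-1}\ell) = R_W[M^{-1}\ell].
\end{equation*}
The analogous (and easier) observation for the $X$-term is that $S_1[M^{-1}\ell] = (M^{-1}\ell)^t S_1 (M^{-1}\ell) = \ell^t (M^{-1})^t S_1 M^{-1}\ell = \ell^t S_1\ell = S_1[\ell]$, where I use $S_1[M^{-1}] = S_1$, which holds because $M \in \Gamma_S \subset G_{\mathbb{R}}$ preserves $S_1$ (so $M^t S_1 M = S_1$ and equivalently $(M^{-1})^t S_1 M^{-1} = S_1$). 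Thus $\theta_\ell(Z, M\langle W\rangle) = \theta_{M^{-1}\ell}(Z, W)$ termwise.

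Finally I would carry out the reindexing: since $M \in \Gamma_S \subset \operatorname{SL}_{n+4}(\mathbb{Z})$, the map $\ell \mapsto M^{-1}\ell$ is a bijection of the lattice $\operatorname{Mat}_{n+4,2}(\mathbb{Z})$ onto itself, so summing $\theta_{M^{-1}\ell}(Z,W)$ over all $\ell$ recovers $\sum_{\ell}\theta_\ell(Z,W) = \theta(Z,W)$. Multiplying through by the $W$-independent factor $\det(Y)^{(n+2)/2}$ gives $\Theta(Z, M\langle W\rangle) = \Theta(Z, W)$. The absolute convergence established in Definition \ref{theta_definition} justifies the rearrangement of terms. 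I expect the only genuinely delicate point to be the transformation rule $R_{M\langle W\rangle} = R_W[M^{-1}]$; this rests on the well-definedness of the majorant (Lemma \ref{well-defined}) and on the cocycle-free nature of the assignment $W \mapsto R_W$, and everything downstream is then a short bracket-notation computation combined with the lattice bijection.
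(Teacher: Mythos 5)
Your proof is correct and follows essentially the same route as the paper: termwise substitution using $R_{M\langle W\rangle} = R_W[M^{-1}]$ and $S_1[M^{-1}] = S_1$, followed by reindexing via the bijection $\ell \mapsto M^{-1}\ell$ of $\textup{Mat}_{n+4,2}(\mathbb{Z})$. The only difference is that you explicitly derive $R_{M\langle W\rangle} = R_W[M^{-1}]$ from the construction $R_W = R_I[\delta^{-1}]$ and Lemma \ref{well-defined}, whereas the paper simply asserts this relation; your added justification is a welcome strengthening, not a deviation.
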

\begin{proof}
    Let $M \in \Gamma_S$. Then
    \begin{align*}
        \Theta(Z, M\langle W\rangle) &= \det(Y)^{\frac{n+2}{2}}\sum_{\ell \in \textup{Mat}_{n+4,2}(\mathbb{Z})} e^{\pi i \textup{tr}(S_1[\ell]X)-\pi \textup{tr}(R_{M\langle W\rangle}[\ell]Y)}\\ &= \det(Y)^{\frac{n+2}{2}}\sum_{\ell \in \textup{Mat}_{n+4,2}(\mathbb{Z})} e^{\pi i \textup{tr}(S_1[M^{-1}\ell]X)-\pi \textup{tr}(R_{W}[M^{-1}\ell]Y)}\\ &= \Theta(Z,W),
    \end{align*}
    because $R_{M\langle W \rangle} = R_{W}[M^{-1}]$ and $S_1[M^{-1}] = S_1$. These relations, together with the fact that $M \in \Gamma_{S} \subset \textup{GL}_{n+4}(\mathbb{Z}) \implies M^{-1} \in \textup{GL}_{n+4}(\mathbb{Z})$, gives the invariance under $\Gamma_{S}$.
\end{proof}
The situation, however, is quite different when it comes to the transformation of $\Theta$ with respect to $\textup{Sp}_{2}(\mathbb{Z})$. In particular, it is necessary to consider a specific congruence subgroup of $\textup{Sp}_2(\mathbb{Z})$ and even then $\Theta$ transforms as a modular form of a non-trivial weight and character.
\begin{proposition}[\cite{spherical_theta}, Theorem 1,2]\label{richter}
    Let $q$ denote the level of $S$, equivalently $S_1$ (see Definition \ref{level}). Then, for all $\gamma = \m{A & B \\ C &D} \in \Gamma_0^{(2)}(q)$, we have
    \begin{equation*}
        \Theta(\gamma Z, W) = \chi_{S}(\gamma)j(\gamma, Z)^{-n/2}\Theta(Z,W),
    \end{equation*}
    where $\chi_S(\gamma)$ is an eighth root of unity which does not depend on $Z$ and $W$ (cf. \cite[Theorem 1]{spherical_theta}).\\
    In the case when the rank $n$ of $S$ is even, we have $\chi_{S}(\gamma)=\psi_{S}(\det D)$, with $\psi_{S}$ a Dirichlet character modulo $q$ such that
    \begin{equation*}
        \psi_{S}(p) = \left(\frac{(-1)^{n/2}\det S}{p}\right)
    \end{equation*}
    for all odd primes $p$ (Legendre symbol) and $\psi_{S}(-1) = (-1)^{n/2}$ (cf. \cite[Theorem 2]{spherical_theta}). In particular, this implies $\Theta \in \widetilde{M}_k\left(q, \psi_{S}\right)$ with $k=-n/2$.
\end{proposition}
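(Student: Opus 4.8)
The plan is to regard $\Theta$ as a Siegel theta series of degree $2$ attached to the indefinite even lattice $(L_1,S_1)$ of signature $(2,n+2)$ together with the positive definite majorant $R_W$, and to derive its transformation behaviour under $\Gamma_0^{(2)}(q)$ from the classical Poisson-summation theory of such series. Concretely, I would build up the automorphy factor on the natural generating data of $\Gamma_0^{(2)}(q)$ — the translations $\left(\begin{smallmatrix}1&B\\0&1\end{smallmatrix}\right)$ with $B=B^t$ integral, the $\operatorname{GL}_2$-blocks $\left(\begin{smallmatrix}{}^tU^{-1}&0\\0&U\end{smallmatrix}\right)$, and the inversion $\left(\begin{smallmatrix}0&-1\\1&0\end{smallmatrix}\right)$ — exploiting that for $\gamma\in\Gamma_0^{(2)}(q)$ the congruence $q\mid C$ is exactly what forces the dual-lattice sum produced by inversion to collapse back onto $L_1$, leaving only a scalar Gauss-sum factor which becomes $\chi_S(\gamma)$.

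First I would treat the translations. Here only the $X$-dependence is affected, and
\[
\theta(Z+B,W)=\sum_{\ell}e^{\pi i\operatorname{tr}(S_1[\ell](X+B))-\pi\operatorname{tr}(R_W[\ell]Y)}.
\]
Since $S_1$ is even, each $S_1[\ell]=\ell^tS_1\ell$ has even diagonal and integral off-diagonal entries, so $\operatorname{tr}(S_1[\ell]B)\in2\mathbb{Z}$ and the twist $e^{\pi i\operatorname{tr}(S_1[\ell]B)}=1$. Thus $\Theta$ is invariant under all such translations with trivial factor and trivial character. The $\operatorname{GL}_2$-blocks act by the relabelling $\ell\mapsto\ell U$, which only permutes the summation lattice and contributes the expected $\psi_S(\det U)=\psi_S(\det D)$; this step is routine.

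The crux is the inversion $Z\mapsto -Z^{-1}$, handled by Poisson summation over $\operatorname{Mat}_{n+4,2}(\mathbb{Z})$. Using the majorant identity $R_WS_1^{-1}R_W=S_1$, I would diagonalise $S_1$ and $R_W$ simultaneously, splitting the Gaussian $\ell\mapsto e^{\pi i\operatorname{tr}(S_1[\ell]X)-\pi\operatorname{tr}(R_W[\ell]Y)}$ into $2$ factors along the positive-definite directions and $n+2$ along the negative-definite ones. Its Fourier transform in $\ell$ is again of the same shape with $Z$ replaced by $-Z^{-1}$, the positive-definite directions producing holomorphic $\det(Z/i)$-factors and the negative-definite ones anti-holomorphic factors; after incorporating the $\det(Y)^{(n+2)/2}$ normalisation built into $\Theta$, the anti-holomorphic contributions are absorbed and one is left precisely with the holomorphic automorphy factor $j(\gamma,Z)^{-n/2}$, i.e. weight $(b^+-b^-)/2=-n/2$. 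The transformed sum runs over $L_1^\ast$, and returning to a sum over $L_1$ (so as to recover $\theta$ itself) yields a quadratic Gauss sum over $L_1^\ast/L_1$; this is where the level $q$ (the least $q$ with $qS_1^{-1}$ even integral) enters, guaranteeing both that the congruence $q\mid C$ makes the factor $\chi_S(\gamma)$ well defined and that its $W$-dependence cancels by the majorant relation, leaving an eighth root of unity independent of $Z$ and $W$.

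Finally, for even $n$ I would evaluate this Gauss sum explicitly. Using $\det S_1=(-1)^n\det S$ and the standard evaluation of quadratic Gauss sums via reciprocity, the root of unity $\chi_S(\gamma)$ collapses to the value $\psi_S(\det D)$ of the quadratic (Kronecker) character attached to the discriminant $(-1)^{(n+4)/2}\det S_1=(-1)^{n/2}\det S$; in particular $\psi_S(p)=\left(\tfrac{(-1)^{n/2}\det S}{p}\right)$ for odd $p$, while $\psi_S(-1)=\operatorname{sign}\big((-1)^{n/2}\det S\big)=(-1)^{n/2}$ since $S$ is positive definite. This exhibits $\Theta\in\widetilde{M}_{-n/2}(q,\psi_S)$. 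The main obstacle is precisely the inversion step combined with this Gauss-sum evaluation: tracking the eighth root of unity through Poisson summation and matching both the level $q$ and the explicit Legendre-symbol formula for $\psi_S$ is the delicate part, whereas the translation and $\operatorname{GL}_2$ invariances and the bookkeeping of the $\det Y$ factor are routine. This is exactly the content of Theorems~1 and~2 of the cited work, which I would ultimately invoke.
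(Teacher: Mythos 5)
The paper contains no proof of this proposition: it is imported verbatim from \cite{spherical_theta} (Theorems 1 and 2), and your closing sentence --- that you would ultimately invoke those theorems --- is exactly what the paper does. In that respect your proposal and the paper coincide, and your accounting of the analytic details that the citation hides is largely accurate: the absorption of the anti-holomorphic factor by the normalisation $\det(Y)^{(n+2)/2}$, giving holomorphic weight $(b^{+}-b^{-})/2=-n/2$; the role of the level $q$ (least $q$ with $qS_1^{-1}$ even integral); and the identification of $\psi_S$ as the quadratic character of discriminant $(-1)^{n/2}\det S$, with $\psi_S(-1)=(-1)^{n/2}$ because $S>0$.

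However, if your sketch is read as an actual proof outline rather than a gloss on the citation, it has a genuine structural gap: the elements you propose to induct on do not generate $\Gamma_0^{(2)}(q)$ when $q>1$. The inversion $\left(\begin{smallmatrix}0&-1\\1&0\end{smallmatrix}\right)$ is not an element of $\Gamma_0^{(2)}(q)$ for $q>1$, and the translations together with the $\hbox{GL}_2$-blocks generate only a subgroup of the Siegel parabolic $P_{2,0}$ (every word in them has $C=0$), so the generator argument establishes the transformation law on $P_{2,0}$ and nothing more. To reach a general $\gamma$ with $q\mid C$ one must either (i) write $\gamma$ as a word in generators of the full $\hbox{Sp}_2(\mathbb{Z})$, in which case the intermediate elements leave $\Gamma_0^{(2)}(q)$ and Poisson summation produces theta series with characteristics in $L_1^{*}/L_1$, forcing one to track the full vector-valued (Weil-representation) transformation and then prove that it collapses to the scalar $\chi_S(\gamma)$ precisely when $q\mid C$; or (ii) carry out Poisson summation directly for an arbitrary such $\gamma$, which produces Gauss sums over $L_1/cL_1$ --- this is what Richter (and, for holomorphic theta series, Andrianov--Maloletkin) actually do. Your sketch is sound as written only in the unimodular case $q=1$ (e.g.\ the $E_8$ lattice used later in the paper), where $\hbox{Sp}_2(\mathbb{Z})$ is indeed generated by the inversion and the translations; for general $q$ the step you label ``routine bookkeeping plus one delicate inversion'' is exactly where the substance of the cited theorems lies.
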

\section{Differential Operators}\label{diffrential_operators}
In this Section, we prepare the ground for the theta-correspondence. In the same fashion as Krieg in \cite{krieg1991}, Gritsenko in \cite{gritsenko}, and Raghavan and Sengupta in \cite{raghavan}, we need to apply some differential operators to the theta series first, so that the integral converges. Our first step is to make $\Theta$ invariant under the action of $\textup{Sp}_2(\mathbb{Z})$, up to the character $\chi_S$. This is essential because the differential operators that will eliminate the terms of the theta series that cause divergence are $\textup{Sp}_2(\mathbb{R})$-invariant. From now on, we assume that $4 \mid n$.\\

For $m \geq 1$, let $Z = X+iY \in \mathbb{H}_m$. We denote by $\partial_Z$ the matrix
\begin{equation*}
    (\partial_Z)_{ij} := \frac{1+\delta_{ij}}{2}\frac{\partial}{\partial Z_{ij}},
\end{equation*}
where $\delta_{ij}$ denotes the Kronecker's delta, for $1\leq i,j\leq m$. Here $\partial/\partial Z_{ij} := \left(\partial/\partial X_{ij}-i\partial/\partial Y_{ij}\right)/2$.
Due to Maass in \cite{maass}, we have the operator ($k \in \mathbb{Z}$)
\begin{equation}\label{maass shimura operator}
    \delta_{k} := \det(Y)^{-k+\frac{m-1}{2}}\det(\partial_Z)\det(Y)^{k-\frac{m-1}{2}},
\end{equation}
which sends functions from $\widetilde{M}_k(N,\psi_S)$ to $\widetilde{M}_{k+2}(N, \psi_S)$ (cf. \cite[Section 3.3.1]{Courtieu1991}).\\

For any integer $r \geq 1$, one then defines the Shimura differential operator as the composition
\begin{equation}\label{shimura operator}
    \delta_{k}^{(r)} := \delta_{k+2r-2}\cdots \delta_{k+2}\delta_{k}.
\end{equation}
This sends functions from $\widetilde{M}_k(N,\psi_S)$ to $\widetilde{M}_{k+2r}(N, \psi_S)$ (cf. \cite[Section 3.3.1]{Courtieu1991}). Therefore, in the case $m=2$, $\delta_k^{(r)}\Theta \in \widetilde{M}_0(q, \psi_S)$ with $k=-n/2$ and $r=-k/2 = n/4$, because of Proposition \ref{richter}. However, we still need to apply an invariant differential operator $R$, so that we remove the singular terms that cause the integrals to diverge. The existence of a suitable such operator is guaranteed by a result of Deitmar and Krieg in \cite{deitmar_krieg}. Let us now describe it here. \\

For any dimension $m \geq 1$, we consider the algebras $\mathbb{D}(\mathbb{H}_m)$ and $\mathbb{D}(\mathcal{P}_m)$ of invariant differential operators with respect to $\textup{Sp}_{m}(\mathbb{R})$ and $\textup{GL}_m(\mathbb{R})$. Here, $\mathcal{P}_m$ denotes the symmetric space $\textup{GL}_{m}(\mathbb{R})/\textup{O}_{m}(\mathbb{R})$ and can be identified with the set
\begin{equation*}
    \mathcal{P}_m = \left\{Y \in \textup{Mat}_{m}(\mathbb{R}) \mid Y = Y^{t}>0\right\},
\end{equation*}
by means of the action of $\textup{GL}_{m}(\mathbb{R})$ on $\mathcal{P}_m$ given by
\begin{equation*}
    M \cdot Y = Y[M^{t}] = MYM^{t}.
\end{equation*}
Next, if we consider the injective map
\begin{equation*}
    \phi : \mathcal{C}^{\infty}(\mathcal{P}_m) \longrightarrow \mathcal{C}^{\infty}(\mathbb{H}_m)
\end{equation*}
defined by
\begin{equation*}
    \phi(f)(X+iY) := f(Y),
\end{equation*}
we may associate to $\phi$ a well-defined map
\begin{equation*}
    \phi^{*} : \mathbb{D}(\mathbb{H}_m) \longrightarrow \mathbb{D}(\mathcal{P}_m),
\end{equation*}
which sends $D \longmapsto \phi^{-1} \circ D \circ \phi$. Then, from \cite[Theorem 1.1]{deitmar_krieg}, we obtain that $\phi^{*}$ is an injective algebra homomorphism.\\

Now, for any $l \in \mathbb{R}$, we  define the operator $D_{l} = D_{l,m} \in \mathbb{D}\left(\mathcal{P}_m\right)$ by
\begin{equation*}
    D_{l,m}(Y) := (\det Y)^{l}\det (\partial_Y) \det(Y)^{1-l}.
\end{equation*}
From \cite[Theorem 1.2]{deitmar_krieg}, we have that $D_{m+2-l,m}D_{l,m}$ belongs in the image of $\phi^{*}$. Define then 
\begin{equation*}
   R(m,l) := \left(\phi^{*}\right)^{-1}\left(D_{m+2-l,m}D_{l,m}\right) \in \mathbb{D}(\mathbb{H}_m).
\end{equation*}
In the case $m=2$, we write for $Z = X+ iY \in \mathbb{H}_2$,
\begin{equation*}
    Z = \m{z_1 & z_3\\z_3 & z_2}, \textup{ }X = \m{x_1 & x_3\\x_3 & x_2}, \textup{ } Y = \m{y_1 & y_3\\y_3 & y_2}.
\end{equation*}
We first have the following Lemma regarding the behaviour of $\Theta$ under the action of the Maass-Shimura operator.
\begin{lemma}\label{expression_under_delta}
Let $k=-n/2$ and $r = n/4$. For any $\ell \in \textup{Mat}_{n+4,2}(\mathbb{Z})$, we have
\begin{multline}\label{delta expansion}
    \delta_{k}^{(r)}\left[(\det Y)^{\frac{n+2}{2}}\theta_{\ell}(Z,W)\right] = (\det Y)^{1+r}p(\det((S_1+R_W)[\ell]Y), \textup{tr}((S_1+R_W)[\ell]Y))\theta_{\ell}(Z,W),
\end{multline}
where $p=p(U,V) \in \mathbb{R}[U,V]$ is a polynomial in two variables, and its coefficients do not depend on $\ell$. In particular, if $\ell \in \textup{Mat}_{n+4,2}(\mathbb{Z})$ is such that $S_1[\ell]=0$, this becomes  
\begin{equation}\label{p expression}
    \delta_{k}^{(r)}\left[(\det Y)^{\frac{n+2}{2}}\theta_{\ell}(Z,W)\right] = (\det Y)^{1+r}p\left(\det (R_{W}[\ell]Y), \textup{tr}\left(R_{W}[\ell]Y\right)\right)e^{-\pi\textup{tr}\left(R_{W}\left[\ell\right]Y\right)}.
\end{equation}
\end{lemma}
\begin{proof}
    For any $\ell \in \textup{Mat}_{n+4,2}(\mathbb{Z})$, we have by \cite[Lemma 1.1]{satoh1986differential}
    \begin{equation*}
        \delta_{k}^{(r)}\left[(\det Y)^{\frac{n+2}{2}}\theta_{\ell}(Z,W)\right] = (\det Y)^{\frac{n+2}{2}}\delta_{1}^{(r)}\theta_{\ell}(Z,W).
    \end{equation*}
    Define now the operator $\sigma \in \mathbb{D}(\mathbb{H}_2)$ by
    \begin{equation}\label{sigma operator}
        \sigma := i\sum_{j=1}^{3}y_{j}\frac{\partial}{\partial z_{j}}.
    \end{equation}
    By \cite[Proposition 1.2, (a)]{satoh1986differential}, $\delta_{1}^{(r)}\theta_{\ell}(Z,W)$ is a $\mathbb{Z}[1/2]$-linear combination of functions of the form
    \begin{equation}\label{satoh expression}
        \det(Y)^{-b}\sigma^{c}\left(\det(\partial_{Z})\right)^{d}\theta_{\ell}(Z,W),
    \end{equation}
    where $b, c, d$ are integers with $0\leq b\leq c\leq r, \ 0\leq d\leq r$ and $b+d=r$. Note that the additional requirements stated in \cite[Proposition 1.2]{satoh1986differential} are not needed for the proof of this part. We compute 
    \begin{multline}\label{a_l,b_l,c_l}
        \frac{\partial}{\partial z_{1}} \theta_{\ell}(Z,W) = A_{\ell}\theta_{\ell}(Z,W), \textup{ }\frac{\partial}{\partial z_{2}} \theta_{\ell}(Z,W) = B_{\ell}\theta_{\ell}(Z,W), \ \frac{\partial}{\partial z_{3}} \theta_{\ell}(Z,W) = 2C_{\ell}\theta_{\ell}(Z,W),
    \end{multline}
    where for $\ell = \m{l&m}$, with $l, m \in \mathbb{Z}^{n+4}$ we have 
    \begin{equation*}
        \displaystyle{A_{\ell} = \frac{1}{2}\pi i \left(S_1[l]+R_W[l]\right)}, \ \displaystyle{B_{\ell} = \frac{1}{2}\pi i \left(S_1[m]+R_W[m]\right)}, \ \displaystyle{C_{\ell} = \frac{1}{2}\pi i \left(l^{t}S_1m+l^{t}R_Wm\right)}.
    \end{equation*}
    We now set $\displaystyle{T_{\ell} := \frac{1}{2}\pi i (S_1 + R_{W})[\ell]}$ and write $T_{\ell} = \m{t_1 & t_3/2 \\ t_3/2 & t_2}$, with $t_1 = A_{\ell}, t_2 = B_{\ell}$ and $t_3 = 2C_{\ell}$.\\

    We first compute $\det (\partial_{Z})\theta_{\ell} = \det (T_{\ell})\theta_{\ell}$. Moreover, if $B := \textup{tr}(T_{\ell}Y)$, we claim that for any $0 \leq c \leq r$, $\sigma^{c}\theta_{\ell} = f_{c}(B)\theta_{\ell}$ for some polynomial $f$ of degree $c$. We show this by induction. Note that for $j=1,2,3$, we have $\partial B/\partial z_{j} = -it_j/2$. \\

    If $c = 0$, then the claim is clear with $f_0(B) = 1$. If now $\sigma^{c}\theta_{\ell} = f_{c}(B)\theta_{\ell}$, we have 
    \begin{equation*}
        \sigma^{c+1}\theta_{\ell} = i \sum_{j=1}^{3}y_j\left[f_{c}'(B)\left(\frac{-it_j}{2}\right)\theta_{\ell} + f_{c}(B)t_j\theta_{\ell}\right] = B\left(\frac{1}{2}f_{c}'(B)+if_{c}(B)\right)\theta_{\ell},
    \end{equation*}
    from which the claim follows with $\displaystyle{f_{c+1}(B) = B\left(\frac{1}{2}f_{c}'(B)+if_{c}(B)\right)}$ of degree $c+1$. 
    
    Now, each term in \eqref{satoh expression} can be written as
    \begin{multline*}
        (\det Y)^{-b}\sigma^{c}\det (\partial _{Z})^{d}\theta_\ell = (\det Y)^{-b}\sigma^{c}\det (T_\ell)^{d}\theta_\ell = (\det Y)^{-r}[\det (T_\ell Y)^{d}\sigma^{c}\theta_{\ell}] =\\= (\det Y)^{-r}[\det (T_\ell Y)^{d}f_{c}(\textup{tr}(T_\ell Y))\theta_{\ell}],
    \end{multline*}
    because $r - b = d$. Equation \eqref{delta expansion} now follows after absorbing the $\pi i /2$ factor of $T_{\ell}$ and observing (by induction) that $f_{c}(B)$ has purely imaginary coefficients in the odd powers of $B$ and real coefficients in the even powers. Hence $p$ will have real coefficients. Equation \eqref{p expression} now follows immediately from \eqref{delta expansion}. \qedhere 
    
\end{proof}

We are now ready to give the main Proposition of this Section, regarding the elimination of the terms that will cause the divergence of the integral. 
\begin{proposition}\label{divergent_terms}
Let $k=-n/2$ and $r=-k/2 = n/4$. Let also $R := R\left(2, 2+r\right)$. We then have
\begin{equation*}
    R\left[\delta_{k}^{(r)}\Theta\right](Z,W) = R\left[\sum_{\substack{\ell \in \textup{Mat}_{{n+4},2}(\mathbb{Z})\\\textup{rank } \ell=2}}\delta_{k}^{(r)} \left[(\det Y)^{\frac{n+2}{2}}e^{\pi i \textup{tr}(S_1[\ell]X+iR_{W}[\ell]Y)}\right]\right].
\end{equation*}
\end{proposition}

\begin{proof}
    Due to Maass in \cite{maass} and Shimura in \cite{shimura_differential}, $R$ can be written in the form 
    \begin{equation}\label{laplacians}
        R = u(H_1, H_2)
    \end{equation}
    where $H_1, H_2$ are some ``generalised'' Laplacians and $u \in \mathbb{C}[X,Y]$ a polynomial (see also \cite[Example 3.2]{yang} and \cite[Proposition 6]{bump_choie} for an explicit description of $H_1, H_2$). 
    Therefore, because $\Theta$ is absolutely and uniformly convergent on compact subsets of $\mathbb{H}_2$, we can apply the differential operators term by term. Hence, it suffices to show that for any $\ell \in \textup{Mat}_{n+4,2}(\mathbb{Z})$ with $\textup{rank } \ell < 2$, we have
    \begin{equation*}
        R\left[\delta_{k}^{(r)} \left[(\det Y)^{\frac{n+2}{2}}e^{\pi i \textup{tr}(S_1[\ell]X+iR_{W}[\ell]Y)}\right]\right] = 0.
    \end{equation*}
    Fix now $\ell \in \textup{Mat}_{n+4,2}(\mathbb{Z})$ such that $\textup{rank } \ell < 2$. We may assume that 
    \begin{equation*}
        S_1[\ell] = \m{x&0\\0&0}  \textup{ and } R_{W}[\ell] = \m{y&0\\0&0},
    \end{equation*}
    for some $x,y \in \mathbb{R}$.
    This is true because we can find $U \in \textup{SL}_2(\mathbb{R})$ such that $\ell U = \m{a&0}$ for some $a \in \mathbb{R}^{n+4}$. But then
    \begin{equation*}
        \widetilde{U} := \m{U&0\\0&(U^{t})^{-1}} \in \textup{Sp}_{2}(\mathbb{R})    
    \end{equation*}
    and the action of this matrix on $\mathbb{H}_2$ is $Z \longmapsto UZU^{t}$. Since $R$ is $\textup{Sp}_2$-invariant and $j(\widetilde{U}, Z)=1$, $\psi_S(\det U)=1$ (so the action $Z \longmapsto \widetilde{U}\langle Z \rangle$ does not change $\delta_k^{(r)}\Theta$), we can change variables $Z \longmapsto UZU^{t}$. But $\det(UYU^t) = \det(Y)$ and
    \begin{equation*}
        \textup{tr}(S_1[\ell]UXU^{t}) = \textup{tr}(U^{t}S_1[\ell]UX) = \textup{tr}(S_1[\ell U]X),
    \end{equation*}
    so we can replace $\ell$ with $\ell U$, which will then give the form of $S_1[\ell]$ wanted. Similarly for $R_{W}[\ell]$.
    By \cite[Lemma 1.1]{satoh1986differential}, we have (we remind here that $k=-n/2$)
    \begin{equation*}
        \delta_{k}^{(r)}\left[(\det Y)^{\frac{n+2}{2}}\theta_{\ell}(Z,W)\right] = (\det Y)^{\frac{n+2}{2}}\delta_{1}^{(r)}\left[\theta_{\ell}(Z,W)\right]
    \end{equation*}
    and by \cite[Proposition 1.2, (a)]{satoh1986differential}, we have that the quantity $\delta_{1}^{(r)}\theta_{\ell}(Z,W)$ will be a $\mathbb{Z}[1/2]$-linear combination of functions of the form
    \begin{equation*}
        \det(Y)^{-b}\sigma^{c}(\det(\partial_Z))^{d}\theta_{\ell}(Z,W),
    \end{equation*}
    where $b,c,d$ are integers with $0\leq b\leq c\leq r, \ 0\leq d\leq r$ and $b+d=r$ (the operator $\sigma$ here is as in \eqref{sigma operator}). Now, in the case $\textup{rank }\ell <2$, we have that $B_{\ell}=C_{\ell}=0$ in \eqref{a_l,b_l,c_l} because of the form of $S_1[\ell]$ and $R_{W}[\ell]$. Hence, 
    \begin{equation*}
        \det(\partial_{Z})\left[\theta_{\ell}(Z,W)\right] = 0.
    \end{equation*}
    So, we only need to consider the case $b=c=r,\ d=0$. But from the proof of Lemma \ref{expression_under_delta}, we have $\sigma^{r}\theta_{\ell}(Z,W) = f_{r}(B)\theta_{\ell}(Z,W)$, with $B = \textup{tr}(T_{\ell}Y)$ and $T_{\ell}$ as in Lemma \ref{expression_under_delta}. But, since $B_\ell = C_{\ell}=0$, we have that $B$, hence $\sigma^{r}\theta_{\ell}$ depends only on $y_1$ and $\theta_{\ell}$.
Consider now the map
\begin{equation*}
    \tau : \mathcal{C}^{\infty}(\mathbb{H}_1 \times \mathbb{H}_1) \longrightarrow \mathcal{C}^{\infty}(\mathbb{H}_2),
\end{equation*}
defined by
\begin{equation*}
    \tau(h)\left(\m{Z_1 & a \\ a & Z_2}\left[\m{1&b\\0&1}\right]\right) = h(Z_1, Z_2) \textup{ } \forall a,b \in \mathbb{R}.
\end{equation*}
Now, if we write 
\begin{equation*}
    Z = \m{Z_1 & a \\ a & Z_2}\left[\m{1&b\\0&1}\right] = X+iY,
\end{equation*}
we observe that 
\begin{equation*}
    Y = \m{Y_1&Y_1b\\bY_1&b^2Y_1+Y_2},
\end{equation*}
where $Z_j=X_j+iY_j, \ j=1,2$. Hence, $\det(Y) = Y_1Y_2$ and $\sigma^{r}\theta_{\ell}(Z,W)$ depends only on $\theta_{\ell}(Z,W)$ and $Y_1$. But
\begin{equation*}
    \theta_{\ell}(Z,W) = e\left(\frac{1}{2}\left(xX_1+iyY_1\right)\right),
\end{equation*}
where $S_1[\ell] = \m{x&0\\0&0}$ and $R_{W}[\ell] = \m{y&0\\0&0}$.
Therefore,
\begin{equation*}
    \delta_{k}^{(r)}\left[(\det Y)^{\frac{n+2}{2}}\theta_{\ell}\right](Z,W)
\end{equation*}
is independent of $a,b$ and so belongs in the image of $\tau$. But by \cite[Proposition 1.1]{deitmar_krieg}, $\tau^{-1} \circ R \circ \tau$ is a simple tensor in $\mathbb{D}(\mathbb{H}_1\times \mathbb{H}_1)$, so the problem is reduced in the one-dimensional case and $S_1[\ell] = R_W[\ell] = 0$, i.e., suffices to show
\begin{equation*}
    R(1,2+n/4)[\delta_{k}^{(r)}y^{\frac{n+2}{2}}] = 0,
\end{equation*}
where now we have $\displaystyle{\delta_{k} = \frac{k}{2iy}+\frac{\partial}{\partial z}}$ in the one-dimensional case.
But we have
\begin{equation*}
    \delta_{k}^{(r)}y^{\frac{n+2}{2}} = \textup{(const)} \times y^{n/4+1}.
\end{equation*}
Also, from \cite[page 807]{bocherer}, we have that in the one-dimensional case
\begin{equation*}
    R(1,2+n/4) = 4y^2\frac{\partial}{\partial{z}}\frac{\partial}{\partial{\overline{z}}} - \frac{n}{4}\left(\frac{n}{4}+1\right)
\end{equation*}
(note that $R(m, k) = R^{\textup{B{\"o}ch}}(m, k-1)$, where $R^{\textup{B{\"o}ch}}$ is the operator defined in \cite[Theorem 2.1]{bocherer}). Therefore, the result follows.
\end{proof}
Finally, combining the two above results, we obtain the following Lemma.
\begin{lemma}\label{bound}
    Let $k=-n/2$ and $r=n/4$, as before. Given $l \in \mathbb{R}\textup{, } \epsilon >0$, and a compact subset $\mathcal{C}$ of $\mathcal{H}_S$, there exists a constant $C>0$ such that 
\begin{equation*}
    \left|R\left[\delta_{k}^{(r)}\Theta\right](Z,W)\right| \leq C(\det Y)^{l}
\end{equation*}
holds for all $W \in \mathcal{C}$ and $Z=X+iY \in \mathbb{H}_2$, with $Y \geq \epsilon 1_2$.
\end{lemma}
\begin{proof}
    From Proposition \ref{divergent_terms}, the fact that $R$ is well-behaved (see \eqref{laplacians}), \eqref{satoh expression} and the relations we have obtained in the proof of Lemma \ref{expression_under_delta}, we can write
    \begin{equation}\label{depends only on Y}
        R\left[\delta_{k}^{(r)}\Theta\right](Z,W) = (\det Y)^{\frac{n+2}{2}}\sum_{\substack{\ell \in \textup{Mat}_{n+4,2}(\mathbb{Z})\\ \textup{rank }\ell=2} }g(S_1[\ell], R_{W}[\ell], Y)\theta_{\ell}(Z,W),
    \end{equation}
    for some polynomial $g$ in the entries of $S_1[\ell], R_W[\ell]$, and $Y$. The rest of the proof now follows in the same way as in \cite[Proposition 2.1, (b)]{deitmar_krieg}.
\end{proof}
\section{Theta Correspondence}\label{theta-correspondence}
In this Section, we finally give the theta correspondence between the Klingen-type Eisenstein series of $\textup{SO}(2,n+2)$ and the Siegel-type Eisenstein series for $\textup{Sp}_2$. We start with the following definition.
\begin{defn}\label{siegel_eisenstein}
    Let $\chi=\chi_S$ denote the character of Proposition \ref{richter}. Let also
    \begin{equation*}
        P_{2,0} := \left\{\m{A&B\\C&D} \in \textup{Sp}_{2}(\mathbb{Z}) \mid C=0\right\}
    \end{equation*}
    denote the Siegel parabolic subgroup of $\textup{Sp}_2$. Notice that $P_{2,0} \cap \Gamma_0^{(2)}(q) = P_{2,0}$. For $s \in \mathbb{C}$ with $\textup{Re}(s)>3/2$, we then define the Siegel Eisenstein series with respect to $P_{2,0}$ and with character $\chi$ as
    \begin{equation}\label{symplectic eisenstein}
        \widetilde{E}(Z,\chi, s) := \sum_{\gamma \in P_{2,0} \backslash \Gamma_{0}^{(2)}(q)}\overline{\chi(\gamma)}(\det(\textup{Im}(\gamma Z)))^{s}.
    \end{equation}
\end{defn}
If $\delta=\m{A&B\\0&D} \in P_{2,0}$, we have $\det D = \pm 1$, so $\chi(\delta) = \psi(\det D) = 1$ by Proposition \ref{richter} (recall that $4 \mid n$). Therefore, the Eisenstein series is well-defined. This Eisenstein series has a meromorphic continuation to $\mathbb{C}$ (see \cite[Theorem 1, (3)]{kalinin}). Finally, using the fact that $\chi$ is a character, we have
\begin{equation*}
    \widetilde{E}(\gamma Z, \chi, s) = \chi(\gamma)\widetilde{E}(Z,\chi, s),
\end{equation*}
for all $\gamma \in \Gamma_{0}^{(2)}(q)$.
The main Theorem of the paper can now be stated as follows:
\begin{theorem}\label{main theorem}
    Let $S$ have rank $n$, with $4 \mid n$ and be such that $\#\mathcal{C}^{1}(\Gamma_S) = 1$. Let also $k=-n/2$ and $r=n/4$, as before. Define
    \begin{equation*}
        \phi_2(s) := s\left(s-\frac{1}{2}\right), \textup{ } \xi(s):=\pi^{-s/2}\Gamma(s/2)\zeta(s),
    \end{equation*}
    for $s \in \mathbb{C}$. We then have for $\hbox{Re}(s)>n+1$
    \begin{equation*}
        \left\langle \widetilde{E}(Z, \chi, (s+1)/2-r), R[\delta_{k}^{(r)}\Theta](Z,W)\right \rangle_{\Gamma_{0}^{(2)}(q)} = \xi(s)\xi(s-1)\gamma_S(s)E(W,s),
    \end{equation*}
    where $\displaystyle{\gamma_S(s) := \frac{1}{[\textup{Sp}_{2}(\mathbb{Z}):\Gamma_{0}^{(2)}(q)]}(-4)^{-r}\phi_2(s/2-2r)\phi_2(s/2)\prod_{j=1}^{r}\phi_2\left(\frac{s-(2j-1)}{2}\right)}$.
\end{theorem}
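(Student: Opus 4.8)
The plan is to establish the identity by the Rankin--Selberg unfolding method, exploiting the transformation of $\Theta$ under $\Gamma_0^{(2)}(q)$ (Proposition \ref{richter}) and the Epstein-type expression for $E(W,s)$ (Lemma \ref{maximal_rank}). Since $\delta_k^{(r)}\Theta \in \widetilde{M}_0(q,\psi_S)$ and $R$ is $\mathrm{Sp}_2(\mathbb{R})$-invariant, the function $R[\delta_k^{(r)}\Theta](\cdot,W)$ again lies in $\widetilde{M}_0(q,\psi_S)$ and hence transforms under $\Gamma_0^{(2)}(q)$ by the character $\chi$; the uniform estimate of Proposition \ref{divergent_terms} guarantees that the inner product converges for $\mathrm{Re}(s)$ large. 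First I would write out Definition \ref{inner_product_congruence}, insert the series defining $\widetilde{E}(Z,\chi,(s+1)/2-r)$, and unfold: the factor $\overline{\chi(\gamma)}$ of the Eisenstein summand cancels against the $\chi$-transformation of $\overline{R[\delta_k^{(r)}\Theta]}$, so the integral over $\Gamma_0^{(2)}(q)\backslash\mathbb{H}_2$ collapses to an integral over $P_{2,0}\backslash\mathbb{H}_2$ of $(\det Y)^{(s+1)/2-r}\,\overline{R[\delta_k^{(r)}\Theta](Z,W)}$ against the invariant measure.

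Next I would carry out the integration over the real part $X$. A fundamental domain for $P_{2,0}\backslash\mathbb{H}_2$ is a product of a compact region in $X$ (modulo integral symmetric matrices and the Levi $\mathrm{GL}_2(\mathbb{Z})$-action) with a fundamental domain for $\mathrm{GL}_2(\mathbb{Z})$ acting on the cone $\mathcal{P}_2$ of positive definite $Y$. By Proposition \ref{divergent_terms}, $R[\delta_k^{(r)}\Theta]$ is a sum over the rank-$2$ lattice points $\ell$ only, and each summand carries the oscillatory factor $e^{\pi i \,\mathrm{tr}(S_1[\ell]X)}$. Because the translations $X\mapsto X+B$ lie in $\mathrm{Sp}_2(\mathbb{R})$, the operator $R$ commutes with them and therefore preserves the $X$-Fourier modes; integrating over $X$ then annihilates every term with $S_1[\ell]\neq 0$ and leaves exactly the isotropic contributions $S_1[\ell]=0$. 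For these, $R$ reduces to its radial part $\phi^{*}(R)=D_{2-r,2}D_{2+r,2}\in\mathbb{D}(\mathcal{P}_2)$ acting on functions of $Y$ alone, and Lemma \ref{expression_under_delta} supplies the explicit shape $(\det Y)^{-r}p(\det(R_W[\ell]Y),\mathrm{tr}(R_W[\ell]Y))e^{-\pi\,\mathrm{tr}(R_W[\ell]Y)}$ of the corresponding summand of $\delta_k^{(r)}\Theta$.

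The remaining computation is the integral over $\mathcal{P}_2/\mathrm{GL}_2(\mathbb{Z})$ summed over isotropic $\ell$ of rank $2$. Here I would use that $\phi^{*}(R)$ is formally self-adjoint for the invariant measure, moving it onto the power function $(\det Y)^{(s+1)/2-r}$; since $D_{l,2}(\det Y)^{\mu}=\phi_2(\mu+1-l)(\det Y)^{\mu}$, this power function is an eigenfunction, and one reads off the factor $(-4)^{r}\phi_2(s/2-2r)\phi_2(s/2)$ (the power $(-4)^r$ originating from the normalization of $\det(\partial_Z)$ in the iterated Maass--Shimura operator). The sum over $\ell$ modulo $\mathrm{GL}_2(\mathbb{Z})$ together with the integral over $\mathcal{P}_2/\mathrm{GL}_2(\mathbb{Z})$ then unfolds to a sum over representatives $\ell$ of an integral over the full cone $\mathcal{P}_2$, and the substitution $T=R_W[\ell]^{1/2}YR_W[\ell]^{1/2}$ separates a pure power of $\det R_W[\ell]$ from an $\ell$-independent Siegel cone integral $\int_{\mathcal{P}_2}(\det T)^{\bullet}p(\det T,\mathrm{tr} T)e^{-\pi\,\mathrm{tr} T}(\det T)^{-3}\,dT$. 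Evaluating this by Siegel's cone integral (a product of Gamma functions) and reading off the contribution of $p$ produces the remaining factor $\prod_{j=1}^{r}\phi_2(s/2-2j+3/2)$, while the $\zeta$-values supplied by the $\mathrm{GL}_2$-unfolding combine with these Gamma factors into $\xi(s)\xi(s-1)$. Finally, Lemma \ref{maximal_rank} identifies the surviving Epstein sum with $\zeta(s)\zeta(s-1)E(W,s)$, which assembles the claimed identity; the meromorphic continuation of $E(W,s)$ is then inherited from that of $\widetilde{E}(Z,\chi,s)$ recorded after Definition \ref{siegel_eisenstein}.

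The main obstacle is the bookkeeping of constants in this last stage: matching the eigenvalue of $\phi^{*}(R)$, the polynomial $p$ emerging from the Maass--Shimura operator, and the Siegel cone integral so that they assemble precisely into $\xi(s)\xi(s-1)\gamma_S(s)$, while simultaneously tracking the homogeneity that converts the Eisenstein parameter $(s+1)/2-r$ into the exponent $-s/2$ of $E(W,s)$. A secondary technical point is justifying the interchange of the $\ell$-summation with the differential operators and with the unfolding, which rests on the uniform majorization of $R[\delta_k^{(r)}\Theta]$ established in Proposition \ref{divergent_terms}.
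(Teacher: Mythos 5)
Your plan reproduces the paper's proof essentially step for step: the same unfolding of $\widetilde{E}$ with the character cancellation, the same observation that the $X$-integration annihilates the non-isotropic rank-$2$ terms surviving Proposition \ref{divergent_terms}, the same reduction of $R$ to its radial part on $\mathcal{P}_2$, the same $\mathrm{GL}_2$-unfolding and substitution extracting $(\det R_W[\ell])^{-s/2}$, the same adjoint/eigenvalue transfer producing $\phi_2(s/2-2r)\phi_2(s/2)$, and Lemma \ref{maximal_rank} to identify the surviving Epstein sum with $\zeta(s)\zeta(s-1)E(W,s)$. The one step you defer to ``bookkeeping'' --- integrating the implicitly defined polynomial $p$ over the cone --- is resolved in the paper by a device worth noting: one applies $\delta_1^{(r)}$ in the auxiliary variable $T$ to Siegel's formula $\int_{\mathcal{P}_2}(\det Y)^{s/2}e^{-\pi\mathrm{tr}(TY)}\,\mathrm{d}^{*}Y=\pi^{1/2-s}\Gamma_2(s/2)(\det T)^{-s/2}$, so that by Lemma \ref{expression_under_delta} the same unknown $p$ appears under the integral sign while the right-hand side is evaluated via the Cayley identity, giving $(-4)^{r}\prod_{j=1}^{r}\phi_2(s/2-2j+3/2)$ upon setting $T=1_2$ without ever computing $p$ explicitly.
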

\begin{proof}
Let 
\begin{equation*}
    I:= [\textup{Sp}_{2}(\mathbb{Z}):\Gamma_{0}^{(2)}(q)]\left\langle \widetilde{E}(Z, \chi, (s+1)/2-r), R[\delta_{k}^{(r)}\Theta](Z,W)\right\rangle_{\Gamma_{0}^{(2)}(q)}.
\end{equation*}
First of all, this integral is well-defined because of Lemma \ref{bound}. We then have
\begin{align*}
    I &= \bigintss_{\Gamma_{0}^{(2)}(q)\backslash \mathbb{H}_2} \left[\sum_{\gamma \in P_{2,0}\backslash \Gamma_{0}^{(2)}(q)}\overline{\chi(\gamma)}(\det(\textup{Im}(\gamma Z)))^{(s+1)/2-r}\overline{R[\delta_{k}^{(r)}\Theta](Z,W)}\right]\hbox{d}^{*}Z\\
    &= \bigintss_{\Gamma_{0}^{(2)}(q)\backslash \mathbb{H}_2} \left[\sum_{\gamma \in P_{2,0}\backslash \Gamma_{0}^{(2)}(q)}\overline{\chi(\gamma)}(\det(\textup{Im}(\gamma Z)))^{(s+1)/2-r}\chi(\gamma)\overline{R[\delta_{k}^{(r)}\Theta](\gamma Z, W)}\right]\hbox{d}^{*}Z\\
    &= \int_{P_{2,0}\backslash \mathbb{H}_2} (\det Y)^{(s+1)/2-r}\overline{R[\delta_{k}^{(r)}\Theta](Z,W)}\hbox{d}^{*}Z \\&=\int_{\mathcal{C}(2,\mathbb{R})}\int_{\mathcal{R}(2,\mathbb{R})} (\det Y)^{(s+1)/2-r}\overline{R[\delta_{k}^{(r)}\Theta](Z,W)}\hbox{d}^{*}Z,
\end{align*}
where in the second equation we used the invariance up to $\chi$ of $R[\delta_k^{(r)}\Theta]$ and in the third equation we used the usual unfolding trick. Here, $C(2, \mathbb{R})+i\mathcal{R}(2,\mathbb{R})$ is a fundamental domain for the action of $P_{2,0}$ on $\mathbb{H}_2$, where $C(2, \mathbb{R})$ denotes a fundamental parralepiped of $\textup{Sym}(2,\mathbb{Z})$ in $\textup{Sym}(2, \mathbb{R})$ and $\mathcal{R}(2,\mathbb{R})$ the Minkowski reduced matrices, as in \cite[p. 29]{Krieg_Book}. In the following, we write $\textup{Sym}_2(\mathbb{R}/\mathbb{Z})$ for $\mathcal{C}(2,\mathbb{R})$. Now, from Proposition \ref{divergent_terms}, Lemma \ref{expression_under_delta} and the proof of Lemma \ref{bound}, we have
\begin{multline*}
    \int_{\textup{Sym}_{2}(\mathbb{R}/\mathbb{Z})}\overline{R[\delta_{k}^{(r)}\Theta](Z,W)}\hbox{d}X =\\ =\sum_{\substack{\ell \in \textup{Mat}_{n+4,2}(\mathbb{Z})\\\textup{rank }\ell=2, S_1[\ell]=0}} R_0^{*}\left[p\left(\det (R_{W}[\ell]Y), \textup{tr}\left(R_{W}[\ell]Y\right)\right)e^{-\pi\textup{tr}\left(R_{W}[\ell]Y\right)}\right],
\end{multline*}
where $R_{0}^{*} = (\det Y)^{2-r}\det(\partial_{Y})(\det Y)^{1+2r}\det (\partial_Y)$. The fact that only terms with $S_1[\ell]=0$ remain follows from \eqref{depends only on Y}. The rest of the expression of this integral follows from \eqref{p expression} and from the fact that $R = (\phi^{*})^{-1}\left(R_{0}^{*}(\det Y)^{-(1+r)}\right)$ (note that from Lemma \ref{expression_under_delta}, the polynomial $p$ has real coefficients). Hence, from \cite[Proposition I.4.4, I.4.5]{Krieg_Book}, the integral $I$ becomes
\begin{equation*}
    \sum_{\substack{\ell \in \textup{Mat}_{n+4,2}(\mathbb{Z})/ \textup{GL}_{2}(\mathbb{Z})\\\textup{rank}\ell=2, S_1[\ell]=0}}\int_{\mathcal{P}_{2}} (\det Y)^{s/2}R_0\left[p\left(\det (R_{W}[\ell]Y), \textup{tr}\left(R_{W}[\ell]Y\right)\right)e^{-\pi\textup{tr}\left(R_{W}[\ell]Y\right)}\right]\hbox{d}^{*}Y,
\end{equation*}
where now $R_0 = (\det Y)^{-(1+r)}R_0^{*}$ and $\hbox{d}^{*}Y = (\det Y)^{-\frac{3}{2}}\hbox{d}Y$ is the invariant measure for the action of $\textup{GL}_2$ on $\mathcal{P}_{2}$. Now, for every $\ell$ in our sum, we have that $R_{W}[\ell]$, hence $(R_{W}[\ell])^{-1}$, is positive definite. So, we can write $(R_{W}[\ell])^{-1} = AA^{t}$ with $A$ lower triangular and change variables $Y \longmapsto Y[A^{t}]$. Then $\det (R_{W}[\ell]Y)$ and $\textup{tr}\left(R_{W}[\ell]Y\right)$ become $\det Y$ and $\textup{tr }Y$ respectively and the measure remains invariant. Now, from \cite[equation (2.4)]{bocherer}, $R_0$ is $\textup{GL}_{2}$-invariant ($n=2$ and $m=2+2r$ in the notation there). Hence, the integral becomes
\begin{equation*}
    I = \sum_{\substack{\ell \in \textup{Mat}_{n+4,2}(\mathbb{Z})/ \textup{GL}_{2}(\mathbb{Z})\\\textup{rank }\ell=2, \ S_1[\ell]=0}}(\det R_{W}[\ell])^{-s/2}\int_{\mathcal{P}_2}(\det Y)^{s/2} R_0[p(\det Y, \textup{tr}Y)e^{-\pi \textup{tr}Y}]\hbox{d}^{*}Y.
\end{equation*}
Let now $M = \det Y \det \partial_{Y}$. For any $t \in \mathbb{R}$, we have for its adjoint operator $\widehat{M}$ (see \cite[page 57]{maass} for a definition) that
\begin{equation*}
    \widehat{M}[(\det Y)^{t}] = \phi_{2}(t)(\det Y)^{t},
\end{equation*}
where $\displaystyle{\phi_{2}(t) = t\left(t-\frac{1}{2}\right)}$ (see \cite[(3.3)]{bocherer}). Therefore, we have
\begin{align*}
    I &= \!\begin{multlined}[t][10.5cm]\sum_{\substack{\ell \in \textup{Mat}_{n+4,2}(\mathbb{Z})/ \textup{GL}_{2}(\mathbb{Z})\\\textup{rank }\ell=2, S_1[\ell]=0}}(\det R_{W}[\ell])^{-s/2}\times\\\times\int_{\mathcal{P}_2}(\det Y)^{s/2-2r}M\left[(\det Y)^{1+2r}\det (\partial_{Y})\left[p(\det Y, \textup{tr}Y)e^{-\pi \textup{tr}Y}\right]\right]\hbox{d}^{*}Y\end{multlined}\\
    &=\sum_{\substack{\ell \in \textup{Mat}_{n+4,2}(\mathbb{Z})/ \textup{GL}_{2}(\mathbb{Z})\\\textup{rank }\ell=2, \ S_1[\ell]=0}}(\det R_{W}[\ell])^{-s/2}\phi_{2}(s/2-2r)\int_{\mathcal{P}_2}(\det Y)^{s/2}M[p(\det Y, \textup{tr}Y)e^{-\pi \textup{tr}Y}]\hbox{d}^{*}Y\\
    &=\sum_{\substack{\ell \in \textup{Mat}_{n+4,2}(\mathbb{Z})/ \textup{GL}_{2}(\mathbb{Z})\\\textup{rank }\ell=2, \ S_1[\ell]=0}}(\det R_{W}[\ell])^{-s/2}\phi_{2}(s/2-2r)\phi_{2}(s/2)\int_{\mathcal{P}_2}(\det Y)^{s/2}p(\det Y, \textup{tr}Y)e^{-\pi \textup{tr}Y}\hbox{d}^{*}Y\\
    &=\zeta(s)\zeta(s-1)E(W,s)\phi_{2}(s/2-2r)\phi_{2}(s/2)\int_{\mathcal{P}_2}(\det Y)^{s/2}p(\det Y, \textup{tr}Y)e^{-\pi \textup{tr}Y}\hbox{d}^{*}Y,
\end{align*}
where the second and third equality follow after transferring $M$ to its adjoint. The last equality follows from Proposition \ref{main_proposition_eisenstein} and Lemma \ref{maximal_rank}. We now just need to compute the last integral. It is true that (cf. \cite[page 80, 81]{maass})
\begin{equation*}
    \int_{\mathcal{P}_2} (\det Y)^{s}e^{-\textup{tr}(TY)}\hbox{d}^{*}Y = \pi^{1/2}\Gamma_{2}(s)(\det T)^{-s},
\end{equation*}
for any $s \in \mathbb{C}$ with $\textup{Re}(s) >1/2$ and $T \in \mathcal{P}_2$. Here $\Gamma_2(s) := \Gamma(s)\Gamma\left(s-1/2\right)$. Setting now $T \longmapsto \pi T$ and then applying the operator $\delta_1^{(r)}(S,T)$, where $S$ is symmetric, so that $U:= S+iT \in \mathbb{H}_2$, we obtain from Lemma \ref{expression_under_delta} and \eqref{p expression}
\begin{multline*}
    \int_{\mathcal{P}_2} (\det Y)^{s/2}(\det T)^{-r}p(\det (YT), \textup{ tr}(YT))e^{-\pi\textup{tr}(YT)}\hbox{d}^{*}Y = \pi^{1/2-s}\Gamma_{2}(s/2)\delta_1^{(r)}[(\det T)^{-s/2}].
\end{multline*}
But for any $\alpha \in \mathbb{Z}$ and $w \in \mathbb{C}$, we have from \eqref{maass shimura operator}
\begin{align*}
    \delta_{\alpha} \left[(\det T)^{w}\right] &= (\det T)^{1/2-\alpha}\det(\partial_U)(\det T)^{\alpha-1/2+w} \\
    &= -\frac{1}{4}(\det T)^{1/2-\alpha}\det(\partial_T)(\det T)^{\alpha-1/2+w}\\
    &= -\frac{1}{4}(\det T)^{1/2-\alpha} (\alpha-1/2+w)(\alpha+w)(\det T)^{\alpha-3/2+w}\\
    &= -\frac{1}{4}\phi_2(w+\alpha)(\det T)^{w-1},
\end{align*}
because $\det (\partial_{U})B(T) = -\det (\partial_{T})B(T)/4$ for a function $B=B(T)$ depending only on $T$ and the third equality follows from a well-known formula (see \cite[Theorem 2.2]{cayley_identity} for example). Hence, by successively applying the above and using the fact that $\phi_2(s) = \phi_2(1/2-s)$ for any $s \in \mathbb{C}$, we get
\begin{equation*}
    \delta_{1}^{(r)} \left[(\det T)^{-s/2}\right] = (-4)^{-r}\prod_{j=1}^{r}\phi_2\left(\frac{s-(2j-1)}{2}\right)(\det T)^{-s/2-r}.
\end{equation*}
By evaluating at $T = 1_2$, the proof is complete.
\end{proof}
\begin{corollary}\label{poles of eisenstein}
    Assume $S$ satisfies the assumptions of Theorem \ref{main theorem}. Then, $E(W,s)$ admits a meromorphic continuation to the complex plane and 
    \begin{equation*}
        \Gamma\left(\frac{s+1}{2}-r\right)\Gamma(s-2r)L_q(s+1-2r, \chi)\zeta_q(2s-4r)\xi(s)\xi(s-1)\gamma_S(s)E(W,s)
    \end{equation*}
    has only possible simple poles at $s\in \{(n+2)/2, \ (n+4)/2\}$ if $q \neq 1$ and at $s\in \{(n-2)/2, \ n/2, \ (n+2)/2,\  (n+4)/2\}$ if $q=1$. Here, $L(s,\chi)$ denotes the usual Dirichlet $L$-function attached to $\chi$, and the subscript $_{q}$ means that we omit the Euler factors sharing prime factors with $q$.
\end{corollary}
\begin{proof}
    From \cite[Theorem 19.3]{Shimura_Euler_Product}, we have that
    \begin{equation*}
        \Gamma(s)\Gamma(2s-1)L_{q}(2s, \chi)\zeta_q(4s-2)\widetilde{E}(Z, \chi, s)
    \end{equation*}
    has a meromorphic continuation to the complex plane with possible simple poles only at $s\in \{1,3/2\}$ if $q \neq 1$ and at $s\in\{0,1/2,1,3/2\}$ if $q=1$ (see \cite[(19.3.1), (19.3.2)]{Shimura_Euler_Product} because the character $\chi$ satisfies $\chi^2=1$). Hence, the Corollary follows from Theorem \ref{main theorem}.
\end{proof}
\begin{corollary}\label{corollary}
    From Proposition \ref{integral_representation_dirichlet} and Corollary \ref{poles of eisenstein}, we obtain the meromorphic continuation of $\mathcal{D}_{F,G}(s)$ to $\mathbb{C}$, due to the one of $\widetilde{E}(Z,\chi, s)$, as we noted in the beginning of Section \ref{theta-correspondence}.
\end{corollary}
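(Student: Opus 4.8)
The plan is to read off the meromorphic continuation of $\mathcal{D}_{F,G}$ directly from the two results just established, namely the integral representation of Proposition \ref{integral_representation_dirichlet} and the theta correspondence of Theorem \ref{main theorem}. Proposition \ref{integral_representation_dirichlet} identifies, for $\textup{Re}(s)>n+2$, the Petersson inner product $\langle F(Z)E(Z,s),G(Z)\rangle$ with an explicit elementary multiple of $\mathcal{D}_{F,G}(s+k-n-1)$, while Theorem \ref{main theorem} expresses $E(W,s)$ in terms of the symplectic Eisenstein series $\widetilde{E}(Z,\chi,\cdot)$. Since the latter is known to continue meromorphically to $\mathbb{C}$, the strategy is to first propagate this continuation to $E(W,s)$, then to the inner product $\langle F(Z)E(Z,s),G(Z)\rangle$, and finally to solve for $\mathcal{D}_{F,G}$ by dividing out the gamma factor (whose reciprocal is entire).

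Concretely, I would proceed as follows. From Theorem \ref{main theorem} we have
\begin{equation*}
    E(W,s) = \bigl(\xi(s)\xi(s-1)\gamma_S(s)\bigr)^{-1}\left\langle \widetilde{E}(Z, \chi, (s+1)/2-r), R[\delta_{k}^{(r)}\Theta](Z,W)\right\rangle_{\Gamma_{0}^{(2)}(q)},
\end{equation*}
valid a priori for $\textup{Re}(s)>n+1$. Because $\widetilde{E}(Z,\chi,\cdot)$ continues meromorphically to all of $\mathbb{C}$ (with the pole structure recorded after Definition \ref{siegel_eisenstein}), the right-hand side continues meromorphically in $s$, locally uniformly in $W$; this is exactly the meromorphic continuation of $E(W,s)$ to $\mathbb{C}$ asserted at the end of Theorem \ref{main theorem}. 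Next I would insert this continued $E(Z,s)$, with $Z\in\mathcal{H}_S$, into the Petersson integral $\int_{\mathcal{Q}}F(Z)\overline{G(Z)}E(Z,s)\bigl(Q_0[\textup{Im}Z]\bigr)^{k}\,\textup{d}^{*}Z$. Since $F,G \in S_k(\Gamma_S)$ are cusp forms, the product $F\overline{G}\,\bigl(Q_0[\textup{Im}Z]\bigr)^{k}$ decays rapidly towards the cusps of $\mathcal{Q}$, while the continued $E(Z,s)$ grows at most polynomially there; hence the integral converges locally uniformly in $s$ away from the poles of $E$, and defines a meromorphic function of $s$ on $\mathbb{C}$ agreeing with $\langle F(Z)E(Z,s),G(Z)\rangle$ on $\textup{Re}(s)>n+2$.

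It then remains only to invert Proposition \ref{integral_representation_dirichlet}. As $1/\Gamma$ is entire and $(4\pi)^{s+k-n-1}$ is entire and nowhere vanishing, the identity
\begin{equation*}
    \mathcal{D}_{F,G}(s+k-n-1) = \#\hbox{SO}(S;\mathbb{Z})\,(4\pi)^{s+k-n-1}\,\Gamma(s+k-n-1)^{-1}\,\langle F(Z)E(Z,s),G(Z)\rangle
\end{equation*}
exhibits $\mathcal{D}_{F,G}(s+k-n-1)$ as meromorphic on $\mathbb{C}$; replacing $s$ by $s-k+n+1$ yields the meromorphic continuation of $\mathcal{D}_{F,G}(s)$ to $\mathbb{C}$, as claimed.

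I expect the only genuinely delicate point to be the second step: justifying that the meromorphic continuation of $E(Z,s)$ may be carried \emph{inside} the Petersson integral rather than merely pointwise in $Z$. This requires pairing the exponential decay of the cusp forms with uniform, at-most-polynomial growth bounds for the continued $E(Z,s)$ in the $Z$-aspect, which can in turn be extracted from its expression through the theta integral together with the moderate growth of $\widetilde{E}$, so as to continue the integral while preserving local uniformity in $s$. The remaining bookkeeping, namely tracking which zeros and poles of $\xi(s)\xi(s-1)\gamma_S(s)$ and of $\Gamma(s+k-n-1)$ survive in $\mathcal{D}_{F,G}$, is routine and can be deferred to the $E_8$ case, where a precise functional equation is then obtained.
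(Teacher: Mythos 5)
Your proposal is correct and follows exactly the paper's (essentially implicit) argument: propagate the known meromorphic continuation of $\widetilde{E}(Z,\chi,s)$ to $E(W,s)$ via Theorem \ref{main theorem}, then invert the integral representation of Proposition \ref{integral_representation_dirichlet}, dividing out the entire, nowhere-vanishing factors. Your additional care about carrying the continuation of $E(Z,s)$ inside the Petersson integral (uniform moderate growth against the rapid decay of the cusp forms) is a point the paper leaves entirely implicit, but it is a refinement of the same route, not a different one.
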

\begin{remark}
    The conditions of Theorem \ref{main theorem} are satisfied when $S$ corresponds to the matrices defining the quadratic forms in $4$ and $8$ variables listed in the tables of \cite[Section 8]{hanke2011enumerating}. These have one class in the genus of maximal lattices, and by looking at their Grammian matrix, if $x$ is the first standard basis vector, we have $S[x]=2$. Hence, from Remark \ref{cusps_so(2,n)}, the condition $\#\mathcal{C}^{1}(\Gamma_S)=1$ is true in this setting.
\end{remark}
\section{The $E_8$ lattice}\label{e_8-section}
As an application, we obtain a precise result regarding the functional equation of $\mathcal{D}_{F,G}(s)$ in the case when
\begin{equation*}
    S = \m{2&-1&0&0&0&0&0&0\\-1&2&-1&0&0&0&0&0\\0&-1&2&-1&0&0&0&0\\0&0&-1&2&-1&0&0&0\\0&0&0&-1&2&-1&0&-1\\0&0&0&0&-1&2&-1&0\\0&0&0&0&0&-1&2&0\\0&0&0&0&-1&0&0&2}.
\end{equation*}
This is a positive definite, even matrix with $\det S = 1$ and the lattice it corresponds to is the so-called $E_8$ lattice (cf. \cite[Example 1.2.10]{eisenstein_thesis}). This is the unique unimodular lattice with only one-dimensional cusp (\cite[Example 1.6.20]{eisenstein_thesis}). As this is unimodular, the level $q$ is $1$ and so the character $\chi_S$ of Proposition \ref{richter} is trivial. Therefore, in this case, $\Gamma_0^{(2)}(q)$ is the whole $\textup{Sp}_{2}(\mathbb{Z})$ and the symplectic Eisenstein series of \eqref{symplectic eisenstein} is
\begin{equation*}
    \widetilde{E}(Z,s) = \sum_{\gamma \in P_{2,0}\backslash \textup{Sp}_2(\mathbb{Z})} (\det \textup{Im}(\gamma Z))^{s}.
\end{equation*}
Now, if $\xi(s) = \pi^{-s/2}\Gamma(s/2)\zeta(s)$, from \cite[Theorem 19.3]{Shimura_Euler_Product} (as in Corollary \ref{poles of eisenstein}), the modified Eisenstein series
\begin{equation*}
    \widetilde{\mathcal{E}}(Z,s):=\xi(2s)\xi(4s-2)\widetilde{E}(Z,s)
\end{equation*}
has a meromorphic continuation to $\mathbb{C}$ with possible simple poles only at $s \in \{0, 1/2, 1, 3/2\}$, and from \cite[Theorem 2]{kalinin} satisfies the functional equation
\begin{equation*}
    \widetilde{\mathcal{E}}\left(Z,3/2-s\right) = \widetilde{\mathcal{E}}(Z,s).
\end{equation*}
Now, for $W \in \mathcal{H}_S$, let
\begin{equation*}
    E^{*}(W,s):=\xi(s-3)\xi(2s-8)\xi(s)\xi(s-1)\gamma_S(s)E(W,s).
\end{equation*}
From our main Theorem \ref{main theorem}, we have in this case
\begin{equation*}
    \left\langle \widetilde{E}(Z, (s-3)/2), R[\delta_{-4}^{(2)}\Theta](Z,W)\right \rangle= \xi(s)\xi(s-1)\gamma_S(s)E(W,s).
\end{equation*}
Hence, we obtain that $E^{*}(W,s)$ has a meromorphic continuation to $\mathbb{C}$ and is invariant under $s \longmapsto 9-s$. Now, from Proposition \ref{integral_representation_dirichlet}, we have
\begin{equation*}
    (4\pi)^{-s}\Gamma(s)\mathcal{D}_{F,G}(s) = \#\textup{SO}(S;\mathbb{Z})\cdot \langle F(W)\cdot E(W,s-k+9)\textup{, } G(W)\rangle.
\end{equation*}
Hence, if we define
\begin{equation*}
    \mathcal{D}_{F,G}^{*}(s) := (4\pi)^{-s}\Gamma(s)\xi(s-k+6)\xi(2s-2k+10)\xi(s-k+9)\xi(s-k+8)\gamma_S(s-k+9)\mathcal{D}_{F,G}(s),
\end{equation*}
we have
\begin{equation*}
    \mathcal{D}_{F,G}^{*}(s) = \#\textup{SO}(S;\mathbb{Z}) \cdot \langle F(W)\cdot E^{*}(W,s-k+9)\textup{, } G(W)\rangle.
\end{equation*}
\\Therefore, we arrive at the following Theorem.
\begin{theorem}\label{e_8}
    Let $S$ be as above, corresponding to the $E_8$ lattice. With the notation as above, $\mathcal{D}_{F,G}^{*}(s)$ has a meromorphic continuation to $\mathbb{C}$ and is invariant under $s \longmapsto 2k-9-s$.
\end{theorem}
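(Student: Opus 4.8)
The plan is to propagate the known analytic properties of the completed symplectic Eisenstein series down two levels: first to $E^*(W,s)$ and then, through the integral representation, to $D_{F,G}^*(s)$. Throughout I specialise Theorem \ref{main theorem} to the $E_8$ data $n=8$, $k=-4$, $r=2$. The decisive feature of $E_8$ is that it is unimodular, so the level is $q=1$, the character $\chi$ of Proposition \ref{richter} is trivial, and $\Gamma_0^{(2)}(1)=\mathrm{Sp}_2(\mathbb{Z})$; this is exactly what makes the full functional equation of Kalinin available for the completed series $\widetilde{\mathcal{E}}(Z,w)=\xi(2w)\xi(4w-2)\widetilde{E}(Z,w)$, namely $\widetilde{\mathcal{E}}(Z,3/2-w)=\widetilde{\mathcal{E}}(Z,w)$ together with meromorphic continuation.

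First I would establish the continuation and symmetry of $E^*$. Substituting $w=(s-3)/2$ gives $\xi(2w)=\xi(s-3)$ and $\xi(4w-2)=\xi(2s-8)$, and since these factors are independent of $Z$ they pull out of the $\Gamma_0^{(2)}(q)$-inner product. Pairing $\widetilde{\mathcal{E}}(Z,(s-3)/2)$ against $R[\delta_{-4}^{(2)}\Theta](Z,W)$ and invoking Theorem \ref{main theorem} then yields
\[
\left\langle \widetilde{\mathcal{E}}(Z,(s-3)/2),\, R[\delta_{-4}^{(2)}\Theta](Z,W)\right\rangle = E^*(W,s),
\]
because the two new factors $\xi(s-3)\xi(2s-8)$ complete the $\xi(s)\xi(s-1)\gamma_S(s)$ already present on the right-hand side. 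The pairing converges for all $s$ off the poles of $\widetilde{\mathcal{E}}$ thanks to the rapid-decay bound on $R[\delta_{-4}^{(2)}\Theta]$ from the second part of Proposition \ref{divergent_terms}, so the meromorphic continuation of $\widetilde{\mathcal{E}}$ transfers directly to $E^*$. For the symmetry I would check that $s\mapsto 9-s$ sends $w=(s-3)/2$ to $3/2-w$, whence $\widetilde{\mathcal{E}}(Z,3/2-w)=\widetilde{\mathcal{E}}(Z,w)$ becomes $E^*(W,9-s)=E^*(W,s)$.

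Finally I would feed $E^*$ into Proposition \ref{integral_representation_dirichlet}. Specialising it to $n=8$ and replacing $s$ by $s-k+9$ gives $(4\pi)^{-s}\Gamma(s)D_{F,G}(s)=\#\mathrm{SO}(S;\mathbb{Z})\,\langle F(Z)E(Z,s-k+9),G(Z)\rangle$; multiplying $E$ by its completing factors evaluated at $w=s-k+9$ turns this into $D_{F,G}^*(s)=\#\mathrm{SO}(S;\mathbb{Z})\,\langle F(Z)E^*(Z,s-k+9),G(Z)\rangle$. Since this Petersson pairing depends on $s$ only through $E^*$, both properties pass through: the continuation is immediate, and as $s\mapsto 2k-9-s$ sends $w=s-k+9$ to $k-s=9-w$, the identity $E^*(W,9-w)=E^*(W,w)$ yields $D_{F,G}^*(2k-9-s)=D_{F,G}^*(s)$. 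The only genuinely delicate point is the bookkeeping: one must track the chain of shifts so that the four $\xi$-factors, the factor $\gamma_S$, and the two reflections $w\mapsto 3/2-w$ and $s\mapsto 2k-9-s$ line up exactly; everything else is a formal consequence of the identities already in hand.
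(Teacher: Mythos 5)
Your proposal is correct and follows essentially the same route as the paper: specialise the main theorem to $n=8$, $k=-4$, $r=2$, pair the Kalinin-completed Eisenstein series $\widetilde{\mathcal{E}}(Z,(s-3)/2)$ against $R[\delta_{-4}^{(2)}\Theta]$ to transfer continuation and the symmetry $s\mapsto 9-s$ to $E^*(W,s)$, then push both through the integral representation with the shift $s\mapsto s-k+9$ to get the symmetry $s\mapsto 2k-9-s$ for $D_{F,G}^*$. All substitutions and $\xi$-factor bookkeeping match the paper exactly, and your explicit convergence remark via Proposition \ref{divergent_terms} is a point the paper leaves implicit.
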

\textbf{Data Availability}: Data sharing not applicable to this article as no datasets were generated or analysed during the current study.
\vspace{-0.1cm}
\printbibliography
\end{document}